\newtheorem{thm}{Theorem}[subsection]
\newtheorem{prop}[thm]{Proposition}
\newtheorem{lem}[thm]{Lemma}
\newtheorem{mydef}[thm]{Definition}
\theoremstyle{remark}
\newtheorem{rem}[thm]{Remark}
\newtheorem{notn}[thm]{Notation}
\newtheorem{exm}[thm]{Example}
\newtheorem{clm}[thm]{Claim}
\newtheorem{proc}[thm]{Procedure}
\newtheorem{conv}[thm]{Convention}
\title{Characterization of ${\cal B}(\infty)$ using marginally large tableaux and rigged configurations in the $A_n$ case via integer sequences}
\author{Roger Tian}
\date{\today}
\begin{document}
\maketitle


\begin{abstract}
Rigged configurations are combinatorial objects prominent in the study of solvable lattice models. Marginally large tableaux are semi-standard Young tableaux of special form that give a certain realization
of the crystals ${\cal B}(\infty)$. We introduce cascading sequences to characterize marginally large tableaux. Then we use cascading sequences and a known non-explicit crystal isomorphism
between marginally large tableaux and rigged configurations to give a characterization of the latter set, and to give an explicit bijection between the two sets.
\end{abstract}

\tableofcontents

\section{Introduction}
Kashiwara introduced the crystal ${\cal B}(\infty)$, which is the crystal base of the negative part $U_q^-(\mathfrak{g})$ of a quantum group, in \cite{kashi} and used it to study the Demazure crystals that were conjectured by Littelmann \cite{littel}. As ${\cal B}(\infty)$ reveals much about the structure of the quantum group $U_q(\mathfrak{g})$ itself, it is an active topic of research. By the work of Hong and Lee \cite{honglee}, ${\cal B}(\infty)$ can be realized as crystals consisting of combinatorial objects called marginally large tableaux, which are a special class of semi-standard Young tableaux.

Schilling \cite{schil} gave an explicit $U_q(\mathfrak{g})$-crystal structure to combinatorial objects called rigged configurations, which naturally serve as indexes for the eigenvalues and eigenvectors of the Hamiltonian in the Bethe Ansatz. A crystal model for ${\cal B}(\infty)$ in terms of rigged configurations was given \cite{sascrim} by Salisbury and Scrimshaw for affine simply-laced types, who also established \cite{sascrim2} an isomorphism between rigged configurations and marginally large tableaux as crystals. However, this isomorphism is not explicit, and the ${\cal B}(\infty)$ rigged configurations have not yet been explicitly characterized at the writing of \cite{hongleetian}.

The purpose of this paper is to characterize the rigged configurations of the $A_n$ type in ${\cal B}(\infty)$ and to give an explicit bijection between marginally large tableaux and ${\cal B}(\infty)$ rigged configurations of the $A_n$ type. We will achieve this by introducing special integer sequences that will be called cascading sequences. Any element of a highest weight crystal is obtained by acting on the highest weight vector via a sequence of Kashiwara operators, though this sequence is not necessarily unique. A cascading sequence can be viewed as the ``canonical'' sequence of Kashiwara operators leading to any crystal from the highest weight crystal. The desired bijection will be obtained by first establishing a bijection between the marginally large tableaux and the cascading sequences, and then establishing a bijection between the cascading sequences and the rigged configurations.

This paper is organized as follows. In Subsection \ref{mlt}, we recall the definition of marginally large tableaux. In Subsection \ref{cs}, we introduce cascading sequences and use them to characterize marginally large tableaux. In Subsection \ref{lanesdef}, we introduce an aspect of cascading sequences called lanes that will later be used in the characterization of rigged configurations. In Subsection \ref{riggedconf}, we recall the definition of ${\cal B}(\infty)$ rigged configurations in the $A_n$ case. In Subsection \ref{cs action}, we show in Lemma \ref{snake} that Kashiwara operators for rigged configurations act nicely when arranged in a cascading sequence, which allows us to obtain an interesting structural property Theorem \ref{iden rig} of rigged configurations. In Subsection \ref{riggedlanes}, we show that lanes of a cascading sequence correspond to columns of rigged partitions in the corresponding rigged configuration, and we obtain the first half of the characterization of rigged configurations Theorem \ref{constraints}. In Subsection \ref{roughidea}, we give the rough idea of our growth algorithm for characterizing rigged configurations. In Subsection \ref{plateaus}, we introduce special cascading sequences called $p$-plateaus that will be used in the growth algorithm. In Subsection \ref{addbox}, we show how to modify a cascading sequence to achieve the effect of adding boxes to a rigged partition in the corresponding rigged configuration. In Subsection \ref{growth}, we give our growth algorithm Theorem \ref{growth alg thm} for characterizing rigged configurations. In Subsection \ref{cs from rigged}, we give in Theorem \ref{get seq} an algorithm for obtaining the cascading sequence of any rigged configuration.

\section*{Acknowledgments}
The author would like to thank Anne Schilling for helpful discussions.

The author would like to thank Jin Hong, Hyeonmi Lee, and Travis Scrimshaw for reviewing parts of this paper, which began as part of a joint project with Hyeonmi Lee. However, Jin Hong and Hyenomi Lee posted a paper on the same topic to the arXiv in April, without giving due credit to RT and to strong protest from RT, even though HL had received considerable documented mathematical input from RT. This resulted in a joint paper \cite{hongleetian} (version 2) between JH, HL and RT; some form of the current writing was part of this joint paper. However, JH and HL decided unilaterally to terminate this joint project recently. Lastly, JH and HL left comments on their arXiv posting suggesting that they terminated the joint project due to serious errors and logical gaps that they uncovered in RT's paper, even though they were unable to present an example of such a serious issue when confronted on this matter by RT. This compels RT to include this description of the dispute.

The author used Sage (\cite{Sage-Combinat} and \cite{sage}) extensively to do computations with marginally large tableaux and rigged configurations that would have been forbidding by hand.

The author was partially supported by NSF grants DMS-1001256, OCI-1147247, DMS-1500050.

\section{Marginally Large Tableaux and Cascading Sequences}
\label{mlt cs}
We give a bijection between the set of $A_n$ marginally large tableaux and a special set of integer sequences that we call cascading $n$-sequences.

\subsection{Marginally Large Tableaux}
\label{mlt}
In this subsection, we recall the definition of marginally large tableaux for type $A_n$, as given in \cite{honglee}.
\begin{mydef}
We call a semi-standard tableau $T$ \textbf{large} if it has exactly $n$ nonempty row, and the $i$th row has strictly more $i$-boxes than the total number of boxes in the $(i+1)$st row, for each $1 \leq i < n$.
\end{mydef}

\begin{mydef}
By a \textbf{marginally large tableau} in the $A_n$ case we will mean a Young tableau with exactly $n$ rows whose entries come from the alphabet $J = \{1 < 2 < \cdots < n < n+1\}$ that satisfies the following conditions:
\begin{enumerate}
\item The $i$th row of the leftmost column is a single $i$-box, for each $1 \leq i \leq n$.
\item Entries increase weakly as we go from left to right along each row.
\item The number of $i$-boxes in the $i$th row exceeds by exactly one the total number of boxes in the $(i+1)$st row, for each $1 \leq i < n$. 
\end{enumerate}
\end{mydef}

Let $T(\infty)$ denote the set of $A_n$ marginally large tableaux. As shown in \cite{honglee}, $T(\infty)$ has a \textit{crystal structure}, given as follows.

\begin{proc}
We describe how to apply the Kashiwara operator $f_i$ to any marginally large tableau:
\begin{enumerate}
\item Apply $f_i$ to this tableau in the usual way, by writing the tableau as a tensor product, applying the tensor product rule, and assembling the result back into tableau form.
\item We are done if the result we obtain is a large tableau, as it will be marginally large automatically.
\item If the result we obtain is not a large tableau, then $f_i$ must have acted on the rightmost $i$-box of the $i$th row. Insert a single column of height $i$ to the left of this box that $f_i$ acted upon. For $1 \leq k \leq i$, the $k$th row of the added column must be a $k$-box.
\end{enumerate}
\end{proc}

\begin{exm}
Given the marginally large tableau \[S = \young(1111,222,34),\] we have $$f_2(S) = \young(11111,2223,34).$$
\end{exm}

\begin{proc}
We describe how to apply the Kashiwara operator $e_i$ to any marginally large tableau:
\begin{enumerate}
\item Apply $e_i$ to this tableau in the usual way.
\item We are done if the result we obtain is zero or a marginally large tableau.
\item Otherwise, the result is a large tableau that is not marginally large. $e_i$ must have acted on the box to the right of the rightmost $i$-box of the $i$th row. Remove the column that contains this changed box. This column will have height $i$, and its $k$th row consists of a single $k$-box, for $1 \leq k \leq i$.
\end{enumerate}
\end{proc}

\begin{exm}
Given the marginally large tableau \[S = \young(1111,222,34),\] we have \[e_3(S) = \young(111,22,3).\]
\end{exm}

\subsection{Cascading Sequences and a Bijection}
\label{cs}
For any $m \in [n] = \{1,2,\ldots,n\}$, we will call any subinterval $[a,m] = \{a,a+1,\ldots,m\}$ of $[n]$ an \textbf{$m$-lower subinterval}. For example, $[3,5]$ is a $5$-lower subinterval of $[6]$. By an \textbf{$m$-component}, we will mean a sequence of finitely many (allowed to be zero) $m$-lower subintervals of $[n]$ ordered by nonincreasing length.
\begin{mydef}
By a \textbf{cascading $n$-sequence} we will mean an integer sequence formed by concatenating an $n$-component, an $(n-1)$-component, an $(n-2)$-component, \ldots, in that order. Let $\bar{A}_n$ denote the set of cascading $n$-sequences.
\end{mydef}

\begin{exm}
\label{exm basic}
$(1,2,3,4,5,3,4,5,3,4,5,5,2,3,4,3,4,2,3,3,2,2,1)$ is an element of $\bar{A}_5$ where the lower subintervals (written as tuples) are $(1,2,3,4,5)$, $(3,4,5)$, $(3,4,5),$ $(5),$ $(2,3,4),$ $(3,4),$ $(2,3),$ $(3),$ $(2),$ $(2),$ $(1)$.
\end{exm}

We will follow the English notation for the Young tableau, with weakly increasing row length as we move up the tableau. Let $M^{A_n}$ denote the set of marginally large tableaux (MLT) in the $A_n$ case. We now define a map $\Phi : M^{A_n} \rightarrow \bar{A}_n$ which will be shown to be a bijection. Given a marginally large tableau $T$, we will give an $f$-string (sequence of Kashiwara operators $f_1,f_2,\ldots,f_n$; also called Lusztig data \cite{lusz}) with nice properties that gives rise to $T$ upon acting on the highest weight MLT. We will write this $f$-string as its corresponding sequence of indices, and we will see that this sequence is an element of $\bar{A}_n$. Let $T(i)$ denote the portion of the $i$th row of $T$ without the $i$ boxes. 

Define $\Phi(T)$ as follows. The $f$-string that we give will add the $(n+1)$-boxes, the $n$-boxes, the $(n-1)$-boxes, and so on in that order. Let $x_{i,j}$ denote the number of $(j+1)$-boxes in the $i$th row of $T$. The $n$-component of $\Phi(T)$ consists of $x_{i,n}$ copies of $(i,i+1,\ldots,n)$ for $i = 1,2, \ldots$. In general, the $m$-component of $\Phi(T)$ consists of $x_{i,m}$ copies of $(i,i+1,\ldots,m)$ for $i = 1,2, \ldots$; each copy of $(i,i+1,\ldots,m)$ adds an $(m+1)$-box to the $i$th row.

\begin{rem}
The cascading sequences can actually be rewritten as Berenstein-Zelevinsky-Lusztig data (in \cite{beze}, \cite{beze2}) for the reduced word $$(s_1)(s_2s_1)\ldots(s_n \ldots s_2s_1).$$ However, to the best of our knowledge, the use of such sequences for the purpose of characterization given in this paper is new.
\end{rem}

\begin{exm}
\label{test}
Given \[T = \young(11111111111111112223336,222222222222222,33333334455666,444445,5666)\] in $M^{A_5}$, we have \[\Phi(T) = (1,2,3,4,5,3,4,5,3,4,5,3,4,5,5,5,5,3,4,3,4,4,3,3,1,2,1,2,1,2,1,1,1)\] where the lower subintervals $(1,2,3,4,5),(3,4,5),(3,4,5),(3,4,5),(5),(5),(5)$ add all the $6$-boxes of $T$, the lower subintervals $(3,4),(3,4),(4)$ add all the $5$-boxes of $T$, the lower subintervals $(3),(3)$ add all the $4$-boxes of $T$, the lower subintervals $(1,2),(1,2),(1,2)$ add all the $3$-boxes of $T$, and the lower subintervals $(1),(1),(1)$ add all the $2$-boxes of $T$.
\end{exm}

\begin{prop}
\label{f-string thm}
The map $\Phi : M^{A_n} \rightarrow \bar{A}_n$ defined above is a bijection.
\end{prop}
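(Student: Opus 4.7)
The plan is to establish the bijection by writing down an explicit combinatorial inverse $\Psi:\bar{A}_n\to M^{A_n}$. The crucial observation is that a cascading $n$-sequence is determined by the nonnegative integers $x_{i,m}$ for $1\le i\le m\le n$ that count the multiplicity with which the $m$-lower subinterval $[i,m]$ appears in the $m$-component: since the subintervals inside each $m$-component are ordered by nonincreasing length, and there is exactly one $m$-lower subinterval of each length $m-i+1$, the data $\{x_{i,m}\}$ and the sequence recover each other. On the MLT side, $\Phi$ is defined so that $x_{i,m}(T)$ equals the number of $(m+1)$-boxes in row $i$ of $T$. Thus both $M^{A_n}$ and $\bar{A}_n$ sit in natural correspondence with nonnegative integer arrays $\{x_{i,m}\}_{1\le i\le m\le n}$, and the proposition reduces to checking that every such array is realized by a unique MLT.

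Given an array $\{x_{i,m}\}$ (equivalently, a cascading sequence $\sigma$), I build $\Psi(\sigma)$ from the bottom up. Row $n$ consists of one $n$-box followed by $x_{n,n}$ boxes labeled $n+1$. For $i=n-1,n-2,\ldots,1$, row $i$ consists of $L_{i+1}+1$ boxes labeled $i$ followed by $x_{i,i}$ boxes labeled $i+1$, then $x_{i,i+1}$ boxes labeled $i+2$, and so on up to $x_{i,n}$ boxes labeled $n+1$, where $L_{i+1}$ is the length of row $i+1$. The three MLT conditions are then immediate from the construction, and the only semistandardness requirement left to check is column strictness. But the leftmost $L_{i+1}+1$ entries of row $i$ are all equal to $i$, strictly more than the length $L_{i+1}$ of row $i+1$, so in every column that row $i+1$ occupies, row $i$ contains an $i$-box lying strictly below an entry $\ge i+1$. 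Hence $\Psi(\sigma)\in M^{A_n}$.

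The two identities $\Psi\circ\Phi=\mathrm{id}_{M^{A_n}}$ and $\Phi\circ\Psi=\mathrm{id}_{\bar{A}_n}$ are then direct checks. For the first, $\Phi(T)$ stores the counts $x_{i,m}(T)$, and $\Psi$ reassembles a tableau with identical $(m+1)$-box counts in each row, while the number of $i$-boxes in row $i$ is forced by condition (3) to agree with $T$. For the second, $\Psi(\sigma)$ is built so that its $x_{i,m}$ values are the prescribed multiplicities of $[i,m]$ in $\sigma$, and these multiplicities uniquely recover $\sigma$'s $m$-components in the required order.

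I do not expect any serious obstacle; the only step with genuine content is the column-strictness argument for $\Psi(\sigma)$, which succeeds precisely because the marginally-large condition (3) furnishes the one box of slack required to dominate the row below. For a more crystal-theoretic argument one could instead verify the companion assertion in the text, namely that the $f$-string read left to right from $\Phi(T)$ sends the highest-weight MLT to $T$, by inducting on the cascading order (first the $n$-component, then the $(n-1)$-component, and so on) and tracking how a single block $f_i f_{i+1}\cdots f_m$ applied consecutively deposits exactly one $(m+1)$-box in row $i$ while preserving marginal largeness of the intermediate tableaux; but this stronger claim is not required for the bijection and only re-derives it from the crystal side.
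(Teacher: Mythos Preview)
Your proof is correct and follows essentially the same approach as the paper: both construct the inverse by reading off the multiplicities $x_{i,m}$ of the lower subintervals $[i,m]$ and reassembling the tableau row by row from this data. Your version is more thorough than the paper's, which simply asserts that each subinterval $[i,m]$ specifies an $(m+1)$-box in row $i$ without explicitly verifying column-strictness or the two composition identities; your extra care in checking that $\Psi(\sigma)$ is actually a marginally large tableau (via the one-box slack in condition (3)) fills a small gap the paper leaves implicit.
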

\begin{rem}
Thus, we can take the cascading $n$-sequences to be ``canonical'' $f$-strings for $M^{A_n}$.
\end{rem}
\begin{proof}
The inverse map $\Phi^{-1}$ can be described as follows. Given an $f$-string $\alpha \in \bar{A}_n$, we can read off all its lower subintervals in left-right order. Each such lower subinterval $[i,m]$ gives an $(m+1)$-box in the $i$th row of the MLT resulting from $\alpha$ acting on the highest weight element. Thus, each such lower subinterval $[i,m]$ specifies that there must be an $(m+1)$-box in the $i$th row of $\Phi^{-1}(\alpha)$. In this way, the MLT $\Phi^{-1}(\alpha)$ is completely determined, since $\Phi^{-1}(\alpha)(i)$ is completely determined for each row $i$.
\end{proof}

\begin{rem}
Notice that the elements $\alpha$ of $\overline{A}_n$ are particularly convenient as $f$-strings for MLT's, as we can obtain the corresponding MLT $\Phi^{-1}(\alpha)$ (which is the same MLT obtained by having $\alpha$ act on the highest weight element) by simply reading off the lower subintervals of $\alpha$, \textit{without} having to apply the Kashiwara operators on the highest weight element. For instance, we see in Example \ref{test} that we can immediately obtain $T$ from the $f$-string by noting that $T$ has exactly one $6$-box in the first row specified by the lower subinterval $(1,2,3,4,5)$, exactly two $5$-boxes in the third row specified by the lower subintervals $(3,4),(3,4)$, and so on.
\end{rem}

Finally, we mention that the cascading sequence characterization in this section can also be applied to regular Young tableaux, with slight modification.

\subsection{Lanes of Cascading Sequences}
\label{lanesdef}
As already shown in \cite{sascrim}, the marginally large tableaux are isomorphic to the rigged configurations as crystals, so we can use cascading sequences to characterize the latter objects (which are in bijection with cascading sequences), which have not yet been characterized explicitly.

Given two tuples $u = (u_1,\ldots,u_i), v = (v_1,\ldots,v_j)$ we define $$u \oplus v = (u_1,\ldots,u_i,v_1,\ldots,v_j).$$ If $u, v$ are lower subintervals, we define their intersection $u \cap v$ in the natural way. For example, we have $(3,5,2) \oplus (5) = (3,5,2,5)$ and we have $(7,8,9) \cap (6,7,8) = (7,8)$.

We first introduce the aspects of cascading $n$-sequences that will be useful in describing $A_n$ rigged configurations. Let $\alpha \in \overline{A}_n$ be a cascading $n$-sequence.

For the remainder of this subsection, we partition $\alpha$ into subsequences that we will call \textbf{lanes}. As subsequences, lanes will be written as tuples. For any tuple, the first entry will be called the \textbf{head} of the tuple and the last entry will be called the \textbf{tail} of the tuple. Also, for any lane $L$ of $\alpha$, let $|L|$ denote the length of $L$. Formation of these lanes will reflect the way Kashiwara operators act on rigged configurations in Lemma \ref{snake}. Furthermore, we will show that the $i$th $l$-lane corresponds to the $i$th column of the $l$th partition in the corresponding rigged configuration. Label the lower subintervals of $\alpha$ as $I_1, I_2, \ldots, I_P$ from left to right. Denote by $I_i(j)$ the $j$th entry of $I_i$, and by $\overline{I}_i(j)$ the integer value (in $[n]$) of $I_i(j)$; in Example \ref{exm basic}, $I_5 = (2,3,4)$ and $\overline{I}_5(3) = 4$. Lanes will be formed, via the following iterative procedure, for each integer in $\alpha$; i.e. for $m \in [n]$ there will be lanes $L_1(m), L_2(m)$, \ldots at the end of the procedure. The \textbf{lane forming procedure} builds the lanes in stages, as follows:

At the outset, we form lanes using entries of $I_1$, by setting $L_1(\overline{I}_1(j)) := (I_1(j))$ for each $j$. In general, suppose a collection of lanes $M^1, M^2, \ldots, M^a$ has been formed from the lower subintervals $I_1, I_2, \ldots, I_{b-1}$. Set $L_p(q) := \emptyset$ for any $L_p(q) \notin \{M^1, M^2, \ldots, M^a\}$. We will form new lanes using entries of $I_b$. First, pick the maximal $d_1$ such that $L_{d_1}(\overline{I}_b(1)) \in \{M^1, M^2, \ldots, M^a\}$, and set $L_{d_1+1}(\overline{I}_b(1)) := (I_b(1))$; if no such $d_1$ exists, take $d_1 = 0$. In general, for any entry $I_b(k)$ with $k > 1$, pick the maximal $d_{k} \leq d_{k-1}$ such that $|L_{d_k}(\overline{I}_b(k))| > |L_{d_{k}+1}(\overline{I}_b(k))|$, and set $L_{d_k+1}(\overline{I}_b(k)) := L_{d_k+1}(\overline{I}_b(k)) \oplus (I_b(k))$; take $d_k = 0$ if no such $d_k$ exists. Finally, we fix all other preexisting lanes. At the end of this iterative procedure, we obtain the lanes partitioning $\alpha$.

\begin{exm}
\label{lanes1}
Consider the cascading $10$-sequence $$(8,9,10,8,9,10,7,8,9,7,8,9,7,8,9,8,9,6,7,8,7,8),$$ whose lower subintervals are $I_1 = (8,9,10), I_2 = (8,9,10), I_3 = (7,8,9), I_4 = (7,8,9),$ $I_5 = (7,8,9), I_6 = (8,9), I_7 = (6,7,8), I_8 = (7,8)$. The lanes are formed in the following processes (with exactly one entry added to the lane at each stage):
\begin{enumerate}
\item $L_1(8): (I_1(1)) \rightarrow (I_1(1),I_3(2)) \rightarrow (I_1(1),I_3(2),I_7(3))$
\item $L_1(9): (I_1(2)) \rightarrow (I_1(2),I_3(3))$
\item $L_1(10): (I_1(3))$
\item $L_2(8): (I_2(1)) \rightarrow (I_2(1),I_4(2))$
\item $L_2(9): (I_2(2)) \rightarrow (I_2(2),I_4(3))$
\item $L_2(10) : (I_2(3))$
\item $L_1(7): (I_3(1)) \rightarrow (I_3(1),I_7(2))$
\item $L_2(7): (I_4(1))$
\item $L_3(7): (I_5(1))$
\item $L_3(8): (I_5(2))$
\item $L_3(9): (I_5(3))$
\item $L_4(8): (I_6(1)) \rightarrow (I_6(1),I_8(2))$
\item $L_4(9): (I_6(2))$
\item $L_1(6): (I_7(1))$
\item $L_4(7): (I_8(1))$
\end{enumerate}
Written another way, the lower subintervals and lanes are $I_1 = (8^1,9^1,10^1),$ $I_2 = (8^2,9^2,10^2),$ $I_3 = (7^1,8^1,9^1),$ $I_4 = (7^2,8^2,9^2),$ $I_5 = (7^3,8^3,9^3),$ $I_6 = (8^4,9^4),$ $I_7 = (6^1,7^1,8^1),$ $I_8 = (7^4,8^4)$, where lane $i$ has been marked with a superscript $i$.
\end{exm}

\begin{exm}
\label{lanes2}
Let us now look at a more complex example. The cascading $10$-sequence 
\begin{multline*}
(6,7,8,9,10,7,8,9,10,7,8,9,10,8,9,10,6,7,8,9,6,7,8,9,7,8,9,5,6,7,8,5,6,7,8, \\ 5,6,7,8,6,7,8)
\end{multline*} 
has lower subintervals and lanes $I_1 = (6^1,7^1,8^1,9^1,10^1),$ $I_2 = (7^2,8^2,9^2,10^2),$ $I_3 = (7^3,8^3,9^3,$ $10^3),$ $I_4 = (8^4,9^4,10^4),$ $I_5 = (6^2,7^1,8^1,9^1),$ $I_6 = (6^3,7^2,8^2,9^2),$ $I_7 = (7^4,8^3,9^3),$ $I_8 = (5^1,6^1,7^1,$ $8^1),$ $I_9 = (5^2,6^2,7^2,8^2),$ $I_{10} = (5^3,6^3,7^3,8^3),$ $I_{11} = (6^4,7^4,8^4)$, where lane $i$ has been marked with a superscript $i$. \\

We now show with more detail the formation of lanes at the stage where $I_6$ is acting. \\
The lanes formed before $I_6$ are: \\
$L_1(6) = (I_1(1))$\\
$L_1(7) = (I_1(2), I_5(2))$\\
$L_1(8) = (I_1(3), I_5(3))$\\
$L_1(9) = (I_1(4), I_5(4))$\\
$L_1(10) = (I_1(5))$\\
$L_2(6) = (I_5(1))$\\
$L_2(7) = (I_2(1))$\\
$L_2(8) = (I_2(2))$\\
$L_2(9) = (I_2(3))$\\
$L_2(10) = (I_2(4))$\\
$L_3(7) = (I_3(1))$\\
$L_3(8) = (I_3(2))$\\
$L_3(9) = (I_3(3))$\\
$L_3(10) = (I_3(4))$\\
$L_4(8) = (I_4(2))$\\
$L_4(9) = (I_4(3))$\\
$L_4(10) = (I_4(4))$\\
Lastly, we have $L_3(6) = L_3(7) = L_3(8) = L_3(9) = L_4(7) = L_1(5) = L_2(5) = L_3(5) = L_4(6) = ()$.\\
Assignment of entries of $I_6$:\\
We have $L_3(6) := () \oplus (I_6(1)) = (I_6(1))$, since $d_1 = 2$ for integer value 6.\\
We have $L_2(7) := (I_2(1)) \oplus (I_6(2)) = (I_2(1), I_6(2))$, since $d_2 = 1$ for integer value 7.\\
We have $L_2(8) := (I_2(2)) \oplus (I_6(3)) = (I_2(2), I_6(3))$, since $d_3 = 1$ for integer value 8.\\
We have $L_2(9) := (I_2(3)) \oplus (I_6(4)) = (I_2(3), I_6(4))$, since $d_4 = 1$ for integer value 9.
\end{exm}

\section{Cascading Sequences and Rigged Configurations}
\label{cs riggedconf}
We use cascading sequences to give an explicit characterization (with a growth algorithm) of ${\cal B}(\infty)$ rigged configurations in the $A_n$ case, and we give an explicit bijection between these rigged configurations and cascading sequences. This results in an explicit bijection between the marginally large tableaux and $A_n$ rigged configurations.

\subsection{Rigged Configurations}
\label{riggedconf}
The definition of ${\cal B}(\infty)$ rigged configurations in the $A_n$ case is given in \cite{sascrim}, based on work done in \cite{schil}. We now recall this definition. Let $\textfrak{g}$ be a simply-laced Kac-Moody algebra with index set $I$, and let ${\cal H} := I \times \mathbb{Z}_{>0}$. Fix a multiplicity array $$L = (L_i^{(a)} \in \mathbb{Z}_{>0} : (a,i) \in {\cal H}).$$ We typically define a partition to be a multiset of integers sorted in decreasing order. Define a \textbf{rigged partition} to be a multiset of integer pairs $(i,x)$ with $i > 0$, with these pairs sorted in decreasing lexicographic order. We will call each $(i,x)$ a \textit{string}, with $i$ the \textit{size} or \textit{length} of the string and $x$ the \textit{quantum number}, \textit{label}, or \textit{rigging} of the string. By a \textbf{rigged configuration} we will mean a pair $(\nu,J)$ where $\nu = \{\nu^{(a)} : a \in I\}$ is a sequence of rigged partitions and $J = (J_i^{(a)})_{(a,i)\in {\cal H}}$ where each $J_i^{(a)}$ is the weakly increasing sequence of riggings of strings in $\nu^{(a)}$ whose length is $i$. The \textbf{vacancy number} of $\nu$ is defined as $$p_i^{(a)} = p_i^{(a)}(\nu) = -\sum_{(b,j)\in{\cal H}}{A_{ab}\min(i,j)m_j^{(b)}},$$ where $m_j^{(b)}$ is the number of parts in the partition $\nu^{(b)}$ with length $j$. The \textbf{coquantum number} or \textbf{colabel} of a string $(i,x)$ is defined to be $p_i^{(a)}-x$. The $a$th part of $(\nu,J)$ is often denoted by $(\nu,J)^{(a)}$ for brevity.

To give the definition of ${\cal B}(\infty)$ rigged configurations, denoted $RC(\infty)$, let $\nu_{\emptyset}$ be the multipartition with all parts empty; that is, set $\nu_{\emptyset} = (\nu^{(1)},\ldots,\nu^{(n)})$ where $\nu_i^{(a)} = \emptyset$ for all $(a,i) \in {\cal H}$. Therefore the rigging $J_{\emptyset}$ of $\nu_{\emptyset}$ must be $J_i^{(a)} = \emptyset$ for all $(a,i) \in {\cal H}$.
\begin{mydef}
The Kashiwara operators $e_a$ and $f_a$ act on elements $(\nu,J) \in RC(\infty)$ as follows: Fix $a \in I$,  and let $x$ denote the smallest label of $(\nu,J)^{(a)}$, assuming $(\nu,J)^{(a)} \neq \emptyset$. 
\begin{enumerate}
\item Set $e_a(\nu,J) = 0$ if $x \geq 0$. Otherwise, let $l$ denote the smallest length of all strings which have label $x$ in $(\nu,J)$. We obtain the rigged configuration $e_a(\nu,J)$ by replacing the string $(l,x)$ with $(l-1,x+1)$ and then changing all the other labels to ensure that all colabels are preserved.
\item Add the string $(1,-1)$ to $(\nu,J)^{(a)}$ if $x>0$. Otherwise, let $l$ denote the greatest length of all strings which have label $x$ in $(\nu,J)^{(a)}$. Replace the string $(l,x)$ by $(l+1,x-1)$, then change all the other labels to ensure that all colabels are preserved. The result is $f_a(\nu,J)$. \\
If $(\nu,J)^{(a)}$ is empty, then $f_a$ adds the string $(1,-1)$ to $(\nu,J)^{(a)}$. 
\end{enumerate}
$RC(\infty)$ is the graph generated by $(\nu_{\emptyset},J_{\emptyset})$ using $e_a$ and $f_a$, for $a \in I$. 
\end{mydef}

We now give the remaining part of the crystal structure: $$\epsilon_a(\nu,J) = \max\{k \in \mathbb{Z}_{\geq 0} : e_a^k(\nu,J) \neq 0\},$$ $$\phi_a(\nu,J) = \epsilon_a(\nu,J) + \langle h_{\alpha}, \mathrm{wt}(\nu,J) \rangle,$$ $$\mathrm{wt}(\nu,J) = -\sum_{(a,i) \in {\cal H}}{im_i^{(a)}\alpha_a} = -\sum_{a \in I}{|\nu^{(a)}|\alpha_a},$$ where $\{\alpha_a\}_{a \in I}$ denotes the simple roots.

\subsection{Kashiwara Operators Acting in a Cascading Sequence Arrangement}
\label{cs action}
We show in this section that $RC(\infty)$ Kashiwara operators act in a nice way when arranged in a cascading sequence. Let $R = (\nu_1,\nu_2,\ldots,\nu_n)$ be a $B(\infty)$ rigged configuration of $A_n$ type where $\nu_{i}$ is the $i$th rigged partition whose $j$th row has rigging $\mathrm{rig}_i^j$. 
\begin{notn}
Whenever we write a rigged configuration in the form $$R = (\nu_1,\nu_2,\ldots,\nu_n),$$ it is understood that each $\nu_i$ is a rigged partition carrying the riggings information $\mathrm{rig}_i^j$. \\
Fix rigged partition $\nu_m$. If $\nu_m = \emptyset$, then we regard $\nu_m$ as a single empty row $r_1$ whose length is zero and whose rigging is zero. Generally, if we label as $r_1, r_2, \ldots, r_k$ the rows of $\nu_m$ from top to bottom, then we regard $\nu_m$ has having an ``empty row'' $r_{k+1}$ beneath $r_k$, where $r_{k+1}$ is understood to have zero length and a rigging of zero.
\end{notn}
Let $\alpha$ denote the cascading sequence of $R$. Recall how the vacancy number changes when a Kashiwara operator acts on $R$: \\
If the Kashiwara operator $f_a$ adds a box to a row of length $l$ in $\nu_a$, then the vacancy numbers of $R$ are changed using the formula 
$$p_i^{(b)} =
\left\{
	\begin{array}{ll}
		p_i^{(b)}  & \mbox{if } i \leq l \\
		p_i^{(b)}-A_{ab} & \mbox{if } i > l
	\end{array}
\right.$$ 
where $p_i^{(b)}$ denotes the vacancy number of a row of length $i$ in $\nu_b$, and
$$A_{ab} =
\left\{
	\begin{array}{ll}
		-1  & \mbox{if } b = a \pm 1 \\
		2 & \mbox{if } b = a \\
		0 & \mbox{otherwise}
	\end{array}
\right.$$

For each partition $\lambda$, we will denote by $\lambda^b$ the $b$th part (row) of $\lambda$ and by $\tilde{\lambda}^b$ the portion of $\lambda^b$ that has no boxes beneath it; we call $\tilde{\lambda}^b$ the \textbf{stretch} of $\lambda^b$. For instance, $T$ in Example \ref{test} has $\tilde{T}^4 = \young(45)$. If $\lambda$ is a rigged partition, by the \textit{rigging of the stretch} $\tilde{\lambda}^b$ we will always mean the rigging of the row $\lambda^b$. Also, letting $\lambda^t$ denote the transpose of $\lambda$, $l(\lambda) := \max(\lambda^t)$ is then the number of rows $\lambda$ has.

By an integer sequence $\gamma = (\gamma_1,\gamma_2,\ldots,\gamma_p)$ \textbf{acting} on a rigged configuration $R'$ we will always mean the corresponding sequence of Kashiwara operators $\{f_i | i \in \gamma\}$ acting on $R'$. More precisely, $\gamma$ acts on $R'$ by $f_{\gamma}R' = f_{\gamma_p}f_{\gamma_{p-1}} \cdots f_{\gamma_1}R'$.

When working with cascading sequences, we can rely on the following useful lemmas:

Let $I = (a,a+1,\ldots,m)$ be an $m$-lower subinterval of the cascading sequence $\alpha$. Denote by $\alpha_I$ the subsequence of $\alpha$ before $I$. Let $R_I = (\mu_1,\mu_2,\ldots,\mu_n)$ denote the preexisting rigged configuration (corresponding to $\alpha_I$). Whenever $I$  acts on $R_I$, it adds one box to each of the partitions $\mu_a, \mu_{a+1}, \ldots, \mu_{m-1}, \mu_m$. As will be proven below, the box added to any $\mu_j$ is of two forms: contributing and noncontributing. A \textbf{contributing box} is a single box $$\framebox(10,9){}-\!1$$ which contributes $-1$ to the rigging of the row to which this box is added, contributes $-1$ to the rigging of any row of $\mu_j$ longer than the row to which it is added, and contributes $+1$ to the rigging of any row of $\mu_{j+1}$ longer than the row to which it is added, but does not change the riggings of $\mu_{b}$ for $b \neq j, j+1$. A \textbf{noncontributing box} is a single box $$\framebox(10,9){}\,0$$ with rigging $0$, which does not change the riggings of any rigged partition.

\begin{rem}
The contributing and noncontributing boxes describe the \textit{cumulative} effect of the action of $f_I$, demonstrated in Lemma \ref{snake}.
\end{rem} 

Let us analyze in more detail how $I$ acts on the preexisting rigged configuration $R_I$ corresponding to $\alpha_I$. For any partition $\lambda$ let $\overline{\lambda}$ denote the portion of $\lambda$ beneath the top row.

\begin{lem}
\label{sandwich implies}
Let $\lambda_1, \lambda_2$ be two partitions satisfying $\overline{\lambda_2} \subset \lambda_1 \subset \lambda_2$. Fix a positive integer $p$. Let $u_1$ be the uppermost row of $\lambda_1$ with length $p$ and $u_2$ be the uppermost row of $\lambda_2$ with $|u_2| \leq p$. Then every row of $\lambda_1$ below $u_1$ is no longer than $u_2$.
\end{lem}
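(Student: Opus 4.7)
The plan is to translate the set-theoretic containments into row-by-row numerical inequalities, then invoke the weak monotonicity of row lengths in $\lambda_2$. Writing $\lambda_1 = (\lambda_1^1, \lambda_1^2, \ldots)$ and $\lambda_2 = (\lambda_2^1, \lambda_2^2, \ldots)$ from top to bottom in the English convention fixed earlier, the hypothesis $\overline{\lambda_2} \subset \lambda_1 \subset \lambda_2$ yields, for every $i \geq 1$, the chain
\[
\lambda_2^{i+1} \leq \lambda_1^i \leq \lambda_2^i,
\]
because deleting the top row of $\lambda_2$ shifts every remaining row of $\lambda_2$ up by one before the inclusion into $\lambda_1$ is imposed.

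Next I would locate $u_2$ relative to $u_1$. Let $i_0$ denote the row index of $u_1$ in $\lambda_1$, so $\lambda_1^{i_0} = p$. Applying the left inequality at $i = i_0$ gives $\lambda_2^{i_0+1} \leq p$. Since $u_2$ is by definition the uppermost row of $\lambda_2$ whose length is at most $p$, its row index $j_0$ must satisfy $j_0 \leq i_0 + 1$.

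Finally, take any row of $\lambda_1$ strictly below $u_1$, say $\lambda_1^{i_0+k}$ with $k \geq 1$. The right inequality gives $\lambda_1^{i_0+k} \leq \lambda_2^{i_0+k}$, and because $\lambda_2$ has weakly decreasing row lengths together with $i_0+k \geq i_0+1 \geq j_0$, we obtain $\lambda_2^{i_0+k} \leq \lambda_2^{j_0} = |u_2|$. Chaining these yields $\lambda_1^{i_0+k} \leq |u_2|$, which is the claim.

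The only real delicacy in the argument is parsing the sandwich hypothesis correctly, in particular getting the one-row shift right when translating $\overline{\lambda_2} \subset \lambda_1$ into $\lambda_2^{i+1} \leq \lambda_1^i$. After that, the proof is a two-step comparison via monotonicity of $\lambda_2$, and I do not foresee any serious obstacle; no case analysis or further input from the surrounding cascading-sequence machinery should be needed.
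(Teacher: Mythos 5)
Your proof is correct and follows essentially the same route as the paper's: translate $\overline{\lambda_2}\subset\lambda_1\subset\lambda_2$ into the row-wise inequalities $\lambda_2^{i+1}\leq\lambda_1^i\leq\lambda_2^i$ and deduce that the index of $u_2$ is at most one below that of $u_1$. The only difference is cosmetic — the paper splits into the cases $c=b$ and $c=b+1$, whereas you absorb both into a single monotonicity chain $\lambda_1^{i_0+k}\leq\lambda_2^{i_0+k}\leq\lambda_2^{j_0}$, which is slightly cleaner.
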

\begin{proof}
Suppose $u_1 = \lambda_1^b$ and $u_2 = \lambda_2^c$. Since $\lambda_1 \subset \lambda_2$, we must have $c \geq b$. Since $\overline{\lambda_2} \subset \lambda_1$, we must have $c \leq b+1$. If $c = b$, then we have $|u_1| = |u_2|$, and the claim follows immediately. Suppose $c = b+1$. Then we have $|\lambda_1^d| \leq |\lambda_2^{b+1}| = |u_2|$ for any $d \geq b+1$, since $\lambda_1 \subset \lambda_2$.
\end{proof}

\begin{lem}[Main Lemma]
\label{snake}
$R_I  = (\mu_1,\mu_2,\ldots,\mu_n)$ for $I = (a,a+1,\ldots,m)$ satisfies the following properties. Let $r_a$ be the top row of $\mu_a$, and $r_i$ be the uppermost row of $\mu_i$ with $|r_i| \leq |r_{i-1}|$. 
\begin{enumerate}
\item \label{p one} The partitions $\mu_1,\mu_2,\ldots,\mu_{m-1}$ have all riggings equal to zero. For $l \in [m-1]$ we have $\overline{\mu_{l+1}} \subset \mu_l \subset \mu_{l+1}$. By Lemma \ref{sandwich implies} it follows in particular that: \\
Fix a positive integer $p$. For $l \in [m-1]$ let $u_l$ be the uppermost row of $\mu_l$ with length $p$, and let $u_{l+1}$ be the uppermost row of $\mu_{l+1}$ such that $|u_{l+1}| \leq p$. Then every row of $\mu_l$ below $u_l$ must be no longer than $u_{l+1}$.
\item \label{p two} The rows of the rigged partition $\mu_m$ above $r_m$ have the same rigging as $r_m$, and this rigging is non-positive and minimal in $\mu_m$.
\item \label{p three} $I$ acts on $R_I$ by adding a noncontributing box to the rows $r_a, r_{a+1}, \ldots, r_{m-1}$ and a contributing box to the row $r_m$.
\end{enumerate}
\end{lem}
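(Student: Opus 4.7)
The plan is to induct on $b$, where $I = I_b$ is the $b$-th lower subinterval of $\alpha$. For the base case $b = 1$, the rigged configuration $R_{I_1}$ is empty, so parts \ref{p one} and \ref{p two} hold vacuously (each $\mu_i$ is the conventional empty row of length and rigging zero), and part \ref{p three} follows from a direct computation: applying $f_a, f_{a+1}, \ldots, f_m$ successively, each $f_{a+k}$ creates a string $(1,-1)$ in $\mu_{a+k}$, and the next action bumps the previously inserted rigging from $-1$ back up to $0$ via colabel preservation using $A_{a+k, a+k+1} = -1$. After the full sweep, only $\mu_m$ retains rigging $-1$, giving precisely the noncontributing/contributing pattern in the statement.

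For the induction step, set $I' := I_{b-1} = (a', \ldots, m')$ and $R' := R_{I_{b-1}}$, so that $R_I$ arises by letting $I'$ act on $R'$. The ordering of a cascading sequence forces $m' \geq m$, with $a' \leq a$ when $m' = m$. By the inductive hypothesis applied to $I'$, we know the exact effect of $I'$ on $R'$: a noncontributing box at each of $r'_{a'}, \ldots, r'_{m'-1}$ and a contributing box at $r'_{m'}$. Part \ref{p one} for $R_I$ then follows by routine bookkeeping: the only rigging-altering step touches $\mu_{m'}$ and $\mu_{m'+1}$, both of index $\geq m$, so the zero-rigging block on $\mu_1, \ldots, \mu_{m-1}$ is preserved; the sandwich inclusions $\overline{\mu_{l+1}} \subset \mu_l \subset \mu_{l+1}$ are maintained because $I'$ extends exactly one row of each adjacent pair at positions whose vertical compatibility is guaranteed by the inductive description together with Lemma \ref{sandwich implies}. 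Once parts \ref{p one} and \ref{p two} are established, part \ref{p three} follows from the Kashiwara rule combined with colabel preservation, exactly as in the base case: all-zero riggings in $\mu_a, \ldots, \mu_{m-1}$ force each $f_{a+k}$ ($k < m-a$) to act on the longest (hence topmost) row $r_{a+k}$, and the minimality statement in part \ref{p two} forces $f_m$ to act on $r_m$ with the $-1$ adjustment.

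The main obstacle is verifying part \ref{p two}. We must identify $r_m$ inside $\mu_m$ and show that every row above it shares a common, non-positive, minimal rigging. The approach is to trace the history of $\mu_m$: its riggings were last altered by the most recent $m$-lower subinterval $I_c$ with $c < b$, since every intervening $I_{c+1}, \ldots, I_{b-1}$ has right endpoint $> m$ and deposits only noncontributing boxes in $\mu_m$, which do not alter riggings. Applying the inductive hypothesis at step $c$ identifies the row $I_c$ extended and the rigging it produced; subsequent noncontributing additions stack further rows on top while keeping that rigging common across the block. A secondary induction on the number of $m$-lower subintervals preceding $I$ confirms that this common rigging is non-positive and decreases by exactly one per contributing event. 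Crucially, the ``maximal $d_k$'' selection in the lane-forming procedure is precisely the combinatorial shadow of the Kashiwara rule ``greatest length row with smallest label,'' which ensures that the combinatorial row $r_m$ coincides with the row on which $f_m$ will act when $I$ is applied to $R_I$.
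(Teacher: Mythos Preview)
Your inductive framework matches the paper's, but there are two genuine gaps and one point where you hand-wave something that requires work.

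\textbf{The cascading-sequence structure in Property~\ref{p two}.} In a cascading sequence the $m$-component is contiguous: if $I_c$ and $I_b$ are both $m$-lower subintervals with $c<b$, then every $I_{c+1},\ldots,I_{b-1}$ is also $m$-lower. There are therefore \emph{no} intervening subintervals with right endpoint $>m$, and your proposed history-tracing argument is built on a case that cannot occur. The correct dichotomy is: either $I_b$ is the first $m$-lower subinterval (so Property~\ref{p one} for $I_{b-1}$ gives $\mu_m$ all-zero riggings and Property~\ref{p two} is trivial), or $I_{b-1}$ is itself $m$-lower and one must check Property~\ref{p two} directly from the action just computed. The paper does the latter: after the step-by-step tracking establishing Property~\ref{p three}, it shows that in $\mu'_m$ all rows of length $\geq |r_m|+1$ share a common minimal non-positive rigging, and then verifies that the new $r'_m$ (computed from the next $a'\geq a$) satisfies $|r'_m|\geq |r_m|+1$.

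\textbf{Circularity via lanes.} Invoking the lane-forming rule to pin down where $f_m$ acts is circular: the lane/column correspondence (Lemma~\ref{lane col}) is deduced \emph{from} Lemma~\ref{snake}, not conversely. The non-circular argument is the paper's: once $f_{m-1}$ has added a box to $r_{m-1}$, colabel preservation raises by $+1$ the riggings of exactly those rows of $\mu_m$ with length $>|r_{m-1}|$, i.e.\ the rows strictly above $r_m$; combined with Property~\ref{p two} this makes $r_m$ the longest row with minimal label, so $f_m$ lands there.

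\textbf{The sandwich inclusions.} Calling $\overline{\mu_{l+1}}\subset\mu_l\subset\mu_{l+1}$ ``routine bookkeeping'' understates it. When $I'$ adds boxes to $r'_l=\mu_l^c$ and $r'_{l+1}=\mu_{l+1}^d$, you must first argue $d\in\{c,c+1\}$ from the inductive sandwich, and then verify both inclusions separately in each case by comparing $|r'_l|$, $|r'_{l+1}|$, and the adjacent row lengths. The paper does this case analysis explicitly; it is where Lemma~\ref{sandwich implies} is actually used.
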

\begin{proof}
Induction. $R_I$ clearly satisfies these properties if $I$ is the first or second lower subinterval of $\alpha$. Now consider the general case, assuming that $R_I$ satisfies these properties.

Let $R'_{I} = (\mu'_1,\mu'_2,\ldots,\mu'_n)$ denote the rigged configuration corresponding to $\alpha'_{I} := \alpha_I \oplus I$. We apply $I$ to $R_I$ to obtain $R'_{I}$ and prove that it satisfies these properties as well.

We first check Property \ref{p three} and the first statement of Property \ref{p one} for $R'_{I}$. By Property \ref{p one} for $R_I$, Kashiwara operator $f_a$ adds a box to the first row $r_a$ of $\mu_a$, adding $-1$ to its rigging, adding $+1$ to the vacancy number as well as the rigging of rows of $\mu_{a+1}$ longer than $r_a$, and not changing the riggings of $\mu_1,\ldots,\mu_{a-1}$. $f_{a+1}$ then adds a box to the uppermost row $r_{a+1}$ of $\mu_{a+1}$ with $|r_{a+1}| \leq |r_a|$, adding $-1$ to its rigging, adding $-2$ to the vacancy number as well as the rigging of rows of $\mu_{a+1}$ longer than $r_{a+1}$ (so these rows end up with a rigging of $1 - 2 = -1$), adding $+1$ to the vacancy number and the rigging of rows of the $a$th partition longer than $r_{a+1}$ (which by Property \ref{p one} gives $\mu'_a$ with zero riggings), and adding $+1$ to the vacancy number as well as the rigging of rows of $\mu_{a+2}$ longer than $r_{a+1}$. $f_{a+2}$ then adds a box to the uppermost row $r_{a+2}$ of $\mu_{a+2}$ with $|r_{a+2}| \leq |r_{a+1}|$, adding $-1$ to its rigging, adding $-2$ to the vacancy number and the rigging of rows of $\mu_{a+2}$ longer than $r_{a+2}$ (so these rows end up with a rigging of $1 - 2 = -1$), adding $+1$ to the vacancy number and the rigging of rows of the $(a+1)$st partition longer than $r_{a+2}$ (which by Property \ref{p one} gives $\mu'_{a+1}$ with zero riggings), and adding $+1$ to the vacancy number as well as the rigging of rows of $\mu_{a+3}$ longer than $r_{a+2}$. Iterating this process, for $j = 0, 1, \ldots, m-a-2$ we obtain $\mu'_{a+j}$ by adding a noncontributing box to row $r_{a+j}$ of $\mu_{a+j}$ so $\mu'_{a+j}$ has zero riggings. 

Now, after $f_{m-1}$ added a box to row $r_{m-1}$ of $\mu_{m-1}$, all rows of the resulting $(m-1)$st partition with length at least $|r_{m-1}|+1$ have rigging $-1$. By Property \ref{p two} for $R_I$, this action of $f_{m-1}$ must have contributed $+1$ to the vacancy number and the rigging of all rows of $\mu_m$ longer than $r_{m-1}$, and consequently these rows of $\mu_m$ now have greater rigging than $r_m$ does, so $r_m$ is now the longest row of $\mu_m$ with the smallest rigging. Finally, $f_m$ adds a box to $r_m$, adding $-1$ to its rigging, adding $+1$ to the vacancy number and the rigging of rows of the $(m-1)$st partition longer than $|r_m|$ (which by Property \ref{p one} gives $\mu'_{m-1}$ with zero riggings), adding $-2$ to the vacancy number and the rigging of rows of the $m$th partition longer than $r_m$ (so these rows now have the same rigging as rows of length $|r_m|+1$), and adding $+1$ to the vacancy number as well as the rigging of rows of $\mu_{m+1}$ longer than $|r_m|$. This shows that $\mu'_{m-1}$ is obtained from $\mu_{m-1}$ by adding a noncontributing box to $r_{m-1}$, and that $\mu'_m$ is obtained from $\mu_m$ by adding a contributing box to $r_m$.

Now we verify Property \ref{p two} for $R'_I = (\mu'_1,\mu'_2,\ldots,\mu'_n)$. By above, we conclude that rows of $\mu'_m$ with length at least $|r_m| + 1$ have identical rigging, and this rigging is minimal and non-positive. Let $a' \geq a$. Let $r'_{a'}$ be the top row of $\mu'_{a'}$, and let $r'_{k}$ be the uppermost row of $\mu'_{k}$ with $|r'_k| \leq |r'_{k-1}|$, for $k = a'+1, a'+2, \ldots, m$. Notice that we have $|r'_{a'}| = |r_a|+1$ if $a' = a$ and we have $|r'_{a'}| \geq |r_a|+1$ if $a' > a$. Since $\mu'_{k}$ contains a row with length $|r_k|+1$, we have $|r'_k| \geq |r_k|+1$, for $k = a', a'+1, \ldots, m$. It follows that the rows of $\mu'_m$ above $r'_m$ have the same rigging as $r'_m$, and this rigging is non-positive and minimal in $\mu'_m$.

Finally, we verify the second statement of Property \ref{p one} for $R'_I = (\mu'_1,\mu'_2,\ldots,\mu'_n)$. If $l, l+1 < a$, then the claim follows by hypothesis. If $l = a-1$, then the claim follows since $\mu'_{l+1} = \mu'_a$ is obtained from $\mu_a$ by adding a single box to the first row. Now suppose $l \in [a,m-1]$. $\mu'_l, \mu'_{l+1}$ are obtained from $\mu_l, \mu_{l+1}$ respectively by adding a box via Property \ref{p three}. Let $r_l = \mu_l^c, r_{l+1} = \mu_{l+1}^d$ be the rows of $\mu_l, \mu_{l+1}$ respectively to which the box was added. Then $r_{l+1}$ is the uppermost row of $\mu_{l+1}$ no longer than $r_l$. Since $\mu_l \subset \mu_{l+1}$, we must have $d \geq c$. Since $\overline{\mu_{l+1}} \subset \mu_l$, we must have $d \leq c+1$. Thus, either $d = c$ or $d = c+1$. Suppose $d = c$. Then $|r_l| = |r_{l+1}|$, and it follows immediately that $\mu'_l \subset \mu'_{l+1}$. If $r_{l+1}$ is the first row, then $\overline{\mu'_{l+1}} \subset \mu'_l$ by hypothesis. If $r_{l+1}$ is not the first row, we still have $\overline{\mu'_{l+1}} \subset \mu'_l$ since $|(\mu'_l)^{c-1}| = |\mu_l^{c-1}| \geq |r_{l+1}|+1 = |(\overline{\mu'_{l+1}})^{c-1}|$. Suppose $d = c+1$. Then we must have $|r_l| < |\mu_{l+1}^c|$, so $|(\mu'_l)^c| = |r_l| + 1 \leq |\mu_{l+1}^c| = |(\mu'_{l+1})^c|$ and thus $\mu'_l \subset \mu'_{l+1}$. Since $|r_{l+1}| = |\mu_{l+1}^{c+1}| \leq |\mu_l^c| = |r_l|$, we have $|(\mu'_{l+1})^{c+1}| = |r_{l+1}| + 1 \leq |\mu_l^c| + 1 = |(\mu'_l)^c|$, and thus we have $\overline{\mu'_{l+1}} \subset \mu'_l$. This completes the induction. 
\end{proof}
\begin{rem}
\label{bar rem}
It is easy to see that the first containment $\overline{\mu_{l+1}} \subset \mu_l$ in fact holds for all $l \in [n-1]$, since no more boxes will be added to the $(l+1)$st partition once all the $(l+1)$-lower subintervals have acted.
\end{rem}

It follows immediately from Lemma \ref{snake} that
\begin{lem}
\label{unif con} The following are true.
\begin{enumerate}
\item If $\alpha$ ends in a $p$-lower subinterval, then $\nu_q$ of $R$ has zero riggings for all $q \leq p-1$.
\item All contributing boxes (and hence negative riggings) to the $\nu_m$ of $R$ are added by $m$-lower subintervals of $\alpha$.
\item All positive contributions to the riggings of $\nu_m$ are added by $(m-1)$-lower subintervals of $\alpha$, which add no boxes to $\nu_m$.
\item \label{single strip}Suppose $I_1, I_2$ are $m$-lower subintervals of $\alpha$ with $I_1$ preceding $I_2$. If $I_1$ adds a contributing box to column $i_1$ and $I_2$ adds a contributing box to column $i_2$ of the $m$th partition, then we have $i_1 < i_2$.
\end{enumerate}
\end{lem}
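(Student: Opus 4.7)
The plan is to derive each of the four items directly from Lemma \ref{snake} together with the definitions of contributing and noncontributing boxes; Parts 1 through 3 follow by simple bookkeeping of rigging contributions, while Part 4 requires a short induction within the $m$-component.

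For Parts 1 through 3, I would trace how riggings change. A noncontributing box changes no riggings, while a contributing box in $\mu_j$ affects only the riggings of $\mu_j$ (contributing $-1$ to the added row and to longer rows) and of $\mu_{j+1}$ (contributing $+1$ to longer rows). By Lemma \ref{snake} Property 3, each lower subinterval $I = (a, \ldots, m)$ places exactly one contributing box, and it lies in $\mu_m$. For Part 1, the hypothesis that $\alpha$ ends in a $p$-lower subinterval, combined with the definition of a cascading sequence as a concatenation of components in decreasing index order, forces every $I$ in $\alpha$ to have terminal index $\geq p$; hence every contributing box lives in some $\mu_m$ with $m \geq p$ and influences only $\mu_m$ and $\mu_{m+1}$, so $\nu_q$ for $q \leq p-1$ receives no rigging contributions and has zero riggings throughout. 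Part 2 follows because a negative rigging contribution to $\nu_m$ can arise only from a contributing box placed in $\mu_m$, and such a box is placed only by an $m$-lower subinterval. Part 3 is the symmetric statement: a $+1$ contribution to the riggings of $\nu_m$ arises only from a contributing box placed in $\mu_{m-1}$, which can only be placed by an $(m-1)$-lower subinterval; such a subinterval ends at $m-1$ and so adds no box to $\mu_m$.

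For Part 4, observe that the $m$-component is a contiguous block of $\alpha$ consisting entirely of $m$-lower subintervals ordered by nonincreasing length, so between $I_1$ and $I_2$ only further $m$-lower subintervals act, each adding one more contributing box to $\mu_m$. I would argue by strong induction on the position within the $m$-component that each successive contributing box in $\mu_m$ lands at a column strictly greater than all of its predecessors. For the inductive step, apply Lemma \ref{snake} Property 2 just before $I_2$ acts: the chosen row $r_m^{(2)}$ carries the minimum (non-positive) rigging and is the uppermost row of the current $\mu_m$ with $|r_m^{(2)}| \leq |r_{m-1}^{(2)}|$. Combining this with the sandwich inclusion $\overline{\mu_m} \subset \mu_{m-1}$ from Property 1 and the induction hypothesis (which controls the positions of all prior contributing boxes in $\mu_m$), I would conclude that $|r_m^{(2)}|$ before extension is at least $i_1$, giving $i_2 \geq i_1 + 1$.

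The main obstacle I expect is the bookkeeping in Part 4: carefully tracking how the intermediate $m$-lower subintervals in the component (whose starting indices $a_j$ form a nondecreasing sequence) coordinate the growth of $\mu_{m-1}$ and $\mu_m$ so as to preserve the sandwich structure while shifting the staircase of Lemma \ref{snake} progressively downward. Once this is verified, the position chosen for $r_m^{(2)}$ is forced to lie at or below the row that received $I_1$'s contributing box, yielding the strict column inequality.
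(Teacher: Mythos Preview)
Your treatment of Parts 1--3 matches the paper's: all three are immediate consequences of Lemma~\ref{snake}, and your bookkeeping of which lower subintervals deposit contributing boxes (and where the $\pm 1$ rigging contributions land) is exactly right.

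For Part 4 your argument is correct but more elaborate than necessary. The paper bypasses both the sandwich inclusion and the induction on position within the $m$-component. Instead it compares the staircase of rows $r_{a_1}, r_{a_1+1}, \ldots, r_m$ (defined just before $I_1$ acts) directly with the staircase $r'_{a_2}, r'_{a_2+1}, \ldots, r'_m$ (defined just before $I_2$ acts), and observes that $|r'_k| > |r_k|$ for every $k \in [a_2,m]$. This strict inequality is already established inside the proof of Lemma~\ref{snake} when verifying Property~\ref{p two} for the next interval (namely, after an $m$-lower subinterval acts, each relevant $\mu_k$ contains a row of length $|r_k|+1$, forcing $|r'_k| \geq |r_k|+1$). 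Applying this once for each $m$-lower subinterval from $I_1$ up to the one just before $I_2$ gives $|r'_m| > |r_m|$, hence $i_2 = |r'_m|+1 > |r_m|+1 = i_1$. Your route via $\overline{\mu_m} \subset \mu_{m-1}$ and a separate induction reaches the same conclusion, but the paper's argument is shorter because the needed monotonicity is already packaged in Lemma~\ref{snake}.
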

\begin{proof}
The first three items follow immediately from the lemma. For the fourth item, note that $I_1 = (a_1,\ldots,m)$ and $I_2 = (a_2,\ldots,m)$, where $a_1 \leq a_2$. Let $r_{a_1}$ be the top row of $\mu_{a_1}$, and $r_i$ be the uppermost row of $\mu_i$ with $|r_i| \leq |r_{i-1}|$. Let $r'_{a_2}$ be the top row of $\mu_{a_2}$, and $r'_i$ be the uppermost row of $\mu_i$ with $|r'_i| \leq |r'_{i-1}|$. Notice that $|r'_k| > |r_k|$ for $k = a_2, a_2+1, \ldots, m$. Applying Property \ref{p three} of Lemma \ref{snake}, we deduce that the contributing box added by $I_2$ must be strictly to the right of the contributing box added by $I_1$.
\end{proof}

We thus obtain the following interesting result.
\begin{thm}
\label{iden rig}
Identical rows of $\nu_m$ of $R$ have equal riggings, for any $m \in [n]$. 
\end{thm}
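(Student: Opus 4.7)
The plan is to induct on the length of the cascading sequence $\alpha$ corresponding to $R$. The base case is the empty sequence, where every partition is empty and the claim is vacuous. For the inductive step, write $\alpha = \alpha_I \oplus I$ with $I = (a, \ldots, t)$ its last lower subinterval, assume the theorem holds for the rigged configuration $R_I = (\mu_1, \ldots, \mu_n)$ corresponding to $\alpha_I$, and verify that the configuration obtained after $I$ acts still satisfies it.

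Fix $m \in [n]$ and split into cases on $t$. If $t > m$, then $I$ lies in a component strictly earlier than the $m$-component, so Property \ref{p one} of Lemma \ref{snake} applied to $I$ forces every rigging of $\mu_m$ to be zero; any noncontributing box added to $\mu_m$ (when $a \leq m$) leaves the riggings at zero, and the claim is immediate. If $t = m - 1$, then the contributing box $I$ places in $\mu_{m-1}$, say at a row $s$, adds $+1$ to the rigging of every row of $\mu_m$ of length strictly greater than $|s|$; since each length class of $\mu_m$ is either entirely affected or entirely unaffected, this uniform update preserves the inductive property. If $t < m - 1$ or $a > m$, nothing in $\mu_m$ changes.

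The main case, and the key obstacle, is $t = m$: a contributing box is added to a specific row $r_m$ of $\mu_m$, decreasing by one the rigging of $r_m$ and of every row of $\mu_m$ strictly longer than $r_m$, while leaving all other riggings alone. The subtlety is that the extended $r_m$, now of length $|r_m| + 1$, may merge into an already-present length class, and one has to check that the merge is compatible with the equal-rigging condition. To this end I would invoke Property \ref{p two} of Lemma \ref{snake}, which says every row of $\mu_m$ strictly above $r_m$ shares the rigging $\rho := \mathrm{rig}(r_m)$; combined with the inductive hypothesis applied to the length-$|r_m|$ class of $\mu_m$, every row of length at least $|r_m|$ has rigging $\rho$ before $I$ acts. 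After $I$, every row of length greater than $|r_m|$ together with the extended $r_m$ at length $|r_m| + 1$ has rigging $\rho - 1$, while the remaining rows of length $|r_m|$ retain $\rho$ and shorter rows are unchanged. A length-by-length inspection then confirms that identical rows of the updated partition at index $m$ still have equal riggings, completing the induction.
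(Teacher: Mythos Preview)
Your proof is correct and follows essentially the same route as the paper's: an induction along the cascading sequence, split into cases according to the tail $t$ of the acting subinterval, with the trivial cases $t>m$, $t<m-1$ handled by Property~\ref{p one} and the definition of noncontributing boxes, and the $(m-1)$-component handled by the uniform $+1$ update.

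The only substantive difference is in the key case $t=m$. The paper maintains the invariant ``rows of length at least $|r'|+1$ share a common rigging'' by appealing to item~\ref{single strip} of Lemma~\ref{unif con} (successive contributing boxes land in strictly increasing columns), whereas you obtain the same invariant directly from Property~\ref{p two} of Lemma~\ref{snake} (all rows above $r_m$ already share the rigging~$\rho$ of~$r_m$), combined with the inductive hypothesis on the length-$|r_m|$ class. Both arguments are short consequences of Lemma~\ref{snake}; yours is arguably slightly more direct since Property~\ref{p two} hands you the needed equal-rigging block immediately, while the paper rebuilds it step by step via the column-monotonicity of contributing boxes.
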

\begin{proof}
Before any $m$-lower subinterval has acted, the $m$th partition has zero riggings. After the first $m$-lower subinterval adds a contributing box to row $r$, every row with length at least $|r|+1$ has rigging $-1$, while the rigging of every row with length at most $|r|$ remains unchanged. In general, assume that the $j$th $m$-lower subinterval has added a box to row $r'$ of the $m$th partition, so that rows with length at least $|r'|+1$ have equal rigging, and that identical rows with length at most $|r'|$ have equal rigging. By the fourth item of Lemma \ref{unif con}, the $(j+1)$st $m$-lower subinterval adds a contributing box to row $r''$ with $|r''| \geq |r'|+1$. In the resulting $m$th partition, rows of length at most $|r''|$ have unchanged riggings, which is the same for identical rows, while the new row with length $|r''|+1$ and other rows with length at least $|r''|+1$ receive a $-1$ contribution to their identical riggings. This shows that the riggings of identical rows remain equal after all the $m$-lower subintervals have acted.

Similarly, each time an $(m-1)$-lower subinterval acts, all the rows of $\nu_m$ no longer than a certain length $k$ experience no change in rigging, while all the rows of $\nu_m$ longer than $k$ receive $+1$ contribution to the rigging. Therefore, the riggings of identical rows remain equal after all the $(m-1)$-lower subintervals have acted.
\end{proof}

\subsection{Obtaining the Rigged Configuration from the Cascading Sequence}
\label{riggedlanes}
Now we relate the concepts in Subsection \ref{lanesdef} to the ${\cal B}(\infty)$ rigged configurations in the $A_n$ case. Let $R = (\nu_1,\nu_2,\ldots,\nu_n)$ be an $A_n$ rigged configuration. Let $\alpha$ denote the cascading sequence of $R$. We can obtain any partition in the corresponding rigged configuration without doing explicit calculation via the Kashiwara operators involved. This is done by partitioning $\alpha$ into lanes and then analyzing the relevant lanes.

Each column of $\nu_l$ ends in exactly one of the stretches of $\nu_l$. We denote by $\mathrm{col}(\tilde{\nu_l}^b)$ the set of columns of $\nu_l$ ending in the stretch $\tilde{\nu_l}^b$, and let $W_b^l := \{$lanes $L_i(l) | |L_i(l)| = b\}$.

By Lemma \ref{snake}, the $l$-lanes correspond precisely to the columns of $\nu_l$, and we have the following useful facts:

\begin{lem}
\label{lane col}
The set $\mathrm{col}(\tilde{\nu_l}^b)$ corresponds to the set $W_b^l$. In fact, under the definition of lanes, $L_i(l)$ corresponds exactly to the $i$th column of $\nu_l$, with the height of the column given by $|L_i(l)|$. 
\end{lem}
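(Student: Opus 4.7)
The strategy is induction on the number $b$ of lower subintervals of $\alpha$ processed so far, maintaining the invariant that, at each intermediate stage, the $i$th column of the current rigged partition $\mu_l$ has height exactly $|L_i(l)|$ for every $l \in [n]$, where the lanes are those built from $I_1, \ldots, I_b$ by the lane forming procedure of Subsection \ref{lanesdef}. The base case is immediate: the first lower subinterval $I_1 = (a_1, \ldots, m_1)$ places a single box into each of $\mu_{a_1}, \ldots, \mu_{m_1}$, while the procedure initializes each of $L_1(a_1), \ldots, L_1(m_1)$ as a length-one tuple.

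For the inductive step, suppose the invariant holds before $I_b = (a, a+1, \ldots, m)$ is processed. By Lemma \ref{snake}, the operator $f_l$ corresponding to the $k$th entry of $I_b$ (with $l = a + k - 1$) adds exactly one box to the row $r_l$ of $\mu_l$, placing it in column $|r_l|+1$ of the updated partition. Simultaneously, the lane forming procedure appends $I_b(k)$ to $L_{d_k+1}(l)$, lengthening that lane by one. Thus, to preserve the invariant, it suffices to show $|r_l| = d_k$ for each $k$.

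This equality is established by a nested induction on $k$. For $k=1$, $r_a$ is the top row of $\mu_a$ and its length equals the number of nonempty columns of $\mu_a$; by the outer invariant this equals the number of nonempty lanes $L_i(a)$, which coincides with $d_1$ by the initialization clause of the lane forming procedure. For $k > 1$, note that the lane formation rule ensures that the lane lengths $|L_1(l)| \geq |L_2(l)| \geq \cdots$ are weakly decreasing in $i$ (each new entry is placed in a lane strictly shorter than the lane to its left), and by the outer invariant the column heights of $\mu_l$ are correspondingly weakly decreasing in column index. Using the inner inductive hypothesis $|r_{l-1}| = d_{k-1}$ together with the defining property of $r_l$ as the uppermost row of $\mu_l$ with $|r_l| \leq |r_{l-1}|$, a direct counting argument in the spirit of Lemma \ref{sandwich implies} shows that $|r_l|$ is the largest index $i \leq d_{k-1}$ with $|L_i(l)| > |L_{i+1}(l)|$, which is precisely $d_k$.

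Once $|r_l| = d_k$ is in hand, the new box in column $d_k+1$ of $\mu_l$ corresponds exactly to the entry appended to $L_{d_k+1}(l)$, so the invariant survives the processing of $I_b$. Iterating over all lower subintervals of $\alpha$ produces the desired bijection between the $i$th column of $\nu_l$ and $L_i(l)$, with matching heights $|L_i(l)|$. The first statement of the lemma follows at once, since $\mathrm{col}(\tilde{\nu_l}^b)$ consists precisely of the columns of $\nu_l$ of height $b$ while $W_b^l$ consists of the lanes of length $b$. The main technical obstacle is the nested induction step, where one must convert the geometric description of $r_l$ (uppermost row no longer than $d_{k-1}$) into the combinatorial description of $d_k$ (maximal index $\leq d_{k-1}$ where consecutive lane lengths strictly drop); this hinges on the weakly-decreasing lane-length property together with the outer invariant.
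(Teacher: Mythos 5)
Your proof is correct and takes essentially the same route as the paper's: the paper compresses the whole argument into the single observation that adding a box to the longest row $r$ with $|r| \leq c$ is the same as adding a box to the $d$th column for the maximal $d \leq c+1$ whose height is strictly less than that of the $(d-1)$st column, which is exactly the identity $|r_l| = d_k$ that your nested induction establishes. Your write-up simply makes explicit the induction over lower subintervals that the paper leaves implicit.
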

\begin{proof}
Adding a box to the longest row $r$ with $|r| \leq c$ is the same as adding a box to the $d$th column for maximal $d \leq c+1$ whose height is strictly less than that of the $(d-1)$st column.
\end{proof}
\begin{rem}
In particular, the number of columns of height $b$ in $\nu_l$ is given by the number of $l$-lanes $L_i(l)$ with $|L_i(l)| = b$ in the corresponding cascading $n$-sequence.
\end{rem}

Roughly speaking, the riggings of $\nu_l$ are determined by the number of $l$-lanes that contain the right endpoint of some lower subinterval and by the number of $(l-1)$-lanes that contain the right endpoint of some lower subinterval. In Example \ref{lanes2}, if we fix $l = 9$, then the $9$-lane $L_2(9)$ is an example of the former because it contains the right endpoint of $I_5$, and the $8$-lane $L_3(8)$ is an example of the latter because it contains the right endpoint of $I_9$.

\begin{lem}
\label{lane rig}
Suppose $r$ is a row of $\nu_l$. Let $V^l_r$ denote the set of $l$-lanes $L_i(l)$ ending at a right endpoint, where $i \leq |r|$. Let $V^{l-1}_r$ denote the set of $(l-1)$-lanes $L_{i}(l-1)$ ending at a right endpoint, where $i \leq |r|$. Then the rigging of $r$ is given by $-|V^l_r| + |V^{l-1}_r|$.
\end{lem}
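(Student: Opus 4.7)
The plan is to count, via Lemma \ref{snake}, exactly which Kashiwara-operator actions in $\alpha$ change the rigging of the row $r$, and then to translate each such action into the language of lanes via Lemma \ref{lane col}. The contributing/noncontributing-box bookkeeping established in Lemma \ref{snake} immediately restricts the relevant events: a contributing box in $\mu_j$ affects only the riggings of $\mu_j$ and $\mu_{j+1}$, so the only operations that can alter a rigging of $\nu_l$ are contributing boxes placed by $l$-lower subintervals in $\nu_l$ (each contributing $-1$) and contributing boxes placed by $(l-1)$-lower subintervals in $\nu_{l-1}$ (each contributing $+1$); noncontributing boxes never touch any rigging.

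I would next localize each such event to a single column of $\nu_l$. A contributing box added by an $l$-lower subinterval to column $c$ of $\nu_l$ contributes $-1$ to the rigging of every row of $\nu_l$ of length $\geq c$ immediately after the addition. Because the $m$-components of $\alpha$ are processed in decreasing order of $m$ and because Lemma \ref{unif con}(4) forces successive $l$-lower subintervals to place their contributing boxes in strictly increasing columns, no later operation can enlarge column $c$ of $\nu_l$, so the set of rows of length $\geq c$ at that moment coincides with the set of rows of length $\geq c$ in the final $\nu_l$. Hence this $-1$ reaches $r$ if and only if $c \leq |r|$. The same ordering ensures $\nu_l$ is frozen by the time any $(l-1)$-lower subinterval acts, so the analogous argument shows that a contributing box added to column $c$ of $\nu_{l-1}$ contributes $+1$ to $r$ if and only if $c \leq |r|$.

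Finally I would recognize which columns carry contributing boxes through the lane language. By Lemma \ref{lane col}, the entries of $L_c(l)$ are precisely the boxes of column $c$ of $\nu_l$ recorded in the order they are added, with an entry coming from an $m$-lower subinterval being noncontributing when $m > l$ and contributing when $m = l$. Because all higher $m$-components precede the $l$-component in $\alpha$ and because Lemma \ref{unif con}(4) allows at most one $l$-lower subinterval per column, any contributing entry of $L_c(l)$ must be its last entry; equivalently, column $c$ of $\nu_l$ carries a contributing box if and only if $L_c(l)$ ends at a right endpoint, so the number of such columns with $c \leq |r|$ equals $|V^l_r|$. The identical reasoning with $l$ replaced by $l-1$ identifies $|V^{l-1}_r|$ as the count of columns $c \leq |r|$ of $\nu_{l-1}$ carrying a contributing box. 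Combined with the previous paragraph, this yields a rigging of $-|V^l_r| + |V^{l-1}_r|$ for $r$. The main obstacle I anticipate is the time-dependence of the ``length $\geq c$'' condition, but the component ordering of cascading sequences together with Lemma \ref{unif con}(4) reduces it to the final-shape condition $c \leq |r|$, after which the result is pure counting.
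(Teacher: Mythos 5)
Your proof is correct and follows essentially the same route as the paper's: it attributes all rigging changes of a row of $\nu_l$ to contributing boxes in the columns of $\nu_l$ and $\nu_{l-1}$ that the row occupies, and identifies those columns with lanes ending at a right endpoint via Lemmas \ref{snake}, \ref{unif con}, and \ref{lane col}. The only difference is that you spell out in more detail the point the paper leaves implicit, namely that the component ordering of cascading sequences together with Lemma \ref{unif con}(\ref{single strip}) lets one replace the time-dependent ``rows of length $\geq c$ at the moment of addition'' by the final-shape condition $c \leq |r|$.
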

\begin{proof}
Lemma \ref{unif con} gives us that at most one contributing box can be added to a column. The rigging of $r$ is determined by the number of contributing boxes (negative contribution) that were added to the columns of the $l$th partition occupied by $r$ and the number of contributing boxes (positive contribution) that will have been added to the same columns of the $(l-1)$st partition; the former number corresponds to the term $-|V^l_r|$, while the latter number corresponds to the term $|V^{l-1}_r|$.
\end{proof}

For any entry $\chi$ of $\alpha$ with value $|\chi|$, if $\chi$ is the $v$th entry of the lane $L_u(|\chi|)$, we say that $\chi$ has \textbf{lane depth} $v$, $\chi$ has \textbf{lane number} $u$, and we also refer to $u$ as the \textbf{lane number} of $L_u(|\chi|)$. In Example \ref{lanes2}, the entry $8$ of $I_9$ has lane number $2$ and depth $3$.

\begin{lem}
\label{max rows}
$\nu_l$ has at most $\mathrm{maxr}_l := \min(n-l,l-1)+1 = \min(n-l+1,l)$ rows.
\end{lem}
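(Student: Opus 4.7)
The plan is to prove the two bounds $l(\nu_l) \le l$ and $l(\nu_l) \le n-l+1$ separately, so that their minimum gives the claimed $\mathrm{maxr}_l$.

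For $l(\nu_l) \le l$, I argue by induction. Remark \ref{bar rem} asserts $\overline{\nu_{l'+1}} \subset \nu_{l'}$ for $l' \in [n-1]$, which translates to $l(\nu_{l'+1}) \le l(\nu_{l'}) + 1$, or equivalently $l(\nu_l) \le l(\nu_{l-1}) + 1$ for $l \ge 2$. For the base case $l(\nu_1) \le 1$, I observe that $\mu_1$ receives boxes only from subintervals of the form $[1, m]$; in each, the first entry is $f_1$ (with $a = 1$), so $r_1$ is by definition the top row of $\mu_1$ and the box is added there. Hence $\nu_1$ is a single (possibly long) row, and induction yields $l(\nu_l) \le l$.

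For $l(\nu_l) \le n-l+1$, I pass to the lane description. By Lemma \ref{lane col}, the number of rows of $\nu_l$ equals $\max_u |L_u(l)|$, so it suffices to show $|L_u(l)| \le n-l+1$ for every $l$-lane. I claim that within any $l$-lane $L_u(l)$, the $m$-values of the subintervals contributing entries to the lane are pairwise distinct; since every such $m$ lies in $\{l, l+1, \ldots, n\}$, this yields the desired bound. The distinct-$m$-values claim reduces to the structural fact that within a single $m$-component of $\alpha$, each subinterval $I_b = [a_b, m]$ sends all of its entries to the same lane (namely $L_{d_1+1}$ at each value it hits, where $d_1$ is the number of preexisting lanes at value $a_b$), and distinct subintervals within the $m$-component occupy distinct lanes. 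Consequently a given $l$-lane can receive at most one entry per $m$-component.

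The main obstacle is verifying this structural fact about the $m$-component. The proof proceeds by induction on the position of $I_b$ within its $m$-component: at each value $v_k = a_b + k - 1$ encountered by $I_b$, one needs $|L_{d_1}(v_k)| > |L_{d_1+1}(v_k)|$ to force $d_k = d_1$ in the lane-forming procedure. The disparity is created because the earlier subintervals of the current $m$-component (with smaller starting values, by the nondecreasing-$a$ ordering) have extended $L_{d_1}(v_k)$ relative to $L_{d_1+1}(v_k)$. The technical heart of the argument is a careful bookkeeping of contributions from both the current $m$-component and all higher $m'$-components ($m' > m$) at each value $v_k$.
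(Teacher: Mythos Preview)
Your bound $l(\nu_l)\le l$ via Remark~\ref{bar rem} is correct and in fact a bit slicker than the paper's version, which instead tracks lane depths along a lower subinterval.

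The gap is in your $n-l+1$ bound. The structural fact you assert---that within a single $m$-component each subinterval $I_b=[a_b,m]$ sends \emph{all} of its entries to the fixed lane index $d_1+1$---is false. Look at Example~\ref{lanes2}: $I_5=(6,7,8,9)$ is in the $9$-component, and its entries land in lanes $L_2(6),L_1(7),L_1(8),L_1(9)$, so the lane index drops from $2$ to $1$. The reason your ``disparity'' argument breaks is that earlier $m'$-components (here the $10$-component, via $I_2,I_3$) can create $7$-lanes without creating matching $6$-lanes; then $|L_1(7)|=|L_2(7)|$ and the procedure is forced to take $d_2=0<d_1$. So you cannot conclude $d_k=d_1$ for all $k$.

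What you actually need is only the weaker statement that two distinct $m$-lower subintervals never contribute to the same $l$-lane (equivalently, consecutive entries of any $l$-lane come from strictly decreasing $m$-values). This is what the paper uses, and it follows not from the lane-forming combinatorics directly but from Lemma~\ref{snake}: if $I_1,I_2$ are $m$-lower subintervals with $I_1$ preceding $I_2$, the proof of Lemma~\ref{unif con}(\ref{single strip}) shows $|r'_k|>|r_k|$ for every $k\in[a_2,m]$, so the box $I_2$ adds to the $l$th partition sits in a strictly later column than the one $I_1$ adds. Replace your false claim with this argument and the proof goes through.
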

\begin{proof}
It suffices to consider $L_1(l)$, which corresponds to the first column of $\nu_l$. Each entry of $L_1(l)$ is an entry of some lower subinterval $I$ with $\min I \in [l]$ and $\max I \in [l,n]$. Suppose that $L_1(l)(j), L_1(l)(j+1)$ are contained in an $m_1$-lower subinterval, an $m_2$-lower subinterval respectively. By Lemma \ref{unif con} Property \ref{single strip}, we must have $m_1 \neq m_2$. Since the $m_1$-lower subintervals must precede the $m_2$-lower subintervals, it follows that $m_2 < m_1$. Hence we have $|L_1(l)| \leq |[l,n]| = n-l+1$. On the other hand, notice that $\nu_1$ can only be a single row, by Lemma \ref{snake}. By Lemma \ref{snake}, for any lower subinterval $I'$, the lane depth of entry $b+1$ of $I'$ exceeds the lane depth of entry $b$ of $I'$ by at most one. 
Thus, inductively we have $|L_1(l)| \leq |[l]| = l$ as well.
\end{proof}

\begin{lem}
\label{max cases}
If $l > \frac{n+1}{2}$, then $\mathrm{maxr}_l = n-l+1$. If $l \leq \frac{n+1}{2}$, then $\mathrm{maxr}_l = l$.
\end{lem}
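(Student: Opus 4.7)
The statement is essentially an unpacking of the formula $\mathrm{maxr}_l = \min(n-l+1, l)$ that was established in Lemma \ref{max rows}. So the plan is simply to determine, as a function of $l$, which of the two arguments of the $\min$ is the smaller one, by comparing $l$ and $n-l+1$ directly.

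First I would observe that the inequality $l > n-l+1$ is equivalent, after adding $l-1$ to both sides and dividing by $2$, to $l > \tfrac{n+1}{2}$. Therefore in the regime $l > \tfrac{n+1}{2}$ we have $n-l+1 < l$, so $\min(n-l+1, l) = n-l+1$, giving $\mathrm{maxr}_l = n-l+1$. Conversely, if $l \leq \tfrac{n+1}{2}$, then by the same equivalence $l \leq n-l+1$, so $\min(n-l+1, l) = l$ and $\mathrm{maxr}_l = l$.

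There is really no obstacle here; the content of the lemma is arithmetic bookkeeping that separates the two bounds on the column height $|L_1(l)|$ proved in Lemma \ref{max rows} into two explicit cases, according to whether the upper bound coming from $|[l,n]| = n-l+1$ or the upper bound coming from $|[l]| = l$ is tighter. The only thing worth being careful about is the boundary case $l = \tfrac{n+1}{2}$ (which can only occur when $n$ is odd), where both bounds coincide and either formulation gives the same value; I would explicitly note that this case falls under $l \leq \tfrac{n+1}{2}$ under our convention and is consistent with $n-l+1 = l$.
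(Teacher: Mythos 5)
Your proposal is correct and follows essentially the same route as the paper: since $\mathrm{maxr}_l$ is defined as $\min(n-l+1,l)$ in Lemma \ref{max rows}, both arguments reduce to the arithmetic comparison of $l$ with $n-l+1$, which the paper carries out by bounding each side against $\frac{n+1}{2}$ exactly as you do. Your remark about the boundary case $l = \frac{n+1}{2}$ is a harmless addition consistent with the paper's convention.
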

\begin{proof}
If $l > \frac{n+1}{2}$, then $n-l+1 < n-\frac{n+1}{2}+1 = \frac{n+1}{2} < l$. If $l \leq \frac{n+1}{2}$, then $n-l+1 \geq n - \frac{n+1}{2}+1 = \frac{n+1}{2} \geq l$. The claims then follow by Lemma \ref{max rows}.
\end{proof}

\begin{lem}
\label{nondec max row}
We have $\mathrm{maxr}_l \leq \mathrm{maxr}_{l-1}$ if and only if $l-1 > n-l$ (equivalently $l > \frac{n+1}{2}$).
\end{lem}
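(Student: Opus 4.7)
The plan is a direct case analysis using the closed-form expressions from Lemma \ref{max cases}. First I would record the algebraic equivalence $l - 1 > n - l \iff 2l > n + 1 \iff l > \tfrac{n+1}{2}$, so it suffices to prove $\mathrm{maxr}_l \leq \mathrm{maxr}_{l-1}$ if and only if $l > \tfrac{n+1}{2}$. Then I would compare $\mathrm{maxr}_l$ and $\mathrm{maxr}_{l-1}$ by splitting on which branch of Lemma \ref{max cases} each lies in; since the cutoff $\tfrac{n+1}{2}$ for $l-1$ shifts by one, this produces three configurations of $(l, l-1)$ to check.

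For the ``if'' direction, assume $l > \tfrac{n+1}{2}$, so $\mathrm{maxr}_l = n - l + 1$. I would split on $l-1$: (i) if $l - 1 > \tfrac{n+1}{2}$, then $\mathrm{maxr}_{l-1} = n - (l-1) + 1 = n - l + 2 > \mathrm{maxr}_l$; (ii) if $l - 1 \leq \tfrac{n+1}{2}$ (which forces $l - 1 = \tfrac{n+1}{2}$, possible only when $n$ is odd), then $\mathrm{maxr}_{l-1} = l - 1 = \tfrac{n+1}{2}$ while $\mathrm{maxr}_l = n - l + 1 = \tfrac{n-1}{2}$, again giving $\mathrm{maxr}_l < \mathrm{maxr}_{l-1}$. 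For the ``only if'' direction I would argue by contrapositive: if $l \leq \tfrac{n+1}{2}$, then also $l - 1 < \tfrac{n+1}{2}$, so Lemma \ref{max cases} yields $\mathrm{maxr}_l = l$ and $\mathrm{maxr}_{l-1} = l - 1$, hence $\mathrm{maxr}_l = l > l - 1 = \mathrm{maxr}_{l-1}$, contradicting $\mathrm{maxr}_l \leq \mathrm{maxr}_{l-1}$.

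There is no serious obstacle: the statement is a direct arithmetic consequence of Lemma \ref{max cases}. The only mild subtlety is remembering to handle the parity-dependent boundary subcase $l - 1 = \tfrac{n+1}{2}$ separately, since there the formula used for $\mathrm{maxr}_{l-1}$ switches branches relative to $\mathrm{maxr}_l$; this subcase still gives the desired strict inequality, so the combined analysis yields the iff cleanly.
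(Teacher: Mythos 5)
Your approach is the intended one: the paper states this lemma without proof, leaving it as an immediate arithmetic consequence of Lemma \ref{max cases}, and your case split on which branch of that lemma $l$ and $l-1$ fall into is exactly the right way to fill that in. One slip in your case (ii): the hypotheses $l > \tfrac{n+1}{2}$ and $l-1 \leq \tfrac{n+1}{2}$ do \emph{not} force $l-1 = \tfrac{n+1}{2}$; for $n$ even they force $l - 1 = \tfrac{n}{2}$, in which case $\mathrm{maxr}_{l-1} = l-1 = \tfrac{n}{2}$ and $\mathrm{maxr}_l = n-l+1 = \tfrac{n}{2}$, so the comparison is an equality rather than the strict inequality you assert. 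Since the lemma only claims $\mathrm{maxr}_l \leq \mathrm{maxr}_{l-1}$, this does not damage the conclusion, but the parenthetical ``possible only when $n$ is odd'' and the claimed strictness should be corrected to cover the even-$n$ boundary subcase.
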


\begin{exm}
Consider the element $$(7,8,9,10,7,8,9,10,8,9,10,6,7,8,9,6,7,8,9,7,8,9,5,6,7,8,5,6,7,8,7,8)$$ of $\overline{A}_{10}$, whose lower subintervals are $(7^1,8^1,9^1,10^1),$ $(7^2,8^2,9^2,10^2),$ $(8^3,9^3,10^3),$ $(6^1,7^1,8^1,9^1),$ $(6^2,7^2,8^2,9^2),$ $(7^3,8^3,9^3),$ $(5^1,6^1,7^1,8^1),$ $(5^2,6^2,7^2,8^2),$ $(7^4,8^4)$, where the lanes have been marked with superscripts. From this information, we can tell, for example, that the $9$th partition of this rigged configuration has exactly three columns of height $2$, the $8$th partition has exactly one column of height $2$, and the $7$th partition has height $1$ for both its third and fourth columns. 
\end{exm}

We can apply Lemmas \ref{lane col} and \ref{lane rig} to obtain the $l$th partition in the rigged configuration as well as its riggings, given the corresponding cascading $n$-sequence. We illustrate this in the following
\begin{exm}
In the corresponding rigged configuration in Example \ref{lanes1}, the $10$th partition is $$\small\young(\ \ )$$ with rigging $-2+2 = 0$, since $L_1(10), L_2(10)$ end at right endpoints (contributing $-1-1$ to the rigging) and since $L_1(9), L_2(9)$ also end at right endpoints (contributing $+1+1$ to the rigging). \\
The $9$th partition is $$\small\young(\ \ \ \ ,\ \ )$$ with rigging $-1$ for the second row and rigging $-2$ for the first row, since $L_1(9),$ $L_2(9),$ $L_3(9),$ $L_4(9)$ all end at right endpoints (contributing $-1-1$ to the rigging of the second row and $-1-1-1-1$ to the rigging of the first row) with $|L_1(9)| = |L_2(9)| = 2$ and $L_3(9) = L_4(9) = 1$, and since $L_1(8), L_4(8)$ end at right endpoints (contributing $+1$ to the rigging of the second row and $+1+1$ to the rigging of the first row). \\
Similarly, the $8$th partition is $$\small\young(\ \ \ \ ,\ \ \ ,\ )$$ with riggings $-1$ for the third row, $-2$ for the second row, and $-2$ for the first row.
\end{exm}
\begin{exm}
In Example \ref{lanes2}, the cascading $10$-sequence 
\begin{multline*}
(6,7,8,9,10,7,8,9,10,7,8,9,10,8,9,10,6,7,8,9,6,7,8,9,7,8,9,5,6,7,8,5,6,7,8, \\ 5,6,7,8,6,7,8)
\end{multline*}  
has lower subintervals and lanes $I_1 = (6^1,7^1,8^1,9^1,10^1),$ $I_2 = (7^2,8^2,9^2,10^2),$ $I_3 = (7^3,8^3,9^3,$ $10^3),$ $I_4 = (8^4,9^4,10^4),$ $I_5 = (6^2,7^1,8^1,9^1),$ $I_6 = (6^3,7^2,8^2,9^2),$ $I_7 = (7^4,8^3,9^3),$ $I_8 = (5^1,6^1,7^1,$ $8^1),$ $I_9 = (5^2,6^2,7^2,8^2),$ $I_{10} = (5^3,6^3,7^3,8^3),$ $I_{11} = (6^4,7^4,8^4)$, where lane $i$ has been marked with a superscript $i$. \\
Looking at these lanes, we can tell that
\begin{enumerate}
\item $\nu_{10}$ has four columns of length one, with $\mathrm{rig}_{10}^1 = -4 + 3 = -1$, since $L_1(10), \ldots,$ $L_4(10)$ end at right endpoints, and since $L_1(9), L_2(9), L_3(9)$ end at right endpoints
\item $\nu_9$ has three columns of length two, and one column of length one, with $\mathrm{rig}_9^2 = -3 + 3 = 0$ and $\mathrm{rig}_9^1 = -3 + 3 + 1 = 1$, since $L_1(9), L_2(9), L_3(9)$ end at right endpoints but $L_4(9)$ does not, and since $L_1(8), \ldots, L_4(8)$ end at right endpoints
\item $\nu_8$ has three columns of length three and one column of length two, with $\mathrm{rig}_8^3 = -3$ and $\mathrm{rig}_8^2 = \mathrm{rig}_8^1 = -3 - 1 = -4$, since $L_1(8), \ldots, L_4(8)$ end at right endpoints
\end{enumerate}
The rigged configuration (with $\nu_i$ in top-bottom order) in its entirety is \\
\begin{align*}
&\emptyset\\
\vspace{5 mm}
&\emptyset\\
\vspace{5 mm}
&\emptyset\\
\vspace{5 mm}
&\emptyset\\
\vspace{5 mm}
 &{
\begin{array}[t]{r|c|c|c|l}
 \cline{2-4} 0 &\phantom{|}&\phantom{|}&\phantom{|}& 0 \\
 \cline{2-4} 
\end{array}
} \\
\vspace{5 mm}
 &{
\begin{array}[t]{r|c|c|c|c|l}
 \cline{2-5} -1 &\phantom{|}&\phantom{|}&\phantom{|}&\phantom{|}& 0 \\
 \cline{2-5} -1 &\phantom{|}&\phantom{|}&\phantom{|}& \multicolumn{2 }{l}{ 0 } \\
 \cline{2-4} 
\end{array}
} \\
\vspace{5 mm}
 &{
\begin{array}[t]{r|c|c|c|c|l}
 \cline{2-5} -2 &\phantom{|}&\phantom{|}&\phantom{|}&\phantom{|}& 0 \\
 \cline{2-5}  &\phantom{|}&\phantom{|}&\phantom{|}&\phantom{|}& 0 \\
 \cline{2-5} -2 &\phantom{|}&\phantom{|}& \multicolumn{3 }{l}{ 0 } \\
 \cline{2-3} 
\end{array}
} \\
\vspace{5 mm}
 &{
\begin{array}[t]{r|c|c|c|c|l}
 \cline{2-5} -5 &\phantom{|}&\phantom{|}&\phantom{|}&\phantom{|}& -4 \\
 \cline{2-5}  &\phantom{|}&\phantom{|}&\phantom{|}&\phantom{|}& -4 \\
 \cline{2-5} -4 &\phantom{|}&\phantom{|}&\phantom{|}& \multicolumn{2 }{l}{ -3 } \\
 \cline{2-4} 
\end{array}
} \\
\vspace{5 mm}
 &{
\begin{array}[t]{r|c|c|c|c|l}
 \cline{2-5} 1 &\phantom{|}&\phantom{|}&\phantom{|}&\phantom{|}& 1 \\
 \cline{2-5} 0 &\phantom{|}&\phantom{|}&\phantom{|}& \multicolumn{2 }{l}{ 0 } \\
 \cline{2-4} 
\end{array}
} \\
\vspace{5 mm}
 &{
\begin{array}[t]{r|c|c|c|c|l}
 \cline{2-5} -1 &\phantom{|}&\phantom{|}&\phantom{|}&\phantom{|}& -1 \\
 \cline{2-5} 
\end{array}
}
\end{align*}
\end{exm}

Lastly, the following theorem imposing constraints (in a recursive manner, starting from the last partition) on the range of possible legitimate ${\cal B}(\infty)$ rigged configurations of type $A$ also follows from Lemma \ref{snake} (which states that at most one noncontributing box can be added to each column), Lemma \ref{unif con} Property \ref{single strip} (which states that at most one contributing box can be added to each column), and Lemma \ref{max rows}. This result is the first half of our classification of rigged configurations. For convenience of description, we will regard $\overline{\nu_m}$ as having $|\nu_m^1| - |\nu_m^2|$ columns of height zero to the right of its first row (i.e. $\overline{\nu_m}$ has an empty row of length $|\nu_m^1| - |\nu_m^2|$ to the right of its first row).
\begin{thm}
\label{urconstraints}
$\nu_{m-1}$ is obtained from $\nu_m$ in three stages:
\begin{enumerate}
\item Add at most one noncontributing box to each column of $\overline{\nu_m}$, resulting in a partition $\widehat{\nu_{m-1}}$.
\item Add at most one contributing box to each column of $\widehat{\nu_{m-1}}$, resulting in a partition $\widehat{\nu_{m-1}}'$.
\item Finally add a number of contributing boxes to the first row of $\widehat{\nu_{m-1}}'$.
\end{enumerate}
In this process, any column of $\overline{\nu_m}$ with height $\max((\overline{\nu_m})^t)$ must receive at most $\min(\mathrm{maxr}_{m-1} - \max((\overline{\nu_m})^t),2)$ boxes.
\end{thm}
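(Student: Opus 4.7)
The plan is to trace the chronological action of the cascading sequence $\alpha$ on $\mu_{m-1}$, exploiting the fact that the lower subintervals of $\alpha$ appear in order of nonincreasing right endpoint. Only subintervals with right endpoint $\geq m-1$ and left endpoint $\leq m-1$ touch $\mu_{m-1}$; among these, every subinterval with right endpoint $\geq m$ acts strictly before any $(m-1)$-subinterval. Define $\widehat{\nu_{m-1}}$ to be the state of $\mu_{m-1}$ at the moment right after every subinterval with right endpoint $\geq m$ has acted, so that the $(m-1)$-subintervals subsequently transform $\widehat{\nu_{m-1}}$ into $\nu_{m-1}$.

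For Stage $1$, Lemma~\ref{snake} Property~\ref{p three} tells us that every subinterval with right endpoint $\geq m$ and left endpoint $\leq m-1$ contributes a noncontributing box to $\mu_{m-1}$. When all such subintervals have finished, $\mu_m$ has reached its final state $\nu_m$ (no later subinterval touches $\mu_m$) and $\mu_{m-1} = \widehat{\nu_{m-1}}$. Lemma~\ref{snake} Property~\ref{p one}, together with Remark~\ref{bar rem}, gives $\overline{\nu_m} \subset \widehat{\nu_{m-1}} \subset \nu_m$. Since $\nu_m$ is obtained from $\overline{\nu_m}$ by adding exactly its top row (using the convention that $\overline{\nu_m}$ carries height-$0$ columns out to the width of $\nu_m$'s first row), each column of $\nu_m$ exceeds the corresponding column of $\overline{\nu_m}$ by exactly one box; hence $\widehat{\nu_{m-1}}$ is obtained from $\overline{\nu_m}$ by adding at most one noncontributing box per column, establishing Stage~$1$.

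For Stages $2$ and $3$, each $(m-1)$-subinterval next adds exactly one contributing box to $\mu_{m-1}$ by Lemma~\ref{snake} Property~\ref{p three}. Applying the argument of Lemma~\ref{unif con} Property~\ref{single strip} with $m$ replaced by $m-1$ shows these contributing boxes land in strictly increasing columns, so at most one contributing box per column. Since the columns of $\widehat{\nu_{m-1}}$ form an initial segment $\{1,2,\ldots,L\}$ of column indices, the strictly increasing sequence of target columns splits into a prefix contained in $\{1,\ldots,L\}$ (Stage~$2$, producing $\widehat{\nu_{m-1}}'$) followed by a suffix contained in $\{L+1, L+2, \ldots\}$ (Stage~$3$). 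Moreover, Lemma~\ref{snake} Property~\ref{p three} forces each contributing box to extend column $|r_{m-1}|+1$ for some preexisting row $r_{m-1}$ of $\mu_{m-1}$, which caps the target column by one plus the current first-row length; combined with strict increase, the Stage~$3$ targets must be $L+1, L+2, \ldots$ consecutively, each creating a single-box column and thus extending the first row of $\widehat{\nu_{m-1}}'$ rightward.

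For the constraint, a column of $\overline{\nu_m}$ of maximal height $h_{\max} := \max((\overline{\nu_m})^t)$ can receive at most one noncontributing box in Stage~$1$ and at most one contributing box in Stage~$2$, while Stage~$3$ only creates new columns to the right, giving at most $2$ boxes in total. On the other hand, Lemma~\ref{max rows} bounds every column of $\nu_{m-1}$ by $\mathrm{maxr}_{m-1}$ in height, so at most $\mathrm{maxr}_{m-1} - h_{\max}$ boxes can be appended to such a column; taking the minimum yields the required $\min(\mathrm{maxr}_{m-1} - h_{\max}, 2)$. The main obstacle I anticipate is the clean Stage~$2$--Stage~$3$ separation: verifying rigorously that the strict-increase property for contributing-box columns, together with the partition-shape constraint on where a Kashiwara operator can next place a box, forces all extensions of existing $\widehat{\nu_{m-1}}$-columns to occur before any new column is created, and forces new columns to appear consecutively so that Stage~$3$ truly corresponds to appending to the first row.
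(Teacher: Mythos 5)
Your proof is correct and uses exactly the ingredients the paper itself invokes for this theorem (Lemma \ref{snake} Property \ref{p three} with the containments of Property \ref{p one} for the noncontributing stage, Lemma \ref{unif con} Property \ref{single strip} for the strictly increasing columns of contributing boxes, and Lemma \ref{max rows} for the height cap), merely supplying the chronological bookkeeping that the paper leaves implicit. The Stage 2/Stage 3 separation you flag as a possible obstacle is in fact already settled by the argument you give: strict column increase plus one-contributing-box-per-column forces all new columns to come last and to appear consecutively along the first row.
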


We will use the following restatement extensively.

\begin{thm}[Restatement of Theorem \ref{urconstraints}]
\label{constraints}
$\nu_{m-1}$ is obtained from $\nu_m$ by adding at most two boxes to each column of $\overline{\nu_m}$, the first box being noncontributing and the second box being contributing. Moreover, we have the following constraints:
\begin{enumerate}
\item \label{`} At most $\min(\mathrm{maxr}_{m-1} - \max((\overline{\nu_m})^t),2)$ boxes can be added to any column of height $\max((\overline{\nu_m})^t)$.
\item At most one box can be added to the $d$th column for $d > |\nu_m^1|$, and this box must be contributing.
\item In any row of $\nu_{m-1}$, no contributing box precedes a noncontributing box.
\end{enumerate}
\end{thm}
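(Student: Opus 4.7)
The plan is to decompose the action on $\mu_{m-1}$ into two temporally separated phases and apply the lemmas already proved to each. Let Phase A be the collection of all lower subintervals of $\alpha$ whose maximum is at least $m$ and whose range contains $m-1$ (so they act on $\mu_{m-1}$), and let Phase B be the collection of all $(m-1)$-lower subintervals of $\alpha$. Because the cascading order places the $n, n-1, \ldots, m$-components before the $(m-1)$-component, every Phase A subinterval acts before every Phase B subinterval. Lemma \ref{snake} Property \ref{p three} then tells us that each Phase A box deposited in $\mu_{m-1}$ is noncontributing, while each Phase B box deposited there is contributing.

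I would first analyze Phase A. Let $\mu_{m-1}^A$ denote the state of $\mu_{m-1}$ at the moment the $m$-component has just finished acting; at this moment $\mu_m$ has already reached its final value $\nu_m$ and will never grow again. Applying Lemma \ref{snake} Property \ref{p one} at this snapshot (and noting that any $m$-singletons in the $m$-component only extend the top row of $\mu_m$ and so leave both $\overline{\mu_m}$ and $\mu_{m-1}$ unchanged) yields the sandwich $\overline{\nu_m} \subset \mu_{m-1}^A \subset \nu_m$. Reading this column by column, together with the padding convention on $\overline{\nu_m}$, shows that Phase A adds either zero or one noncontributing box to each column $c \leq |\nu_m^1|$ of $\overline{\nu_m}$ and adds nothing to any column $c > |\nu_m^1|$.

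For Phase B, I would invoke Lemma \ref{unif con} Property \ref{single strip} with $m$ replaced by $m-1$: successive $(m-1)$-lower subintervals place their contributing boxes into strictly increasing columns of $\mu_{m-1}$, so Phase B contributes at most one box per column. Combining the two phases yields the main clause of the theorem together with Constraint 2: each column $c \leq |\nu_m^1|$ of $\overline{\nu_m}$ receives at most one noncontributing box followed by at most one contributing box, and each column $c > |\nu_m^1|$ receives at most one box, which is necessarily contributing.

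For the two remaining constraints, Constraint 1 follows by combining the universal upper bound of $2$ with Lemma \ref{max rows}: a column of $\overline{\nu_m}$ of height $\max((\overline{\nu_m})^t)$ can grow to a column of $\nu_{m-1}$ of at most $\mathrm{maxr}_{m-1}$ rows, so the number of boxes added is at most $\mathrm{maxr}_{m-1} - \max((\overline{\nu_m})^t)$. For Constraint 3, I would observe that each box added to a fixed row $r$ extends $r$ to the right by exactly one column; hence the Phase A boxes landing in $r$ occupy a contiguous block of columns immediately to the right of $\overline{\nu_m}$'s portion of $r$, and the later Phase B boxes continue strictly to the right of that block. Since Phase A boxes are noncontributing and Phase B boxes are contributing, no contributing box in $r$ can precede a noncontributing one. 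The main subtlety in the whole argument is establishing the Phase A sandwich: one has to verify that Lemma \ref{snake}'s invariant $\overline{\mu_m} \subset \mu_{m-1} \subset \mu_m$ survives the $m$-singletons (which only touch the top row of $\mu_m$), so that it holds in the form $\overline{\nu_m} \subset \mu_{m-1}^A \subset \nu_m$ exactly at the end of the $m$-component, which is where the whole two-phase decomposition gets its teeth.
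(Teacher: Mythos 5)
Your proposal is correct and follows essentially the same route as the paper, which justifies this theorem only by citing Lemma \ref{snake} (at most one noncontributing box per column, via the sandwich $\overline{\mu_m}\subset\mu_{m-1}\subset\mu_m$), Lemma \ref{unif con} Property \ref{single strip} (at most one contributing box per column), and Lemma \ref{max rows}; your Phase A/Phase B split is exactly the decomposition into subintervals with maximum at least $m$ versus the $(m-1)$-lower subintervals that underlies those citations. Your explicit treatment of the snapshot at the end of the $m$-component (checking that the sandwich survives the $m$-singletons and persists after the last such subinterval acts) supplies detail the paper leaves implicit, but it is not a different argument.
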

\begin{rem}
Item \ref{`} simply states that the resulting $(m-1)$st partition cannot have more rows than $\mathrm{maxr}_{m-1}$. Any box added to a column of $\overline{\nu_m}$ with height zero will be in the first row of the resulting partition. Needless to say, there cannot be any gaps between the boxes added to any row, as the result would not be a valid partition.

Even though we have not specified the rigging of $\nu_m$ here, this theorem gives us the ``at most two boxes to each column'' constraint. Precisely how the rigging of $\nu_m$ constrains $\nu_{m-1}$ will be handled in later sections. 
\end{rem}

\subsection{Rough Idea of the Algorithm}
\label{roughidea}
Given an $m$-lower subinterval $I = (a,a+1,\ldots,m)$, we say that the lower subinterval $I_+ = (a-1,a,a+1,\ldots,m)$ is the \textbf{lengthening} of $I$, and we say that we \textbf{lengthen} $I$ to obtain $I_+$.

Our characterization for the $A_n$ rigged configurations will be an algorithm for growing rigged configurations starting from the last ($n$th) rigged partition; this growth algorithm can determine whether any given $n$-tuple of rigged partitions is a legitimate $A_n$ rigged configuration. In other words, given the last partition (which consists of a row with any number of boxes), we can give the range of all possible $(n-1)$st partitions and its riggings. In general, given the $n$th, $(n-1)$st, ..., $(n-i)$th partitions, we can give the range of all possible $(n-i-1)$st partitions and its riggings.

Growing the rigged configuration in our algorithm corresponds to growing its corresponding cascading $n$-sequence. Note that any cascading $n$-sequence can be constructed by first adding copies of $n$ to the (initially empty) string, then copies of $n-1$ to the string, then copies of $n-2$ to the string, and so on, such that we have a cascading $n$-sequence at each stage. It follows \textit{a fortiori} that any cascading $n$-sequence can be constructed by first adding copies of $i \leq n$ to the (initially empty) string, then copies of $i \leq n-1$ to the string, then copies of $i \leq n-2$ to the string, and so on, such that we have a cascading $n$-sequence at each stage. Hence any $A_n$ rigged configuration can be constructed (by applying the Kashiwara operators in the order of the cascading $n$-sequence at each stage) via this type of iterative process, which constructs the $n$th partition, $(n-1)$st partition, $(n-2)$nd partition, and so on, in that order. What we need to do is to fine tune this process so that the already constructed $n$th, $(n-1)$st, ..., $(n-i)$th partitions and their riggings do not change when we construct the $(n-i-1)$st partition. More precisely, at the $i$th stage, we will add all the copies of $n-i+1$ along with minimal copies of $j < n-i+1$ necessary to preserve the previously constructed rigged partitions; we will elaborate on this in the next few subsections.

\begin{rem}
We mention that, by Lemma \ref{lane col}, if $\nu$ is the $l$th partition in the rigged configuration, then the stretch $\tilde{\nu}^b$ corresponds to the set $\{L_i(l) | |L_i(l)| = b\}$. In other words, the stretch $\tilde{\nu}^b$ corresponds to the set of $l$-lanes of length $b$, or equivalently the set of columns of $\nu$ with height $b$.
\end{rem}

Let $R = (\nu_1,\nu_2,\ldots,\nu_n)$ be a rigged configuration we want to construct by our growth algorithm. To construct the compatible rigged partition $\nu_{i-1}$ given that we have already constructed $\nu_i, \nu_{i+1}, \ldots, \nu_n$, where the riggings of $\nu_i, \nu_{i+1}, \ldots, \nu_n$ are fixed, we will add noncontributing boxes and contributing boxes beneath the stretches of $\overline{\nu_i}$ (which has zero riggings by default). Roughly speaking, at most two rows of boxes will be added beneath each stretch of $\overline{\nu_i}$, with the first row consisting of noncontributing boxes and the second row consisting of contributing boxes. This is justified by Theorem \ref{constraints}. Of course, the contributing boxes added beneath each stretch of $\overline{\nu_i}$ must account for the riggings of $\nu_i$, by Lemma \ref{lane rig}. Before describing how to add boxes to $\overline{\nu_i}$, we need the notion of \textit{plateaus} to delineate the stretches of a rigged partition to which boxes can be added.

\subsection{Plateaus as Base for Construction}
\label{plateaus}
\begin{mydef}
We say that a cascading sequence $\beta$ (as well as its corresponding rigged configuration) is a \textbf{$(p,q,r)$-plateau} if it satisfies the following property: 
\begin{enumerate}
\item \label{noncon free} For every $i$ for which $L_i(p)$ exists, we have $|L_i(p-1)| = |L_i(p)|-1$ whenever  $|L_i(p-1)| < q$.
\item For every $(p-1)$-lane $L_i(p-1)$ with $|L_i(p-1)| \leq r$, $L_i(p-1)$ does not end at a right endpoint.
\item For any $k < p-1$ no $k$-lane ends at a right endpoint.
\end{enumerate}
If the above property holds for $q = \infty$ and $r = \infty$, then we call $\beta$ a \textbf{$p$-plateau}. If $\beta$ is an $m$-plateau for every $m \in [p]$, then we call $\beta$ a \textbf{$p^*$-plateau}; here (and in what follows) we will exclude the case $m=1$ from consideration, as the letter $0$ does not occur in $\beta$.
\end{mydef}

\begin{lem}
\label{mesa}
If $\beta$ is a $p^*$-plateau corresponding to the rigged configuration $(\mu_1,\mu_2,\ldots,$ $\mu_n)$, then $\mu_{l-1} = \overline{\mu_l}$ with zero riggings for $l \in [p]$.
\end{lem}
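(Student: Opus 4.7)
The plan is to fix $l \in [p]$ with $l \geq 2$ (the case $l = 1$ being excluded by the convention in the $p^*$-plateau definition) and exploit that $\beta$ is simultaneously an $l$-plateau: condition 1 will pin down the shape $\mu_{l-1} = \overline{\mu_l}$, while conditions 2 and 3 will force the riggings of $\mu_{l-1}$ to vanish. The shape half uses Lemma \ref{lane col} combined with Theorem \ref{constraints}; the rigging half uses Lemma \ref{lane rig}.

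For the shape, condition 1 of the $l$-plateau property (with $q = \infty$) states that $|L_i(l-1)| = |L_i(l)| - 1$ for every $i$ such that $L_i(l)$ exists. By Lemma \ref{lane col}, this matches $\mu_{l-1}$ column-by-column with $\overline{\mu_l}$ on every column already present in $\mu_l$. What I expect to be the main obstacle is ruling out \emph{extra} columns of $\mu_{l-1}$ lying beyond the width of $\mu_l$. I would handle this by contradiction: if some $L_i(l-1)$ were nonempty with $i > |\mu_l^1|$, then Theorem \ref{constraints} forces $|L_i(l-1)| = 1$ with its single box being contributing, so the unique (hence last) entry of $L_i(l-1)$ sits at a right endpoint of its lower subinterval, i.e., the $(l-1)$-lane $L_i(l-1)$ ends at a right endpoint. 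This contradicts condition 2 of the $l$-plateau property (with $r = \infty$). Thus $\mu_{l-1}$ has no columns beyond those of $\mu_l$, yielding $\mu_{l-1} = \overline{\mu_l}$ as partitions.

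For the riggings, Lemma \ref{lane rig} writes the rigging of each row $r$ of $\mu_{l-1}$ as $-|V_r^{l-1}| + |V_r^{l-2}|$. Condition 2 of the $l$-plateau gives $V_r^{l-1} = \emptyset$ directly, since no $(l-1)$-lane ends at a right endpoint. For $V_r^{l-2}$: when $l \geq 3$, condition 3 of the $l$-plateau says no $(l-2)$-lane ends at a right endpoint, so $V_r^{l-2} = \emptyset$; when $l = 2$ there are no $0$-lanes at all (since $0$ does not occur in $\beta$), so $V_r^{l-2} = \emptyset$ trivially. In either case the rigging of $r$ is $0$, completing the lemma.
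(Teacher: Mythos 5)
Your proof is correct and follows the same overall skeleton as the paper's: condition 1 of the $l$-plateau pins down the columns of $\mu_{l-1}$ over the columns of $\mu_l$ via Lemma \ref{lane col}, and the riggings vanish by Lemma \ref{lane rig} together with conditions 2 and 3 (your reading of ``with zero riggings'' as applying to $\mu_{l-1}$ is the right one, and your handling of the $l=2$ boundary case is fine). The one place where you diverge is in excluding columns of $\mu_{l-1}$ that lie beyond the width of $\mu_l$: the paper first proves the auxiliary fact that a right endpoint can only occur as the \emph{last} entry of a lane (by noting that any later lower subinterval containing $l$ must have $l$ as its right endpoint and, by Lemma \ref{unif con}, must deposit it in a lane of strictly larger index), and then concludes directly from the plateau conditions; you instead invoke Theorem \ref{constraints}, item 2, to see that any such extra column is a single contributing box, whose lane therefore trivially ends at a right endpoint, contradicting condition 2. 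Both routes are valid and non-circular (Theorem \ref{constraints} is established before the plateau machinery); the paper's version is self-contained within the lane formalism and records a reusable fact about right endpoints, while yours is shorter by leaning on the already-proved structural constraint.
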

\begin{proof}
First we show that a right endpoint can only exist at the end of a lane. Suppose a right endpoint occurs in an $l$-lane $L_j(l)$, and let $I$ denote the lower subinterval containing this right endpoint. By the definition of cascading sequences, the only lower subintervals after $I$ containing $l$ as an entry must have $l$ as a right endpoint. However, any lower subinterval after $I$ that contains $l$ as a right endpoint must add its right endpoint to a lane $L_k(l)$ where $k>j$, by Lemma \ref{unif con}.

Since right endpoints can only occur at the end of a lane, we have $\mu_{l-1} = \overline{\mu_l}$ by the definition of $p^*$-plateau, and we have that $\mu_l$ has zero riggings by Lemma \ref{lane rig}, for $l \in [p]$
\end{proof}

\begin{exm}
\label{ex plat}
The cascading sequence consisting of lower subintervals $(7^1,8^1,9^1)$, $(8^2,9^2)$, $(8^3,9^3)$, $(9^4)$, $(6^1,7^1,8^1)$, $(7^2,8^2)$, $(8^4)$ is a $7^*$-plateau and a $(8,2,\infty)$-plateau. The cascading sequence consisting of the lower subintervals $(7^1,8^1,9^1,10^1)$, $(7^2,8^2,9^2,$ $10^2)$, $(8^3,9^3,10^3)$, $(8^4,9^4,10^4)$, $(8^5,9^5,10^5)$, $(6^1,7^1,8^1,9^1)$, $(7^3,8^2,9^2)$, $(5^1,6^1,7^1,8^1)$, $(6^2,7^2,8^2)$, $(6^3,7^3,8^3)$, $(5^2,$ $6^2,7^2)$ is a $7^*$-plateau and an $(8,2,2)$-plateau.
\end{exm}

\begin{rem}
As a start, notice that the cascading sequence consisting of the (singleton) lower subintervals $(n),(n),\ldots,(n)$ is an $n^*$-plateau.
\end{rem}

We now present procedures for adding at most two boxes to each column of the $(p-1)$st partition of a rigged configuration that is both a $(p,q,r)$-plateau and a $(p-1)^*$-plateau, generating all possible $(p-1)$st partitions compatible with the predetermined $p$th, $(p+1)$st, \ldots, $n$th partitions. Rigged configurations that are both a $(p,q,r)$-plateau and a $(p-1)^*$-plateau will serve as the ``skeletons'' upon which boxes are added in our growth algorithm.

\subsection{Adding Boxes to a Stretch}
\label{addbox}
Since any stretch $s$ of a partition $\lambda$ corresponds to all columns of some fixed height $\mathrm{ht}(s)$, we will also refer to $\mathrm{ht}(s)$ as the \textbf{height} of the stretch $s$.

\begin{conv}
\label{invisible}
For any $A_n$ rigged configuration $R' = (\nu_1,\nu_2,\ldots,\nu_n)$, by Remark \ref{bar rem} we already know that $\nu_{i-1} \supset \overline{\nu_i}$. If we label the stretches of $\nu_i$ from bottom to top by $g_1, g_2, \ldots, g_k$, and the stretches of $\overline{\nu_i}$ from bottom to top by $g'_1, g'_2, \ldots, g'_{k}$, then clearly $g'_i$ is identical to $g_i$ for $i \in [k-1]$. Here $g'_k$ is identical to $g_k$ if $|\nu^1_i| = |\nu^2_i|$, and is empty if $|\nu^1_i| > |\nu^2_i|$. In the case $|\nu^1_i| > |\nu^2_i|$, we will refer to $g'_k$ as an ``invisible'' stretch above the first row of $\overline{\nu_i}$, with $|g'_k| = |g_k|$. Thus, in either case, we will regard $\overline{\nu_i}$ as having identical copies of all the stretches of $\nu_i$; this will be convenient for when we talk about adding boxes to $\overline{\nu_i}$ to form $\nu_{i-1}$, where a box added beneath $g'_k$ will be in the first row of the resulting partition in the case $|\nu^1_i| > |\nu^2_i|$.
\end{conv}

Now, fix cascading sequence $\beta$ that is both a $(p-1)^*$-plateau and a $(p,q,r)$-plateau, with corresponding rigged configuration $R = (\mu_1,\mu_2,\ldots,\mu_n)$. We give two procedures for adding respectively noncontributing boxes and contributing boxes beneath the stretches $z_1, z_2, \ldots, z_a$ of $\mu_{p-1}$ (ordered from bottom to top, following Convention\ref{invisible}), which fixes $\mu_x$ for all $x > p$ and also fixes the shape of $\mu_p$ (though not necessarily the rigging, which will depend on the resulting $(p-1)$st partition). Assume that $\mathrm{ht}(z_1) < \mathrm{maxr}_{p-1}$; otherwise no boxes can be added beneath $z_1$. Both procedures output both the desired cascading sequence and its corresponding rigged configuration, and can be applied repeatedly to add boxes to multiple stretches sequentially.

\begin{proc}[Adding Noncontributing Boxes to a Given Stretch]
\label{addnoncon}
Suppose $\mathrm{ht}(z_i) < \min(q,r)$. The following algorithm adds $n_i$ noncontributing boxes beneath $z_i$, where $0 \leq n_i \leq |z_i|$:

We will add $n_i$ copies of $p-1$, $n_i$ copies of $p-2$, $\ldots$, $n_i$ copies of $p-(\mathrm{ht}(z_i)+1)$ to $\beta$ as follows. Let $B_{v}$ denote the set of lower subintervals with head $v$. We will delete a number of elements from each $B_v$ before lengthening the first $n_i$ of the remaining elements.

Label from left to right by $\breve{I}_1^{p-\mathrm{ht}(z_i)-1}$, $\breve{I}_2^{p-\mathrm{ht}(z_i)-1}$, $\ldots$, $\breve{I}_w^{p-\mathrm{ht}(z_i)-1}$ the elements of $B_{p-\mathrm{ht}(z_i)-1}$. Then delete the element $\breve{I}_1^{p-\mathrm{ht}(z_i)} \in B_{p-\mathrm{ht}(z_i)}$ nearest $\breve{I}_1^{p-\mathrm{ht}(z_i)-1}$ left of $\breve{I}_1^{p-\mathrm{ht}(z_i)-1}$, delete the element $\breve{I}_2^{p-\mathrm{ht}(z_i)} \in B_{p-\mathrm{ht}(z_i)}$ nearest $\breve{I}_2^{p-\mathrm{ht}(z_i)-1}$ left of $\breve{I}_2^{p-\mathrm{ht}(z_i)-1}$, $\ldots$, delete the element $\breve{I}_w^{p-\mathrm{ht}(z_i)} \in B_{p-\mathrm{ht}(z_i)}$ nearest $\breve{I}_w^{p-\mathrm{ht}(z_i)-1}$ left of $\breve{I}_w^{p-\mathrm{ht}(z_i)-1}$. Let $\overline{B}_{p-\mathrm{ht}(z_i)}$ denote the subset obtained from $B_{p-\mathrm{ht}(z_i)}$ after performing this sequence of deletions. Next, delete the element $\breve{I}_1^{p-\mathrm{ht}(z_i)+1} \in B_{p-\mathrm{ht}(z_i)+1}$ nearest $\breve{I}_1^{p-\mathrm{ht}(z_i)}$ left of $\breve{I}_1^{p-\mathrm{ht}(z_i)}$, delete the element $\breve{I}_2^{p-\mathrm{ht}(z_i)+1} \in B_{p-\mathrm{ht}(z_i)+1}$ nearest $\breve{I}_2^{p-\mathrm{ht}(z_i)}$ left of $\breve{I}_2^{p-\mathrm{ht}(z_i)}$, $\ldots$, delete the element $\breve{I}_w^{p-\mathrm{ht}(z_i)+1} \in B_{p-\mathrm{ht}(z_i)+1}$ nearest $\breve{I}_w^{p-\mathrm{ht}(z_i)}$ left of $\breve{I}_w^{p-\mathrm{ht}(z_i)}$. Let $\overline{B}_{p-\mathrm{ht}(z_i)+1}$ denote the subset obtained from $B_{p-\mathrm{ht}(z_i)+1}$ after performing this sequence of deletions. In general, delete the element $\breve{I}_1^{p-\mathrm{ht}(z_i)+c} \in B_{p-\mathrm{ht}(z_i)+c}$ nearest $\breve{I}_1^{p-\mathrm{ht}(z_i)+c-1}$ left of $\breve{I}_1^{p-\mathrm{ht}(z_i)+c-1}$, delete the element $\breve{I}_2^{p-\mathrm{ht}(z_i)+c} \in B_{p-\mathrm{ht}(z_i)+c}$ nearest $\breve{I}_2^{p-\mathrm{ht}(z_i)+c-1}$ left of $\breve{I}_2^{p-\mathrm{ht}(z_i)+c-1}$, $\ldots$, delete the element $\breve{I}_w^{p-\mathrm{ht}(z_i)+c} \in B_{p-\mathrm{ht}(z_i)+c}$ nearest $\breve{I}_w^{p-\mathrm{ht}(z_i)+c-1}$ left of $\breve{I}_w^{p-\mathrm{ht}(z_i)+c-1}$. Let $\overline{B}_{p-\mathrm{ht}(z_i)+c}$ denote the subset obtained from $B_{p-\mathrm{ht}(z_i)+c}$ after performing this sequence of deletions.

Finally, lengthen the first $n_i$ elements (in left-right order as usual) of $\overline{B}_{p-\mathrm{ht}(z_i)+d}$ in $\beta$, for $d = 0, 1, \ldots, \mathrm{ht}(z_i)$. 
\end{proc}
\begin{notn}
Let $C_{\mathrm{ht}(z_i)}$ denote the set of lower subintervals $\breve{I}_j^{p-\mathrm{ht}(z_i)+c}$ of $\beta$ deliberately fixed (i.e. not lengthened) in Procedure \ref{addnoncon}. We will call $C_{\mathrm{ht}(z_i)}$ the set of \textbf{deleted elements} of $\beta$, or the set of \textbf{fixed elements} of $\beta$.
\end{notn}
\begin{rem}
In Procedure \ref{addnoncon}, we say that the element $\breve{I}_u^{p-\mathrm{ht}(z_i)+c}$ is \textbf{paired} with the element $\breve{I}_u^{p-\mathrm{ht}(z_i)+c-1}$, for each $u \in [w]$. This pairing process used to obtain $C_{\mathrm{ht}(z_i)}$ is in fact the same pairing/bracketing process in the definition of the Kashiwara operator. 

A rough illustration of the pairing/bracketing in Procedure \ref{addnoncon}: If we let $a$ denote a lower subinterval with head $j$ and $b$ denote a lower subinterval with head $j-1$ ($a$ and $b$ are used as shorthand here; the $a$'s (resp. $b$'s) are not necessarily identical), and if we let $aaabbaabbabaa$ be the cascading subsequence whose lower subintervals have only $j$ or $j-1$ as head, then the pairing and deletion process in Procedure \ref{addnoncon} works as follows.

$aaabbaabbabaa \rightarrow aa(ab)ba(ab)b(ab)aa \rightarrow a(ab)(ab)aa \rightarrow aaa$ (this means that only the remaining lower subintervals $aaa$ can be lengthened)
\end{rem}

That Procedure \ref{addnoncon} works as described will be proven in Section \ref{proof}. Meanwhile, let us look at some examples of how this procedure works.

\begin{exm}
Consider the cascading sequence $\alpha$ (with lanes marked by superscripts as usual) consisting of the lower subintervals $(8^1,9^1,10^1,11^1)$, $(8^2,9^2,10^2,11^2)$, $(8^3,9^3,10^3,11^3)$, $(7^1,8^1,$ $9^1,10^1)$, $(7^2,8^2,9^2,10^2)$, $(8^4,9^4,10^4)$, $(6^1,7^1,8^1,9^1)$, $(7^3,8^3,9^3)$, which is an $8^*$-plateau. We have $$\mu_7 = \begin{array}[t]{r|c|c|c|l}
 \cline{2-4} 0 &\phantom{|}&\phantom{|}&\phantom{|}& 0 \\
 \cline{2-4} 0 &\phantom{|}& \multicolumn{3 }{l}{ 0 } \\
 \cline{2-2} 
\end{array}.$$
To add two noncontributing boxes to the second stretch of $\mu_7$, we add two copies of $7$ and two copies of $6$ to $\alpha$; $C_1$ in this case consists of the lower subintervals $(6^1,7^1,8^1,9^1)$, $(7^2,8^2,9^2,10^2)$, $(8^3,9^3,10^3,11^3)$. The resulting cascading sequence $\alpha'$ (where the added copies are in bold) consisting of the lower subintervals $(\textbf{7}^1,8^1,9^1,10^1,11^1)$, $(\textbf{7}^2,8^2,9^2,10^2,11^2)$, $(8^3,9^3,10^3,11^3)$, $(\textbf{6}^1,7^1,8^1,9^1,10^1)$, $(7^3,8^2,9^2,10^2)$, $(8^4,9^4,10^4)$, $(6^2,7^2,8^1,9^1)$, $(\textbf{6}^3,7^3,8^3,9^3)$ corresponds to the resulting rigged configuration whose seventh partition is 
\begin{array}[t]{r|c|c|c|l}
 \cline{2-4} -2 &\phantom{|}&\phantom{|}&\phantom{|}& 0 \\
 \cline{2-4}  &\phantom{|}&\phantom{|}&\phantom{|}& 0 \\
 \cline{2-4} 
\end{array}.
\end{exm}

\begin{exm}
\label{examp}
Consider the cascading sequence $\alpha$ (with lanes marked by superscripts as usual) consisting of the lower subintervals $(7^1,8^1,9^1,10^1,11^1)$, $(8^2,9^2,10^2,$ $11^2)$, $(8^3,9^3,10^3,11^3)$, $(6^1,$ $7^1,8^1,9^1,10^1)$, $(7^2,8^2,9^2,10^2)$, $(8^4,9^4,10^4)$, $(6^2,7^2,8^1,9^1)$, $(7^3,8^3,9^3)$, which is a $7^*$-plateau and an $(8,2,3)$-plateau. We have $$\mu_7 = \begin{array}[t]{r|c|c|c|l}
 \cline{2-4} -1 &\phantom{|}&\phantom{|}&\phantom{|}& 0 \\
 \cline{2-4} -1 &\phantom{|}&\phantom{|}& \multicolumn{2 }{l}{ 0 } \\
 \cline{2-3} 
\end{array}$$ and $$\mu_8 = \begin{array}[t]{r|c|c|c|c|l}
 \cline{2-5} -2 &\phantom{|}&\phantom{|}&\phantom{|}&\phantom{|}& 0 \\
 \cline{2-5} -2 &\phantom{|}&\phantom{|}&\phantom{|}& \multicolumn{2 }{l}{ 0 } \\
 \cline{2-4} -1 &\phantom{|}& \multicolumn{4 }{l}{ 0 } \\
 \cline{2-2} 
\end{array}.$$
To add one noncontributing box to the third stretch (which has length $1$, since $|\mu_8^1| - |\mu_7^1| = 1$) of $\mu_7$, we add one copy of $7$ to $\alpha$; $C_0$ in this case consists of the lower subintervals $(7^2,8^2,9^2,10^2)$, $(7^3,8^3,9^3)$, $(8^3,9^3,10^3,11^3)$, $(8^4,9^4,10^4)$. The resulting cascading sequence $\alpha'$ (where the added copies are in bold) consisting of the lower subintervals $(7^1,8^1,9^1,10^1,11^1)$, $(\textbf{7}^2,8^2,9^2,10^2,11^2)$, $(8^3,9^3,10^3,11^3)$, $(6^1,7^1,8^1,9^1,10^1)$, $(7^3,8^2,9^2,10^2)$, $(8^4,9^4,10^4)$, $(6^2,7^2,8^1,9^1)$, $(7^4,8^3,9^3)$ corresponds to the resulting rigged configuration whose seventh partition is $\begin{array}[t]{r|c|c|c|c|l}
 \cline{2-5} -2 &\phantom{|}&\phantom{|}&\phantom{|}&\phantom{|}& 0 \\
 \cline{2-5} -1 &\phantom{|}&\phantom{|}& \multicolumn{3 }{l}{ 0 } \\
 \cline{2-3} 
\end{array}$.
\end{exm}

\begin{exm}
Consider the cascading $11$-sequence consisting of the lower subintervals $(8^1,9^1,$ $10^1,11^1),$ $(8^2,9^2,10^2,11^2),$ $(8^3,9^3,10^3,11^3),$ $(8^4,9^4,10^4,11^4),$ $(7^1,8^1,9^1,10^1),$ $(7^2,8^2,9^2,10^2),$ $(7^3,8^3,9^3,10^3),$ $(8^5,9^5,10^5),$ $(6^1,7^1,8^1,9^1),$ $(6^2,7^2,8^2,9^2),$ $(6^3,7^3,8^3,9^3),$ $(7^4,8^4,9^4),$ $(7^5,8^5,9^5),$ $(5^1,6^1,7^1,8^1),$ $(6^4,7^4,8^4)$. \\
To add boxes to the stretch of height two, notice that $C_2$ consists of the lower subintervals $(5^1,6^1,7^1,8^1)$, $(6^3,7^3,8^3,9^3)$, $(7^3,8^3,9^3,10^3)$, $(8^4,9^4,10^4,11^4)$. If we add three boxes to the stretch of height two, we obtain $(\textbf{7}^1,8^1,9^1,10^1,11^1),$ $(\textbf{7}^2,8^2,9^2,10^2,11^2),$ $(\textbf{7}^3,8^3,9^3,10^3,11^3),$ $(8^4,9^4,10^4,11^4),$ $(\textbf{6}^1,7^1,8^1,9^1,10^1)$, $(\textbf{6}^2,7^2,8^2,9^2,10^2)$, $(7^4,8^3,9^3,10^3)$, $(8^5,9^5,10^5)$, $(\textbf{5}^1,6^1,7^1,$ $8^1,9^1)$, $(\textbf{5}^2,6^2,7^2,8^2,9^2)$, $(6^3,7^3,8^3,9^3)$, $(\textbf{6}^4,7^4,8^4,9^4)$, $(7^5,8^5,9^5)$, $(5^3,6^3,7^3,8^1)$, $(\textbf{5}^4,6^4,7^4,8^4)$. \\
To add boxes to the stretch of height one, notice that $C_1$ consists of the lower subintervals $(6^3,7^3,8^3,9^3)$, $(6^4,7^4,8^4,9^4)$, $(6^1,7^1,8^1,9^1,10^1)$, $(6^2,7^2,8^2,9^2,10^2)$, $(7^4,8^3,9^3,10^3)$, $(7^1,8^1,9^1,$ $10^1,11^1),$ $(7^2,8^2,9^2,10^2,11^2),$ $(7^3,8^3,9^3,10^3,11^3)$, $(8^4,9^4,10^4,11^4)$. If we add one box to the stretch of height one, we obtain $(7^1,8^1,9^1,10^1,11^1),$ $(7^2,8^2,9^2,10^2,11^2),$ $(7^3,8^3,9^3,10^3,11^3),$ $(8^4,9^4,10^4,11^4),$ $(6^1,7^1,8^1,9^1,10^1)$, $(6^2,7^2,8^2,9^2,10^2)$, $(7^4,8^3,9^3,10^3)$, $(\textbf{7}^5,8^5,9^5,10^5)$, $(5^1,6^1,$ $7^1,8^1,9^1)$, $(5^2,6^2,7^2,8^2,9^2)$, $(6^3,7^3,8^3,9^3)$, $(6^4,7^4,8^4,9^4)$, $(\textbf{6}^5,7^5,8^5,9^5)$, $(5^3,6^3,7^3,8^1)$, $(5^4,6^4,$ $7^4,8^4)$.
\end{exm}

\begin{proc}[Adding Contributing Boxes to a Given Stretch]
\label{addcon}
Suppose $\mathrm{ht}(z_i)$ $<$ $r$.
\begin{enumerate}
\item \label{addcon1} To add $n_i$ contributing boxes beneath $z_i$, where $0 \leq n_i \leq |z_i|$: Add $n_i$ copies of the lower subinterval $(p-\mathrm{ht}(z_i)-1,p-\mathrm{ht}(z_i),\ldots,p-1)$ to the right of $\beta$. 
\item \label{addcon2} Exceptional Case: We can add any number $N \in \mathbb{Z}_{\geq0}$ of contributing boxes to the top row of $\mu_{p-1}$ by adding $N$ singleton lower subintervals $(p-1)$ to the right of $\beta$.
\end{enumerate}
\end{proc}
That Procedure \ref{addcon} works as described will be proven in Section \ref{proof}. 

\subsection{Proof of the Procedures for Adding Boxes}
\label{proof}
We now show that the two procedures for adding boxes to a stretch works as described.

\begin{lem}
\label{plateaudet}
Suppose $\alpha$ is a $(p,q,r)$-plateau and a $(p-1)^*$-plateau, with corresponding rigged configurations $R' = (\nu_1,\ldots,\nu_{p-1},\nu_p,\ldots,\nu_n)$. Then $R'$ (and hence $\alpha$) is completely determined once we know the rigged partitions $\nu_{p-1},\nu_p,\ldots,\nu_n$.
\end{lem}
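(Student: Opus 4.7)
The plan is to reduce the statement to Lemma \ref{mesa} together with the bijection between cascading sequences and rigged configurations.

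First, since $\alpha$ is a $(p-1)^*$-plateau, Lemma \ref{mesa} directly applies (with $p$ there replaced by $p-1$) and yields $\nu_{l-1} = \overline{\nu_l}$ with all riggings zero for every $l \in [p-1]$. Iterating this identity downward from $l = p-1$, one gets
\[
\nu_{p-2} = \overline{\nu_{p-1}}, \quad \nu_{p-3} = \overline{\overline{\nu_{p-1}}}, \quad \ldots, \quad \nu_{1} = \overline{\cdots \overline{\nu_{p-1}}},
\]
each carrying zero riggings. Hence $\nu_1, \ldots, \nu_{p-2}$ are completely recoverable from $\nu_{p-1}$ alone, and therefore the entire rigged configuration $R' = (\nu_1, \ldots, \nu_n)$ is determined by the data $\nu_{p-1}, \nu_p, \ldots, \nu_n$.

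To pass from $R'$ being determined to $\alpha$ itself being determined, I would invoke the existing bijection between $\bar{A}_n$ and $B(\infty)$ rigged configurations: by Proposition \ref{f-string thm}, $\Phi$ is a bijection between marginally large tableaux and cascading sequences; composing with the crystal isomorphism between marginally large tableaux and $RC(\infty)$ given by Salisbury and Scrimshaw \cite{sascrim2}, one obtains a bijection $\bar{A}_n \to RC(\infty)$ (since both are crystal isomorphisms with the same highest weight element and both target crystals are connected). Under this bijection $\alpha$ corresponds to $R'$, so recovering $R'$ from $\nu_{p-1}, \ldots, \nu_n$ recovers $\alpha$ as well.

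There is essentially no obstacle once Lemma \ref{mesa} is in place: the only content is the observation that the $(p-1)^*$-plateau hypothesis forces $\nu_1, \ldots, \nu_{p-2}$ to be the iterated row-truncations of $\nu_{p-1}$ with zero riggings, and that the injectivity of the composite bijection $\bar{A}_n \to RC(\infty)$ upgrades determinacy of $R'$ to determinacy of $\alpha$.
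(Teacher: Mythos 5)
Your proof is correct and follows essentially the same route as the paper, whose entire proof is ``Follows from Lemma \ref{mesa}''; you simply make explicit the two implicit steps (iterating $\nu_{l-1}=\overline{\nu_l}$ downward from $\nu_{p-1}$, and invoking the bijection between cascading sequences and rigged configurations to recover $\alpha$ from $R'$).
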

\begin{proof}
Follows from Lemma \ref{mesa}.
\end{proof}
\begin{rem}
Hence, if we keep $\nu_p,\ldots,\nu_n$ fixed, the range of all possible such $\alpha$ is completely determined by the range of all possible $\nu_{p-1}$ (obtained by adding boxes to the allowed stretches of $\overline{\nu}_p$).
\end{rem}

For any cascading sequence $\alpha$ with corresponding rigged configuration $R$, let $\alpha[i]$ denote the sub-cascading sequence formed by the first $i$ lower subintervals of $\alpha$, and let $R[i]$ denote the rigged configuration corresponding to $\alpha[i]$; we call $\alpha[i]$ the \textbf{initial $i$-segment of $\alpha$}. For any lower subinterval $I$ of $\alpha$, let $\alpha_I$ denote the portion of $\alpha$ preceding $I$, and let $R_I$ denote the rigged configuration corresponding to $\alpha_I$. The following lemmas show that, if $\alpha$ is an $l$-plateau, then it has certain nice properties, which we will use in the proof of the main lemma of this section.

\begin{lem}
\label{auxiliary}
Suppose that $\alpha$ is a cascading sequence with corresponding rigged configuration $R$ such that $R[i] = (\mu_1,\mu_2,\ldots,\mu_n)$ has the property that the $j$th column of $\overline{\mu_k}$ is shorter than the $j$th column of $\mu_{k-1}$ for all $j \geq b$ for some $b$. Then $R[i+1] = (\mu'_1,\mu'_2,\ldots,\mu'_n)$ has the property that the $j$th column of $\overline{\mu'_k}$ is shorter than the $j$th column of $\mu'_{k-1}$ for all $j \geq c$ for some $c \geq b$.
\end{lem}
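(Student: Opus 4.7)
The plan is to analyze how the action of the $(i+1)$st lower subinterval $I := I_{i+1} = (a, a+1, \ldots, m)$ on $R[i]$ affects the column-by-column comparison between $\overline{\mu_k}$ and $\mu_{k-1}$. For readability I will write $h_j(\mu)$ for the height of the $j$th column of a partition $\mu$. By Property \ref{p three} of Lemma \ref{snake}, $I$ adds exactly one box to each of $\mu_a, \mu_{a+1}, \ldots, \mu_m$ in rows $r_a, r_{a+1}, \ldots, r_m$ respectively; set $u_l := |r_l| + 1$, the column of $\mu_l$ into which the added box falls. Since $|r_l| \leq |r_{l-1}|$ for $l > a$ by the definition of $r_l$, the sequence $u_a \geq u_{a+1} \geq \cdots \geq u_m$ is nonincreasing.

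First I would dispose of the routine cases. For $k \notin \{a, a+1, \ldots, m+1\}$, neither $\mu_k$ nor $\mu_{k-1}$ is touched, so the property is inherited with the same $b$. For $k = a$, the box added to $\mu_a$ lies in its top row $r_a$, so $\overline{\mu'_a} = \overline{\mu_a}$ while $\mu'_{a-1} = \mu_{a-1}$, leaving the comparison unchanged. For $k = m+1$, $\mu_m$ gains a box and $\mu_{m+1}$ is fixed, so for each $j \geq b$ one has $h_j(\overline{\mu'_{m+1}}) = h_j(\overline{\mu_{m+1}}) < h_j(\mu_m) \leq h_j(\mu'_m)$.

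The crux lies in $k \in [a+1, m]$, where both partitions in the comparison receive a box. When $u_k = u_{k-1}$, both partitions gain a box in the same column, and a direct case split (on whether $r_k$ is the top row of $\mu_k$) shows every strict inequality at $j \geq b$ survives. The main obstacle is the decreasing case $u_k < u_{k-1}$: here $\mu_{k-1}$ gains its box at column $u_{k-1}$ while $\mu_k$ gains one at a different column $u_k$, and the added box in $\mu_k$ can descend into $\overline{\mu'_k}$, threatening to collapse the strict inequality at $j = u_k$ into equality. The pivotal subclaim I would establish is that in this decreasing case, $r_k$ cannot be the top row of $\mu_k$: if it were, the containment $\mu_{k-1} \subset \mu_k$ from Property \ref{p one} of Lemma \ref{snake} would force $|r_{k-1}| \leq |\mathrm{top}(\mu_{k-1})| \leq |\mathrm{top}(\mu_k)| = |r_k|$, contradicting $|r_k| < |r_{k-1}|$. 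With this in hand, the potential failure is localized precisely to $j = u_k$.

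To finish, I would take $c := \max(b, u_a)$. Every problematic column from the decreasing case satisfies $u_k < u_{k-1} \leq u_a \leq c$, so any $j \geq c$ is strictly larger than each such $u_k$ and bypasses the potential failure; at the remaining indices $j \geq c$, either both sides of the comparison are untouched or they shift compatibly by at most one each, so the old strict inequality is preserved. This yields $h_j(\overline{\mu'_k}) < h_j(\mu'_{k-1})$ for all $j \geq c$ and all relevant $k$, with $c \geq b$ by construction. The single delicate step, where care is most needed, is the structural subclaim that $r_k$ is not the top row of $\mu_k$ when $u_k < u_{k-1}$; once it is in hand, the rest is routine column-by-column bookkeeping via Lemma \ref{snake}.
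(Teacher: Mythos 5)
Your proposal is correct and follows essentially the same route as the paper's proof: both reduce to the case where the new lower subinterval adds a box to both $\mu_{k-1}$ and $\mu_k$, observe via Lemma \ref{snake} that the column receiving a box in $\mu_{k-1}$ is at least the column receiving one in $\mu_k$, and then push $c$ up past the one potentially problematic column (the paper takes $c=p=u_{k-1}$ or $c=b$, you take the slightly cruder but equally valid $c=\max(b,u_a)$). Your structural subclaim that $r_k$ is not the top row of $\mu_k$ in the decreasing case is a correct extra observation, though not strictly needed, since in either subcase the only column where $\overline{\mu'_k}$ can grow is $u_k$.
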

\begin{rem}
This lemma is used to prove Lemma \ref{no head}.
\end{rem}
\begin{proof}
Let $I$ denote the lower subinterval after $\alpha[i]$. We may assume that $I$ has head at most $k$. If $I$ has head $k$, then it adds a box to the first row of $\mu_k$, and the conclusion is still true for $c = b$. Suppose $I$ has head less than $k$. Suppose $I$ adds a box to the $p$th column of $\mu_{k-1}$. Then it must add a box to the $q$th column of $\mu_k$, where $p \geq q$. If $q < b$, then the conclusion is still true for $c = b$. Suppose that $q \geq b$. Then the conclusion is true for $c = p$.
\end{proof}

\begin{lem}
\label{no head}
Suppose that $\alpha$ is an $l$-plateau with corresponding rigged configuration $R = (\lambda_1,\lambda_2,\ldots,\lambda_n)$. Then no lower subinterval of $\alpha$ with head less than $l$ can contain the head of some $l$-lane.
\end{lem}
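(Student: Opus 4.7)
The plan is to argue by contradiction: suppose $\alpha$ is an $l$-plateau but some lower subinterval $I_b = (a, a+1, \ldots, m)$ with $a < l \leq m$ contains the head of some $l$-lane $L_u(l)$. First I will examine the state $R_{I_b} = (\mu_1, \ldots, \mu_n)$ just before $I_b$ acts. By Lemma~\ref{snake} applied with the index $l-1 \in [m-1]$, the containments $\overline{\mu_l} \subset \mu_{l-1} \subset \mu_l$ hold. Since the entry $l$ of $I_b$ is the head of $L_u(l)$, the lane-forming procedure forces $\mu_l$ to have exactly $u-1$ columns, and the box that $I_b$ adds to $\mu_l$ extends the top row $\mu_l^1$, so $r^{I_b}_l = \mu_l^1$ with $|r^{I_b}_l| = u-1$. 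The chain $u-1 = |r^{I_b}_l| \leq |r^{I_b}_{l-1}| \leq |\mu_{l-1}^1| \leq |\mu_l^1| = u-1$ is then forced to be all equalities, so $r^{I_b}_{l-1}$ is the top row of $\mu_{l-1}$ with length $u-1$; hence $I_b$ simultaneously creates a new column at position $u$ of $\mu_{l-1}$. In particular, $|L_u(l)| = |L_u(l-1)| = 1$ immediately after $I_b$.

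Next, the $l$-plateau condition demands $|L_u(l-1)| = |L_u(l)| - 1$ at the end of $\alpha$, so the difference $\Delta := |L_u(l)| - |L_u(l-1)|$ must rise from $0$ (just after $I_b$) to $1$ (at the end of $\alpha$). To derive the contradiction I will show by induction over subsequent lower subintervals that the invariant $\Delta \leq 0$ is maintained. Subintervals $J$ with $\max J < l$ do not touch $L_u(l)$, so $\Delta$ only weakly decreases. Subintervals $J$ with head $l$ direct the entry $l$ into a brand-new $l$-lane whose index is strictly greater than $u$ (since $L_u(l)$ is already nonempty), so $L_u(l)$ is likewise untouched. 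The only nontrivial case is $J = (a', \ldots, m')$ with $\max J \geq l$ and $a' < l$. Here I use the containments $\overline{\mu_l} \subset \mu_{l-1} \subset \mu_l$ (still valid before $J$ by Lemma~\ref{snake}), together with the inductive invariant, to show that whenever the procedure places the entry $l$ of $J$ in $L_u(l)$ (i.e.\ $d_{l-a'+1} = u-1$, which requires $|L_{u-1}(l)| > |L_u(l)|$), the strict decrease $|L_{u-1}(l-1)| > |L_u(l-1)|$ also holds, forcing $d_{l-a'} = u-1$ and hence placing the entry $l-1$ of $J$ in $L_u(l-1)$. So $\Delta$ is preserved, and since no case allows $\Delta$ to increase, we have $\Delta \leq 0$ throughout, contradicting $\Delta = 1$ at the end.

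The hard part will be ruling out the ``jump'' scenario in the last case: a priori the procedure could set $d_{l-a'} = v$ with $v > u$, placing the entry $l-1$ of $J$ into some $L_{v+1}(l-1)$ instead of $L_u(l-1)$ and allowing $\Delta$ to increase by $1$. A direct check using $|L_i(l-1)| \in \{|L_i(l)|-1,\, |L_i(l)|\}$ shows that in this jump scenario one is forced into the constrained profile $|L_u(l)| = |L_{u+1}(l)| = \cdots = |L_{v+1}(l)| = c$, $|L_u(l-1)| = \cdots = |L_v(l-1)| = c$, and $|L_{v+1}(l-1)| = c-1$, so the mismatch merely migrates to column $v+1$ rather than being resolved. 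The cleanest route I see is to iterate this observation: the migrated mismatch at column $v+1$ must later be repaired by a subinterval extending $L_{v+1}(l)$, which in turn could produce a further jump at a still higher index, and so on. Since $\nu_l$ has only finitely many columns (bounded by $\mathrm{maxr}_l$ via Lemma~\ref{max rows}) and $\alpha$ is finite, this cascade cannot terminate consistently with the plateau condition at every column, yielding the desired contradiction. Strengthening the invariant to track the joint profile of $(|L_i(l)|)_i$ and $(|L_i(l-1)|)_i$ across all relevant indices at once is likely the right way to package this propagation argument.
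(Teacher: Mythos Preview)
Your overall architecture matches the paper's: argue by contradiction, observe that the offending lower subinterval forces $|L_u(l)|=|L_u(l-1)|$ immediately after it acts, and then propagate an invariant through the remaining lower subintervals to contradict the $l$-plateau condition. Your chain of equalities $u-1 = |r_l| \le |r_{l-1}| \le |\mu_{l-1}^1| \le |\mu_l^1| = u-1$ is exactly the observation the paper uses to get started.

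The gap is in the jump scenario. You correctly identify that when the entry $l$ of a later $J$ lands in $L_u(l)$, its entry $l-1$ might instead land in some $L_{v+1}(l-1)$ with $v\ge u$, breaking your single-column invariant $\Delta_u\le 0$. Your proposed fix (``the mismatch migrates to $v+1$; iterate'') is only a sketch, and you say so yourself in the last sentence. Two concrete issues: (i) you invoke Lemma~\ref{max rows} to bound the number of columns of $\nu_l$, but that lemma bounds \emph{rows}, not columns, so the finiteness you need must come simply from $\alpha$ being finite; (ii) more seriously, the migration argument needs a well-defined invariant that is actually preserved at every step, not a moving target whose column index you change mid-proof without re-verifying the earlier cases for the new index.

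The paper closes exactly this gap with Lemma~\ref{auxiliary}: the strengthened invariant is ``there exists $c$ such that for all $j\ge c$ the $j$th column of $\overline{\mu_l}$ is strictly shorter than the $j$th column of $\mu_{l-1}$.'' This tail-type invariant absorbs your jump phenomenon automatically (the index $c$ is allowed to increase), it is preserved by \emph{every} subsequent lower subinterval via a short two-case check, and at the end it gives a column $d$ with $|L_d(l-1)|>|L_d(l)|-1$, directly contradicting the $l$-plateau condition. This is precisely the ``joint profile'' invariant you gesture at in your final sentence; writing it down and proving it stable is the missing step in your argument.
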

\begin{proof}

Suppose for a contradiction that $\alpha$ has a lower subinterval $J$ with head less than $l$ containing the head of some $l$-lane. Without loss of generality assume that $J$ is the leftmost such lower subinterval. Let $\alpha[i]$ denote the portion of $\alpha$ preceding $J$, and let $R[i] = (\mu_1,\mu_2,\ldots,\mu_n)$ denote the corresponding rigged configuration. By definition, $J$ adds a box to the top row of $\mu_l$. By Lemma [18], we have $\mu_{l-1} \subset \mu_l$, so $J$ must add a box to the top row of $\mu_{l-1}$ as well; in fact, the top rows of $\mu_{l-1}$ and $\mu_l$ must be identical. Then, again by Lemma [18], $R[i+1] = (\nu_1,\nu_2,\ldots,\nu_n)$ has the property that the $j$th column of $\overline{\nu_l}$ is shorter than the $j$th column of $\nu_{l-1}$ for all $j \geq m$, where $m$ is the length of the top row of $\nu_{l-1}$. Repeatedly applying Lemma \ref{auxiliary}, we conclude that the $d$th column of $\overline{\lambda_l}$ is shorter than the $d$th column of $\lambda_{l-1}$, for some $d$. This contradicts the $l$-plateau assumption on $\alpha$.
\end{proof}

\begin{lem}
\label{inherit runs}
Suppose that $\alpha$ is an $l$-plateau. Then the following hold: 
\begin{enumerate}
\item $\alpha[i]$ is also an $l$-plateau for any $i$.
\item For any lower subinterval $I$ of $\alpha[i]$, the entries $l-1$ and $l$ of $I$ have the same lane number.
\end{enumerate}
\end{lem}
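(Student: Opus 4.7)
I will prove both parts simultaneously by induction on $i$, with the base case $i = 0$ (empty prefix) trivial.

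Before the induction, I would record two preliminary facts. The first is a purely cascading-sequence observation: in any $l$-plateau $\alpha$, every lower subinterval of $\alpha$ has tail $\geq l$. Indeed, a cascading sequence orders its subintervals with weakly decreasing tails, so the last lower subinterval of any given tail $m < l$ cannot have its right-endpoint entry (of value $m$) extended by any later subinterval; consequently the $m$-lane containing that right endpoint would end at a right endpoint, violating condition 2 (if $m = l-1$) or condition 3 (if $m < l-1$) of the $l$-plateau definition. This eliminates the otherwise awkward cases. The second fact (Claim A) is that at any stage $\alpha[j]$ satisfying the inductive hypothesis, $L_k(l-1)$ existing implies $L_k(l)$ existing: the tail entry of $L_k(l-1)$ cannot be a right endpoint (condition 2), so the next entry of its containing lower subinterval has value $l$, and Part 2 applied at that earlier stage places that $l$-entry also in lane $k$.

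For the inductive step, append $I = I_{i+1}$ with head $a$ and tail $m$; by the preliminary observation $m \geq l$. We split on $a$. If $a > l$, then $I$ touches only lanes of value strictly above $l$, so the three plateau conditions are preserved vacuously, while Part 2 is vacuous for $I$. If $a = l$, the head entry of $I$ creates a fresh $l$-lane $L_{d+1}(l)$ of length one, where $d$ is the number of preexisting $l$-lanes in $\alpha[i]$; Claim A forces $L_{d+1}(l-1)$ to be absent, so condition 1 is maintained with $|L_{d+1}(l-1)| = 0 = |L_{d+1}(l)| - 1$, while conditions 2, 3 and Part 2 for $I$ are unaffected.

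The substantive case is $a < l$, in which $I$ contains both $l-1$ and $l$. Condition 1 of the $l$-plateau hypothesis on $\alpha[i]$ together with Lemma \ref{lane col} yields $\mu_{l-1} = \overline{\mu_l}$ as underlying partitions, and Lemma \ref{lane rig} combined with conditions 2, 3 makes all the relevant riggings vanish. By Lemma \ref{snake} Property 3, $I$ adds boxes to rows $r_{l-1}$ and $r_l$ of $\mu_{l-1}$ and $\mu_l$ respectively, where each $r_i$ is the uppermost row of $\mu_i$ with $|r_i| \leq |r_{i-1}|$. A short calculation using $\mu_{l-1} = \overline{\mu_l}$ then yields $|r_l| = |r_{l-1}|$: writing $r_{l-1} = \mu_{l-1}^t = \mu_l^{t+1}$, the uppermost row of $\mu_l$ of length at most $|\mu_l^{t+1}|$ is forced to have length exactly $|\mu_l^{t+1}|$. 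Consequently the two new boxes occupy the same column $|r_l|+1$, and Lemma \ref{lane col} identifies this as saying the entries $l-1$ and $l$ of $I$ enter the same lane $L_{|r_l|+1}$; this simultaneously proves Part 2 for $I$ and preserves condition 1, since both $|L_j(l-1)|$ and $|L_j(l)|$ grow by one for $j = |r_l|+1$. Conditions 2 and 3 are preserved because for any value $v \leq l-1$ the entry of $I$ with value $v$ sits at position $v - a + 1 < m - a + 1$ and so is not at the right endpoint of $I$.

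The principal obstacle is establishing the geometric matching $|r_l| = |r_{l-1}|$ in the substantive case; this is where the $l$-plateau hypothesis on $\alpha[i]$ (via $\mu_{l-1} = \overline{\mu_l}$) must be fused with Lemma \ref{snake}'s row-selection rule to force the two newly added boxes into the same column. Once this equality is in place, all remaining verifications are mechanical and the induction closes.
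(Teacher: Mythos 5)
Your overall strategy coincides with the paper's: induct on the prefix $\alpha[i]$, split on the head $a$ of the appended subinterval $I$, and in the case $a<l$ use $\mu_{l-1}=\overline{\mu_l}$ (from plateau condition 1 plus Lemma \ref{lane col}) together with Lemma \ref{snake} to force the new $l-1$ and $l$ boxes into the same column. Your preliminary observation that every lower subinterval has tail $\geq l$, and your Claim A, are both correct and correspond to steps the paper treats more tersely.

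However, there is a genuine gap in the substantive case $a<l$, at the assertion that condition 1 is ``preserved, since both $|L_j(l-1)|$ and $|L_j(l)|$ grow by one for $j=|r_l|+1$.'' That inference is only valid when $L_j(l)$ already existed in $\alpha[i]$, so that the relation $|L_j(l-1)|=|L_j(l)|-1$ held beforehand. Your computation $|r_l|=|r_{l-1}|$ does not exclude the possibility that $r_l$ is the \emph{top} row of $\mu_l$ (this happens precisely when $r_{l-1}$ is the top row of $\mu_{l-1}$ and $|\mu_l^1|=|\mu_l^2|$, and also in the degenerate situation where $\mu_l$ is still empty when the first head-$\leq l$ subinterval arrives). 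In that scenario the $l$-entry of $I$ starts a brand-new column of $\mu_l$, both $L_j(l)$ and $L_j(l-1)$ are created with length $1$, and condition 1 is violated rather than preserved. Nothing in your argument --- which uses only the inductive hypothesis on the prefix and local data about $I$ --- rules this out; excluding it genuinely requires the global hypothesis that the full sequence $\alpha$ is an $l$-plateau. This is exactly the content of Lemma \ref{no head} (no lower subinterval with head less than $l$ contains the head of an $l$-lane), which the paper's proof invokes at precisely this point, and whose own proof propagates a column-height discrepancy forward via Lemma \ref{auxiliary} to contradict the plateau property of $\alpha$. The fix is simply to cite Lemma \ref{no head} to conclude that the box added to $\mu_l$ lies strictly beneath the top row, so that $L_{|r_l|+1}(l)$ is a pre-existing lane; with that insertion your induction closes.
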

\begin{proof}
Notice that, by definition, $I$ must contain $l$ whenever it contains an entry less than $l$. The two items are vacuously true if every lower subinterval of $\alpha[i]$ has head greater than $l$. In the base case that $\alpha[i]$ ends in $J$, where $J$ is the first lower subinterval of $\alpha$ with head at most $l$, $J$ must contain the head of some $l$-lane, so $J$ must have head $l$ by Lemma \ref{no head}, and hence the two items hold for $\alpha[i]$. Now we suppose that the two items hold for $\alpha[i]$, and prove them for $\alpha[i+1]$. Let $I$ denote the lower subinterval following $\alpha[i]$. If $I$ has head at least $l$, clearly the two items hold true (vacuously true for the second item). Suppose $I$ has head less than $l$. By Lemma \ref{no head}, $I$ must add a box beneath the top row of the $l$th partition. Since the first item holds for $\alpha[i]$, the $(l-1)$st partition is exactly the portion of the $l$th partition beneath the first row, in the corresponding rigged configuration. If $I$ adds a box to row $r^{l-1}$ of the $(l-1)$st partition (where $r^{l-1}$ is the uppermost row of its length), then it must add a box to row $r^l$ of the $l$th partition, where $r^l$ is the uppermost row of the $l$th partition with length $|r^{l-1}|$. This shows that $\alpha[i+1]$ is still an $l$-plateau, and that the entries $l-1$ and $l$ of $I$ have the same lane number $|r^{l-1}|+1$, completing the induction.
\end{proof}

\begin{lem}
\label{lyndon}
Suppose that $\alpha$ is an $l$-plateau. For any $i$ and any $j < l$, $\alpha[i]$ has no fewer lower subintervals with head $l$ than lower subintervals with head $j$. We say that $\alpha$ satisfies the \textbf{Lyndon property} for letter $l$.
\end{lem}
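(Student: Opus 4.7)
The approach is to translate head counts into lane counts, and then into column counts of the rigged partitions of the corresponding rigged configuration, exploiting the structural rigidity of an $l$-plateau. Let $H_k(i)$ denote the number of lower subintervals of $\alpha[i]$ with head $k$, and $N_k(i)$ the number of $k$-lanes in $\alpha[i]$. By Lemma \ref{inherit runs} the initial segment $\alpha[i]$ is itself an $l$-plateau, so it suffices to argue at that level; I will derive the chain of inequalities $H_j(i) \leq N_j(i) \leq N_l(i) = H_l(i)$.

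The main obstacle I expect is a key structural claim: every lower subinterval of an $l$-plateau has tail at least $l$. I would prove this by contradiction. Suppose some subinterval has tail $k \leq l-1$, so the $k$-component of $\alpha[i]$ is nonempty. The cascading ordering forces all non-right-endpoint $k$-entries (originating from $m$-components with $m > k$) to occur before the right-endpoint $k$-entries coming from the $k$-component, and every component appearing after the $k$-component has tail strictly less than $k$ and therefore contains no $k$-entry at all. Consequently the very last $k$-entry in $\alpha[i]$ is a right endpoint, and the $k$-lane that receives it terminates at this right endpoint, contradicting condition 2 (when $k = l-1$) or condition 3 (when $k < l-1$) of the $l$-plateau definition.

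With all tails bounded below by $l$, each application of Lemma \ref{snake} throughout the action of $\alpha[i]$ has $m \geq l$, so Property 1 at every stage (both before and after each subinterval acts, via the inductive step of Lemma \ref{snake}) guarantees $\mu_{l'} \subset \mu_{l'+1}$ for all $l' \in [l-1]$. Chaining these containments yields $\mu_j \subset \mu_l$ for every $j \leq l$, and Lemma \ref{lane col} then translates this into $N_j(i) \leq N_l(i)$, since $N_k(i)$ equals the number of columns of $\mu_k$.

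Finally, the head entry of any head-$k$ subinterval opens a fresh $k$-lane, so $H_k(i) \leq N_k(i)$ for every $k$. Meanwhile, Lemma \ref{no head} ensures that no subinterval with head smaller than $l$ can contain the head of an $l$-lane; hence the head-$l$ subintervals are in bijection with the $l$-lanes, yielding $N_l(i) = H_l(i)$. Combining everything delivers $H_j(i) \leq N_j(i) \leq N_l(i) = H_l(i)$, which is the asserted Lyndon property for letter $l$.
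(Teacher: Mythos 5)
Your proof is correct and follows essentially the same route as the paper's: reduce to the containment $\nu_j \subset \nu_{j+1} \subset \cdots \subset \nu_l$ via Lemma \ref{snake} and the plateau hypothesis, then use Lemma \ref{no head} to identify the head-$l$ subintervals exactly with the columns of $\nu_l$ (equivalently its $l$-lanes or top-row boxes), while the head-$j$ subintervals inject into the columns of $\nu_j$. The only differences are cosmetic or expository: you count lanes where the paper counts top-row boxes (the same thing by Lemma \ref{lane col}), and you explicitly justify, via the ``every lower subinterval has tail at least $l$'' claim, why the containments persist at every stage of $\alpha[i]$ --- a step the paper asserts from Lemma \ref{snake} and the plateau definition without elaboration.
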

\begin{rem}
In particular, $\alpha$ can be considered as a left Lyndon word in the letters $l$ and $j$.
\end{rem}
\begin{proof}
Fix $i$ and $j < l$. By Lemma \ref{inherit runs}, $\alpha[i]$ is also an $l$-plateau. Let $R[i] = (\nu_1,\nu_2,\ldots,\nu_n)$ denote the rigged configuration corresponding to $\alpha[i]$. By Lemma \ref{snake} and by the definition of an $l$-plateau, we have $\nu_j \subset \nu_{j+1} \subset \ldots \subset \nu_l$. By Lemma \ref{no head}, any lower subinterval that adds a box to the first row of the $l$th partition must have head $l$. Since any lower subinterval of $\alpha[i]$ with head $j$ adds a box to the top row of the $j$th partition and any lower subinterval of $\alpha[i]$ with head $l$ adds a box to the top row of the $l$th partition, it follows that $\alpha[i]$ has no fewer lower subintervals with head $l$ than lower subintervals with head $j$.
\end{proof}

\begin{lem}
\label{lyndonimplies}
Suppose that $\alpha$ satisfies the Lyndon property for all letters at most $l$. If $I$ is a lower subinterval of $\alpha$ with head $j \leq l$, then all entries of $I$ at most $l$ have the same lane number, and the depth of entry $l'$ is $l'-j+1$ for any $j \leq l' \leq l$.
\end{lem}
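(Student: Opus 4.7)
The plan is to prove the lemma by induction on the position $b$ of $I = I_b$ in $\alpha$, strengthening it with two auxiliary invariants maintained simultaneously by the induction: $(B)$ for each letter $l' \leq l$, the number of distinct lanes at letter $l'$ after processing $\alpha[b]$ equals $n_{l'}^{(b)}$, the number of head-$l'$ lower subintervals in $\alpha[b]$; and $(C)$ the lane number of each previously processed $I_{b'}$ with head $j' \leq l$ equals its rank among the head-$j'$ lower subintervals of $\alpha[b]$. Together with the Lyndon hypothesis, $(B)$ and $(C)$ force the following rigid description: for each $u$ and each $l' \leq l$, $|L_u(l')|$ equals the number of $j'' \in [1,l']$ such that the $u$-th head-$j''$ LS has already been processed and has top $\geq l'$.

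For the inductive step, suppose $I = I_{b+1}$ has head $j \leq l$ and is the $(k+1)$-th head-$j$ LS to be processed. By $(B)$ the maximum lane index at letter $j$ in $\alpha_I$ is $k$, so $d_1 = k$ and entry $j$ receives lane number $k+1$ at depth $1$. For each subsequent entry at letter $l' \in (j, \min(m,l)]$, I must show $d_s = k$, equivalently $|L_k(l')| > |L_{k+1}(l')|$ in $\alpha_I$. The $k$-th head-$j$ LS lies in $\alpha_I$ and has top $\geq m \geq l'$ (it sits either in an earlier $m'$-component with $m' > m$, or in the $m$-component strictly before $I$), so it contributes to $|L_k(l')|$; however, the $(k+1)$-th head-$j$ LS is $I$ itself, which is not yet in $\alpha_I$, and this contribution has no analogue in $|L_{k+1}(l')|$. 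This yields the strict inequality, forcing $d_s = k$ and lane number $k+1$ uniformly across all entries of $I$ at letters $\leq l$.

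To recover the depth, I count $|L_{k+1}(l')|$ in $\alpha_I$. Heads $j'' \leq j$ cannot contribute: for $j'' = j$ the $(k+1)$-th such LS is $I$ itself, and for $j'' < j$, Lyndon for letter $j$ gives $n_{j''} \leq n_j = k$, so the $(k+1)$-th head-$j''$ LS does not exist in $\alpha$ at all. For each $j'' \in (j, l']$, applying Lyndon for letter $j''$ to the initial segment $\alpha[b+1]$ gives $n_{j''}^{(b+1)} \geq n_j^{(b+1)} = k+1$, and since $I$ does not contribute to $n_{j''}$ this yields $n_{j''}^{(b)} \geq k+1$; moreover, by the cascading ordering (within each $m'$-component heads are nondecreasing, and components are ordered by decreasing top value), any head-$j''$ LS with $j'' > j$ that precedes $I$ must sit in an $m'$-component with $m' > m$, so its top $m' > m \geq l'$. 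Each such $j'' \in \{j+1,\ldots,l'\}$ therefore contributes exactly one, giving $|L_{k+1}(l')| = l' - j$ and hence depth $l' - j + 1$ for the new entry.

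Propagating $(B)$ and $(C)$ to $\alpha[b+1]$ is then immediate: at letter $j$ a new lane $L_{k+1}(j)$ is born, matching $n_j^{(b+1)} = k+1$; at each letter $l' > j$ with $l' \leq l$ the entry lands in the already nonempty lane $L_{k+1}(l')$, so $D_{l'}$ is unchanged and continues to agree with $n_{l'}^{(b)} = n_{l'}^{(b+1)}$, and the rank of $I$ among head-$j$ LSs is $k+1$, matching its lane number. The main subtlety, and the step I expect to require the most care, is the component-structure argument showing that the $(k+1)$-th head-$j''$ LS for $j'' > j$ necessarily sits in an earlier $m' > m$ component; this is precisely where the interplay between Lyndon (which guarantees enough head-$j''$ LSs globally) and the cascading ordering (which forces them to appear early) does the key work.
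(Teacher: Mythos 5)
Your proof is correct and follows essentially the same route as the paper's: an induction over the lower subintervals that uses the Lyndon property together with the cascading order (components in decreasing top value, heads nondecreasing within a component) to show that exactly $k$ of the $l'$-lanes have length at least $l'-j+1$ while $|L_{k+1}(l')|=l'-j$, forcing the new entry into lane $k+1$ at depth $l'-j+1$; your explicit invariants $(B)$, $(C)$ and the rigid lane-length formula simply make the paper's terser counting explicit. One small imprecision: for $j''<j$ the Lyndon property only yields $n_{j''}\leq n_j=k$ on the initial segment $\alpha[b]$ (so the $(k+1)$-th head-$j''$ lower subinterval does not precede $I$), not that it fails to exist in all of $\alpha$ --- but the weaker statement is all your argument needs.
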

\begin{proof}
Claim is obvious for the base case of the initial segment $\alpha[i_1]$ ending in the first lower subinterval with head $l$. Now suppose that $I$ is a lower subinterval of $\alpha$ with head $j \leq l$, and suppose that the claim holds for $\alpha_I$. We show that $I$ satisfies the desired properties. Let $I'$ denote the lower subinterval of $\alpha_I$ with head $j$ nearest $I$; if $I'$ does not exist, then $\alpha_I$ has no lower subinterval with head smaller than $j+1$, so all entries of $I$ at most $l$ have lane number one and the claim follows immediately (by the inductive hypothesis, the depth of entry $c \leq l$ of $I$ is one more than the depth of entry $c$ of a lower subinterval with head $j+1$). By the inductive hypothesis, all entries of $I'$ at most $l$ have the same lane number $k$, and the depth of entry $l'$ is $l'-j+1$ for any $j \leq l' \leq l$. Fix $j \leq l' \leq l$. We show that entry $l'$ of $I$ has lane number $k+1$ and depth $l'-j+1$. Let $I''$ denote any lower subinterval of $\alpha_I$ after $I'$. By definition, $I''$ has head $j'' \neq j$. Let $j \leq d \leq l'$. If $j'' < j$, then entry $d$ of $I''$ has depth greater than that of entry $d$ of $I'$ by inductive hypothesis. If $j'' > j$, then entry $d$ of $I''$ has depth less than that of entry $d$ of $I'$ by inductive hypothesis. It follows that the number of $d$-lanes of length at least $d-j+1$ in $\alpha_I$ is $k$; equivalently, the $d$th partition of the rigged configuration corresponding to $\alpha_I$ has exactly $k$ columns of height at least $d-j+1$. Recall that $d-j+1$ is the depth of entry $d$ of $I'$. Moreover, since $\alpha_I$ has more lower subintervals with head $j+1$ than those with head $j$ by the Lyndon property, $\alpha_I$ has some $d$-lanes of length $d-(j+1)+1 = d-j$. Since the head $j$ of $I$ clearly has lane number $k+1$, we deduce that the entry $d$ of $I$ must also have lane number $k+1$ and depth $d-j+1$, for $d = j, j+1, \ldots, l'$ in that order, by Lemma \ref{snake}. This completes the induction.
\end{proof}

We now complete the proof of the two procedures for adding boxes. If a cascading sequence $\gamma$ has entry $g$ and lanes $L, L'$ such that $L' = L \oplus (g)$, we say that $L'$ is the \textbf{lengthening} of $L$, and that we \textbf{lengthen} $L$ to obtain $L'$.
\begin{lem}[Main Lemma]
\label{addboxproof}
Let $\beta$ be both a $(p-1)^*$-plateau and a $(p,q,r)$-plateau, with corresponding rigged configuration $R = (\mu_1,\mu_2,\ldots,\mu_n)$. Label the stretches of $\mu_{p-1}$ from bottom to top as $z_1, z_2, \ldots, z_a$. Then the following is true.
\begin{enumerate}
\item \label{addncworks} Suppose $\mathrm{ht}(z_i) < \min(q,r)$. Let $\beta^@$ denote the cascading sequence obtained from $\beta$ via Procedure \ref{addnoncon}. $\beta^@$ thus obtained is a $(p-1)^*$-plateau and a $(p,\mathrm{ht}(z_{i}),r)$-plateau, and $\beta^@$ corresponds to the rigged configuration obtained after adding $n_i$ noncontributing boxes beneath $z_i$, where $0 \leq n_i \leq |z_i|$, and fixing $\mu_x$ for all $x \geq p$.
\item \label{headrestr} Suppose $\mathrm{ht}(z_i) < \min(q,r)$, and let $0 < a' \leq \mathrm{ht}(z_i)$ be an integer. 
\begin{enumerate}
\item There exists $j_1$ such that $\beta[j_1]$ contains all the lower subintervals with head $p-1$ containing the head of some $p$-lane, and that $\beta[j_1]$ contains no more lower subintervals with head $p-1$ than lower subintervals with head $p-2$. Let such $j_1$ be minimal. Then there exists $j_2 \geq j_1$ such that $\beta[j_2]$ contains no more lower subintervals with head $p-2$ than lower subintervals with head $p-3$. Let such $j_2$ be minimal. Then there exists $j_3 \geq j_2$ such that $\beta[j_3]$ contains no more lower subintervals with head $p-3$ than lower subintervals with head $p-4$. This continues until, for minimal $j_{a'-1}$ there exists $j_{a'} \geq j_{a'-1}$ such that $\beta[j_{a'}]$ contains no more lower subintervals with head $p-a'$ than lower subintervals with head $p-a'-1$.
\item If $I$ is an element of $B_{p-h}$ outside $\beta[j_{p-(p-h)}] = \beta[j_{h}]$ where $1 \leq h \leq \mathrm{ht}(z_i)$, then $I$ has entry $p$ of depth $p-(p-h)+1 = h+1$, entry $p-1$ of depth $p-1-(p-h)+1 = h$, and one common lane number for all entries not exceeding $p$. 
\item If $\widehat{I}$ is an element of $B_{p-1}$ containing the head of some $p$-lane, then all elements of $B_p$ in $\beta_{\widehat{I}}$ are elements of $C_{\mathrm{ht}(z_i)}$. 
\end{enumerate}
\item \label{addconworks} Suppose $\mathrm{ht}(z_i)$ $<$ $r$. 
\begin{enumerate}
\item Suppose $\beta^!$ is the cascading sequence obtained from $\beta$ via the Procedure \ref{addcon}(\ref{addcon1}). $\beta^!$ thus obtained is a $(p-1)^*$-plateau and a $(p,q,\mathrm{ht}(z_{i}))$-plateau, and $\beta^!$ corresponds to the rigged configuration obtained after adding $n_i$ contributing boxes beneath $z_i$ in $\mu_{p-1}$, where $0 \leq n_i \leq |z_i|$, and fixing $\mu_x$ for all $x > p$ as well as fixing the shape of $\mu_p$. 
\item Suppose $\beta^!$ is the cascading sequence obtained from $\beta$ via the Procedure \ref{addcon}(\ref{addcon2}). Then $\beta^!$ thus obtained is a $(p-1)^*$-plateau and a $(p,q,0)$-plateau, and corresponds to the rigged configuration obtained after adding any number $N \in \mathbb{Z}_{\geq0}$ of contributing boxes to the top row of $\mu_{p-1}$, and fixing $\mu_x$ for all $x > p$ as well as fixing the shape of $\mu_p$.
\end{enumerate} 
\end{enumerate}
\end{lem}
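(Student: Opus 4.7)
The plan is to prove the three parts in the order (\ref{headrestr}), (\ref{addncworks}), (\ref{addconworks}), since Part (\ref{headrestr}) gives the structural dictionary (lane numbers and depths) that makes Part (\ref{addncworks}) straightforward, and Part (\ref{addconworks}) is essentially independent of Part (\ref{addncworks}).

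For Part (\ref{headrestr}), I would exploit the Lyndon structure of $\beta$. Because $\beta$ is a $(p-1)^*$-plateau, it is an $l$-plateau for every $l \leq p-1$, so Lemma \ref{lyndon} gives the Lyndon property for each such $l$. Applying Lemma \ref{lyndonimplies} then tells us: whenever a lower subinterval $I$ of $\beta$ has head $j \leq p-1$, all its entries at most $p-1$ share one lane number, and entry $l'$ sits at depth $l'-j+1$. The existence of the nested indices $j_1 \leq j_2 \leq \cdots \leq j_{a'}$ is then a counting argument on running tallies: as we scan $\beta$ left-to-right, property (\ref{noncon free}) of the $(p,q,r)$-plateau forces the final count of head-$(p-h)$ subintervals to equal (below the relevant height threshold) the count of head-$(p-h-1)$ subintervals, and by the Lyndon property the head-$(p-h)$ surplus over head-$(p-h-1)$ is always nonnegative; thus the running difference first hits zero at some minimal $j_h$, and $j_{h-1} \leq j_h$ because each new head-$(p-h-1)$ must be preceded by a matching head-$(p-h)$. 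The depth/lane-number description for $I \in B_{p-h}$ outside $\beta[j_h]$ then follows directly from Lemma \ref{lyndonimplies} applied at both letters $p-h$ and $p-h-1$. Finally, if $\widehat{I} \in B_{p-1}$ contains the head of some $p$-lane, then by Lemma \ref{no head} every $p$-headed lower subinterval preceding $\widehat{I}$ must be paired (under the bracketing scheme of Procedure \ref{addnoncon}) with an earlier $(p-1)$-headed subinterval; in particular such $p$-headed subintervals cannot be the "extra" ones that would be lengthened, so they all lie in $C_{\mathrm{ht}(z_i)}$.

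For Part (\ref{addncworks}), I would induct on $d = 0, 1, \ldots, \mathrm{ht}(z_i)$. At stage $d$, I lengthen the first $n_i$ elements of $\overline{B}_{p-\mathrm{ht}(z_i)+d}$, each of which, by Part (\ref{headrestr})(b), currently terminates at depth $\mathrm{ht}(z_i)-d+1$ inside a lane $L_k(p-\mathrm{ht}(z_i)+d)$. The new entry $p-\mathrm{ht}(z_i)+d-1$ attaches to a previously empty lane of level $p-\mathrm{ht}(z_i)+d-1$ and thereby extends a column of the $(p-\mathrm{ht}(z_i)+d-1)$st partition by one box (Lemma \ref{lane col}). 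The freshly appended entry is not a right endpoint (a later lengthening at the next stage will extend the same lane further, or the stretch has not yet been completely filled), so the box is noncontributing by Lemma \ref{snake} Property (\ref{p three}). After all $\mathrm{ht}(z_i)+1$ stages, the $n_i$ boxes form a full horizontal strip beneath $z_i$ in each of the partitions $\mu_{p-1}, \mu_{p-2}, \ldots, \mu_{p-\mathrm{ht}(z_i)-1}$, i.e. $n_i$ noncontributing boxes have been added beneath $z_i$ in $\mu_{p-1}$, and the stretches of $\mu_x$ for $x \geq p$ are untouched. That $\beta^@$ remains a $(p-1)^*$-plateau follows because we introduced no new right endpoints at levels $\leq p-1$, and the $(p, \mathrm{ht}(z_i), r)$-plateau property comes from the fact that the newly lengthened $(p-1)$-lanes now have length exactly one less than the corresponding $p$-lanes up to the new height threshold $\mathrm{ht}(z_i)$.

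For Part (\ref{addconworks}), appending $n_i$ copies of $(p-\mathrm{ht}(z_i)-1, p-\mathrm{ht}(z_i), \ldots, p-1)$ at the right end of $\beta$ creates $n_i$ new lower subintervals whose rightmost entry $p-1$ is a right endpoint by construction; by Lemma \ref{snake} Property (\ref{p three}) and Lemma \ref{unif con} Property (\ref{single strip}), each such copy contributes exactly one contributing box to the $(p-1)$st partition in a fresh column of height $\mathrm{ht}(z_i)+1$, i.e. directly beneath $z_i$, and these columns are placed strictly left-to-right. The resulting $\beta^!$ stays a $(p-1)^*$-plateau since the only new right endpoints occur at letter $p-1$, and it becomes a $(p,q,\mathrm{ht}(z_i))$-plateau because only $(p-1)$-lanes of length $>\mathrm{ht}(z_i)$ now end at right endpoints. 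The degenerate case (\ref{addcon2}) is the specialization $\mathrm{ht}(z_i)=0$: appending singleton $(p-1)$ subintervals places contributing boxes in the top row of $\mu_{p-1}$ without any column-height constraint, giving a $(p,q,0)$-plateau. The main obstacle, concentrated in Part (\ref{headrestr})(b)-(c), is the careful verification that the bracketing/pairing process in Procedure \ref{addnoncon} coincides with the standard Kashiwara bracketing on the two-letter alphabet $\{p-h-1, p-h\}$, so that the "deleted" (paired) elements are exactly the ones that, if lengthened, would violate either the cascading order or the noncontributing nature of the added box; I would dispatch this by induction on the initial segments $\beta[i]$, tracking at each step how a pair closed at level $p-h$ propagates to a closed pair at level $p-h-1$ via the common lane number guaranteed by Lemma \ref{lyndonimplies}.
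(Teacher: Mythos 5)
Your proposal has two genuine gaps, both tied to the fact that you try to prove Item \ref{headrestr} as a standalone consequence of the Lyndon property and then treat Item \ref{addncworks} as "straightforward." In the paper, Item \ref{headrestr} is not a standalone fact about an arbitrary $(p-1)^*$-plateau that is also a $(p,q,r)$-plateau: it is an inductive invariant, established vacuously (with $j_1=0$) in the base case where $\beta$ is a $p^*$-plateau, and then re-established for $\beta^@$ at the end of each application of the procedures, one stretch at a time by decreasing height. Your "running tally" argument does not deliver Item \ref{headrestr}(a): the Lyndon property only gives that every prefix has at least as many head-$(p-1)$ subintervals as head-$(p-2)$ ones, whereas you need a prefix $\beta[j_1]$ that simultaneously contains \emph{all} of $\widehat{B}_{p-1}$ and has head-$(p-1)$ count $\leq$ head-$(p-2)$ count. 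Since in general $|B_{p-1}| > |B_{p-2}|$ (the surplus corresponds to columns of $\mu_{p-1}$ of height one), the running difference need never return to zero after $\widehat{B}_{p-1}$ is exhausted; that it does is a structural fact about how $\beta$ was built by the earlier stages of the algorithm, which is exactly what the induction supplies.

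For Item \ref{addncworks}, your sketch omits the two points that constitute essentially all of the paper's proof. First, executability: you must show that after the pairing/deletion there remain at least $n_i$ elements in each $\overline{B}_{p-\mathrm{ht}(z_i)+d}$ to lengthen; the paper proves this via Lemma \ref{lyndon}, Lemma \ref{lyndonimplies}, and a separate pairing argument for the pair $B_p$, $B_{p-1}$ using Item \ref{headrestr}(c). Second, and more seriously, you assert without argument that "the stretches of $\mu_x$ for $x\geq p$ are untouched." Lengthening a lower subinterval changes its head and hence potentially the lane numbers of \emph{all} its entries, including those $\geq p$; the entire purpose of the deletion scheme is to guarantee that entry $p$ of every lower subinterval keeps its lane number, and the paper verifies this by a lengthy lower-subinterval-by-lower-subinterval comparison of $\beta$ and $\beta^@$ (split into the cases $G_d=(G'_d)_+$ versus $G_d=G'_d$, with further subcases on $\min G'_d$), preceded by proving that $\beta^@$ itself satisfies the Lyndon property (Claims \ref{(claim1)} and \ref{(claim2)}). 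None of this is dispatched by observing that the bracketing resembles Kashiwara bracketing. Your treatment of Item \ref{addconworks} is essentially the paper's, modulo the missing depth computation showing the appended entries $p-1$ land at depth exactly $\mathrm{ht}(z_i)+1$.
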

\begin{rem}
\label{addbox rem}
Keep the following in mind for the proof that follows.
\begin{enumerate}
\item Item \ref{headrestr} is a technical fact about $\beta$ that will be used in the proof of Procedure \ref{addnoncon} (i.e. Item \ref{addncworks}) in an induction argument.
\item Although adding boxes beneath $z_i$ changes the partition, it does not change the stretches $z_x$ for any $x>i$, so we will continue referring to the stretches $z_x$ even though they may well belong to a partition different from $\mu_{p-1}$.
\item Convention: Let $\alpha, \alpha'$ be cascading sequences with corresponding rigged configurations $S, S'$ respectively. We say that $\alpha[i]$ and $\alpha'[i]$ \textit{have the same $l$-lanes} or \textit{have identical $l$-lanes} if the following holds: $l$ appears as an entry in $\alpha[i]$ the same number of times as $l$ appears as an entry in $\alpha'[i]$, and the $j$th occurrence of $l$ in $\alpha[i]$ has the same lane number as the $j$th occurrence of $l$ in $\alpha'[i]$. In particular, this implies that the $l$th partitions of $S[i]$ and $S'[i]$ are identical.
\end{enumerate}
\end{rem}
\begin{proof}
We prove these items by induction on $i$ (i.e. one stretch at a time by decreasing height); observe that conditions on $\beta$ become less restrictive with smaller $q$ and $r$, while the number of stretches beneath which boxes can be added decreases. In the base case where $\beta$ is a $p^*$-plateau, Item \ref{headrestr} holds vacuously with $j_1 = 0$, by Lemma \ref{lyndonimplies}.


In the general case, suppose $\beta$ is both a $(p-1)^*$-plateau and a $(p,q,r)$-plateau. We will prove Item \ref{addconworks}, executability of Procedure \ref{addnoncon}, Item \ref{addncworks}, and finally Item \ref{headrestr}, in that order. \\

\textit{Proof of Item \ref{addconworks}} \\
We first prove Item \ref{addconworks}, that Procedure \ref{addcon} works as stated. Suppose $\mathrm{ht}(z_i)$ $<$ $r$. By Lemma \ref{lyndonimplies}, if $I$ is a lower subinterval whose entry $p-1$ has depth $\mathrm{ht}(z_{i-1})$, then $I \in B_{p-\mathrm{ht}(z_{i-1})}$. Since $p-\mathrm{ht}(z_{i-1}) \leq p-\mathrm{ht}(z_i)-1$, and since $\beta$ is a $(p-1)^*$-plateau, the entry $p-1$ of any of the added $(p-\mathrm{ht}(z_i)-1,p-\mathrm{ht}(z_i),\ldots,p-1)$ has depth not exceeding $p-1 - (p-\mathrm{ht}(z_{i-1})) +1 = \mathrm{ht}(z_{i-1})$. Notice that $\beta$ has $|B_{p-\mathrm{ht}(z_i)}| = |B_{p-\mathrm{ht}(z_{i-1})}| + |z_i|$ and $|B_{p-\mathrm{ht}(z_i)-1}| = |B_{p-\mathrm{ht}(z_{i-1})}|$ by definition. Since $|B_{p-\mathrm{ht}(z_i)}| = |B_{p-\mathrm{ht}(z_i)-1}| + |z_i|$, and since $\beta$ is a $(p-1)^*$-plateau, the entry $p-1$ of any of the added $(p-\mathrm{ht}(z_i)-1,p-\mathrm{ht}(z_i),\ldots,p-1)$ has depth exceeding $p-1 - (p-\mathrm{ht}(z_i)) +1 = \mathrm{ht}(z_i)$. By Lemma \ref{unif con}, the entries $p-1$ of the $n_i$ added copies of $(p-\mathrm{ht}(z_i)-1,p-\mathrm{ht}(z_i),\ldots,p-1)$ must occupy $n_i$ distinct columns, so it follows that these entries $p-1$ must have depth $\mathrm{ht}(z_i)+1$, as desired. Clearly, $\beta^!$ is a $(p-1)^*$-plateau as well.

That Part \ref{addcon2} of this procedure works for adding boxes to the top row is obvious from Lemma \ref{snake}. Finally, the property of Item \ref{headrestr} is clearly preserved by Procedure \ref{addcon}. This concludes our proof of Item \ref{addconworks}. \\

Now assume that $\mathrm{ht}(z_i) < \min(q,r)$, for which Item \ref{headrestr} holds for $\beta$. We show that Items \ref{addncworks} and \ref{headrestr} hold for $\beta^@$. Let $\widehat{B}_{p-1}$ denote the set of elements of $B_{p-1}$ containing the head of some $p$-lane. Let $B^@_v$ denote the number of lower subintervals of $\beta^@$ with head $v$. Let $\widehat{B}^@_{p-1}$ denote the set of elements of $B^@_{p-1}$ containing the head of some $p$-lane. 

\textit{Proof of Executability} \\
We first prove that Procedure \ref{addnoncon} is executable. By Lemma \ref{lyndon}, it immediately follows that the deletions (recall that a deleted lower subinterval is ultimately fixed by the procedure) and lengthening specified in Procedure \ref{addnoncon} are executable for all pairs $B_{p-j}, B_{p-j-1}$ for all $j \geq 1$. More precisely, by Lemma \ref{lyndonimplies}, after deleting all the elements of $C_{\mathrm{ht}(z_i)}$ from $B_{p-\mathrm{ht}(z_i)}$, there will be exactly $|z_i|$ elements of $B_{p-\mathrm{ht}(z_i)}$ remaining; after deleting all the elements of $C_{\mathrm{ht}(z_i)}$ from $B_{p-\mathrm{ht}(z_i)+1}$, there will be at least $|z_i|$ elements of $B_{p-\mathrm{ht}(z_i)+1}$ remaining; after deleting all the elements of $C_{\mathrm{ht}(z_i)}$ from $B_{p-\mathrm{ht}(z_i)+2}$, there will be at least $|z_i|$ elements of $B_{p-\mathrm{ht}(z_i)+2}$ remaining; and so on. 

We now verify that the deletion and lengthening are executable for the pair $B_{p}, B_{p-1}$. By Item \ref{headrestr} and Lemma \ref{lyndon}, all elements of $\widehat{B}_{p-1}$ are elements of $C_{\mathrm{ht}(z_i)}$. By Lemma \ref{lyndonimplies}, no element of $B_l$ contains the head of some $p$-lane or the head of some $(p-1)$-lane, for any $l < p-1$. Hence only the elements of $\widehat{B}_{p-1}$ and $B_{p}$ contain the head of some $p$-lane. It follows that any initial segment of $\beta$ contains no fewer elements of $B_p$ than elements of $B_{p-1}-\widehat{B}_{p-1}$; if an initial segment contained fewer elements of $B_p$ than elements of $B_{p-1}-\widehat{B}_{p-1}$, then some of the latter elements would have to belong to $\widehat{B}_{p-1}$, a contradiction. 

Let $\tilde{I} \in \widehat{B}_{p-1}$. By Lemma \ref{snake}, $\beta_{\tilde{I}}$ has the same number of $p$-lanes and $(p-1)$-lanes. By Item \ref{headrestr}, every $I_1 \in B_p$ in $\beta_{\tilde{I}}$ belongs to $C_{\mathrm{ht}(z_i)}$. If $\tilde{I}$ is the leftmost element of $\widehat{B}_{p-1}$, then clearly every $I_1 \in B_p$ in $\beta_{\tilde{I}}$ must be paired with an $I'_1 \in B_{p-1}-\widehat{B}_{p-1}$ in $\beta_{\tilde{I}}$. In general, suppose that $\bar{I} \in \widehat{B}_{p-1}$ precedes $\tilde{I}$ in $\beta$, where every $I_1 \in B_p$ in $\beta_{\bar{I}}$ is paired with an $I'_1 \in B_{p-1}-\widehat{B}_{p-1}$ in $\beta_{\bar{I}}$. Since $\beta_{\tilde{I}}$ has the same number of $p$-lanes and $(p-1)$-lanes, and since no elements of $\widehat{B}_{p-1}$ exist after $\bar{I}$ in $\beta_{\tilde{I}}$, $B_p$ has the same number of elements after $\bar{I}$ in $\beta_{\tilde{I}}$ as those of $B_{p-1}$ after $\bar{I}$ in $\beta_{\tilde{I}}$. Hence every $I_1 \in B_p$ after $\bar{I}$ in $\beta_{\tilde{I}}$ must be paired with an $I'_1 \in B_{p-1}-\widehat{B}_{p-1}$ after $\bar{I}$ in $\beta_{\tilde{I}}$. Therefore, inductively we conclude that every element of $B_p \cap C_{\mathrm{ht}(z_i)}$ must be paired with exactly one element of $B_{p-1}-\widehat{B}_{p-1}$. 
This shows that, after deleting all the elements of $C_{\mathrm{ht}(z_i)}$ from $B_p$, there are at least $|z_i|$ elements remaining in $B_p$ that can be lengthened, because $B_{p-1}-\widehat{B}_{p-1} \supset B_{p-1}-C_{\mathrm{ht}(z_i)}$. This shows that Procedure \ref{addnoncon} is executable.

\textit{$\beta^@$ satisfies the Lyndon property} \\
We first show that $\beta^@$ satisfies the Lyndon property for all letters $l \leq p-1$, 
so that we can apply Lemma \ref{lyndonimplies} to $\beta^@$ later on. Since $\beta^@$ clearly satisfies the Lyndon property for all letters $l \leq p-\mathrm{ht}(z_i)-1$, we only need to consider $l > p-\mathrm{ht}(z_i)-1$. Let $m$ be any positive integer. We analyze $\beta^@[m]$ using $\beta[m]$.

Suppose $p-\mathrm{ht}(z_i)-1 < l' \leq p-1$. By Lemma \ref{lyndon}, $\beta[m]$ has no fewer elements of $B_{l'}$ than elements of $B_{l'-1}$. 
\begin{clm}
\label{(claim1)}
$\beta[m]$ has no fewer elements of $B_{l'} - C_{\mathrm{ht}(z_i)}$ than elements of $B_{l'-1} - C_{\mathrm{ht}(z_i)}$.
\end{clm}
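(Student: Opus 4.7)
The plan is to reduce Claim \ref{(claim1)} to a combinatorial statement about the Lyndon sub-word of $\beta$ obtained by retaining only the elements of $B_{l'}$ (call these ``$a$'s'') and $B_{l'-1}$ (call these ``$b$'s''). By Lemma \ref{lyndon} this sub-word is Lyndon, in the sense that every prefix has nonnegative Lyndon excess $L(m) := |B_{l'} \cap \beta[m]| - |B_{l'-1} \cap \beta[m]|$. A direct accounting shows
\[
|(B_{l'}-C_{\mathrm{ht}(z_i)})\cap\beta[m]| - |(B_{l'-1}-C_{\mathrm{ht}(z_i)})\cap\beta[m]| = L(m) - M(m),
\]
where $M(m)$ is the number of Procedure-pairs $(\breve{I}_u^{l'}, \breve{I}_u^{l'-1})$ that straddle $m$ (their $a$ lies in $\beta[m]$ but their $b$ lies outside). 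The Claim is therefore equivalent to establishing $L(m)\geq M(m)$ for every $m$.

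I would next recast the relevant portion of Procedure \ref{addnoncon} as a variant of stack bracketing: scan $\beta$ left-to-right, push every $a$ onto a stack, pop the top of the stack at each marked $b = \breve{I}_u^{l'-1}$, and simply skip every unmarked $b$. Since $\breve{I}_u^{l'}$ is defined as the nearest unpaired $B_{l'}$-element to the left of $\breve{I}_u^{l'-1}$, the element popped at step $u$ is forced to be $\breve{I}_u^{l'}$. Although the resulting pairing can differ from standard stack bracketing of $B_{l'}$ against all of $B_{l'-1}$ (and, at higher levels, can even produce crossing pairs), the set of marked $a$'s agrees with the popped set of the standard stack bracketing of $B_{l'}$ against the subset $\{\breve{I}_u^{l'-1}\}_u$; this independence of the processing order is verified by a greedy matroid-type swap. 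It is in this cleaner framework that the inequality $L(m)\geq M(m)$ is most naturally analyzed, because only the marked \emph{sets} (not the individual pairings) affect $\overline{B}_{l'}$ and $\overline{B}_{l'-1}$.

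With this setup in hand, I would induct on $u$, tracking $M^{(u)}(m)$, the number of the first $u$ marked pairs straddling $m$. The base case $u=0$ is immediate from Lemma \ref{lyndon} since $M^{(0)}\equiv 0$. For the inductive step, adding pair $(a^{u+1},b^{u+1})$ changes $M$ by $+1$ only for $m \in [a^{u+1}, b^{u+1})$, so the inductive hypothesis directly handles all other values of $m$. On the middle interval I need the strict inequality $L(m)-M^{(u)}(m)\geq 1$, and this is the heart of the argument. The structural input is twofold: first, $a^{u+1}$ itself, being the nearest unpaired $B_{l'}$-element to the left of $b^{u+1}$, creates a unit upward jump in $L-M^{(u)}$ at $m=a^{u+1}$ from a nonnegative value (supplied by the inductive hypothesis at $m=a^{u+1}-1$); second, every $B_{l'}$-element strictly between $a^{u+1}$ and $b^{u+1}$ is already marked by one of the first $u$ pairs, so $L-M^{(u)}$ cannot drop there because of a new unmarked $a$.

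The main obstacle will be controlling unmarked $b$'s that appear inside the open interval $(a^{u+1},b^{u+1})$, each of which could in principle knock $L-M^{(u)}$ down by $1$. To handle them I would use the chain structure from Lemma \ref{snake} together with Lemma \ref{lyndonimplies}: an unmarked $b$ in this interval is either a future anchor $\breve{I}_{u'}^{l'-1}$ with $u' > u+1$ or else a $B_{l'-1}$-element never destined to be an anchor, and in either case the propagation of lane indices across levels forces the appearance of a compensating unmarked or previously-marked $B_{l'}$-element earlier in the same interval, preserving the strict inequality. Once this bookkeeping on the middle interval is completed, the induction closes and the Claim follows for all $m$.
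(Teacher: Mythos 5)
Your reduction of the Claim to the prefix inequality $L(m)\geq M(m)$ is sound for $l'>p-\mathrm{ht}(z_i)$ (for $l'=p-\mathrm{ht}(z_i)$ the base level $B_{p-\mathrm{ht}(z_i)-1}$ contains no deleted elements, so $|B_{l'-1}\cap C_{\mathrm{ht}(z_i)}\cap\beta[m]|=0$ and your accounting identity needs a separate check), and the localization of the inductive step to $m\in[a^{u+1},b^{u+1})$ is correct. The genuine gap is exactly at the point you yourself flag as the heart of the argument, and the mechanism you propose there cannot work. Track $L-M^{(u)}$ as $m$ sweeps rightward across the open interval $(a^{u+1},b^{u+1})$: a marked $a$ contributes net $0$ (its $+1$ to $L$ is cancelled by the $+1$ to $M^{(u)}$ as its pair begins to straddle $m$), a $b$ closing one of the first $u$ pairs contributes net $0$, and a ``bad'' $b$ (a non-anchor, or an anchor processed after step $u+1$) contributes $-1$; since every $B_{l'}$-element strictly inside the interval is already marked, no event inside the interval contributes $+1$. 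Hence $L-M^{(u)}$ is non-increasing across the interval, starting from a value your induction only guarantees to be $\geq 1$, and the bound survives only if the open interval contains no bad $b$ whatsoever, or if you strengthen the inductive hypothesis to a lower bound that grows with the number of bad $b$'s ahead. Your proposed fix --- a ``compensating unmarked or previously-marked $B_{l'}$-element earlier in the same interval'' --- cannot supply the missing $+1$: an unmarked $B_{l'}$-element cannot occur strictly inside the interval, and a previously-marked one contributes $0$ in your own bookkeeping. Ruling out bad $b$'s requires using how the anchor set $B_{l'-1}\cap C_{\mathrm{ht}(z_i)}$ is itself produced by the pairing one level down; the appeal to ``propagation of lane indices'' via Lemmas \ref{snake} and \ref{lyndonimplies} is a placeholder, not an argument. (The order-independence of the marked set, which you invoke via a ``greedy matroid-type swap,'' is likewise asserted rather than proved, though it is secondary.)

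For comparison, the paper does not induct over pairs at all: it assumes $|B_{l'}-C_{\mathrm{ht}(z_i)}|<|B_{l'-1}-C_{\mathrm{ht}(z_i)}|$ in $\beta[m]$, deduces $|B_{l'}\cap C_{\mathrm{ht}(z_i)}|>|B_{l'-1}\cap C_{\mathrm{ht}(z_i)}|$ there from Lemma \ref{lyndon}, extends the prefix to the first $\beta[m+m']$ at which these two intersection counts re-balance, and argues that at that point $|B_{l'}|<|B_{l'-1}|$, contradicting Lemma \ref{lyndon}. So your route is genuinely different in structure, but in its present form it is not a proof.
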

\begin{proof}
Suppose not. Then we must have $|B_{l'} \cap C_{\mathrm{ht}(z_i)}| > |B_{l'-1} \cap C_{\mathrm{ht}(z_i)}|$ in $\beta[m]$. Pick the minimal $m'$ such that $|B_{l'} \cap C_{\mathrm{ht}(z_i)}| = |B_{l'-1} \cap C_{\mathrm{ht}(z_i)}|$ in $\beta[m+m']$. By definition, any element of $B_{l'}$ in $\beta[m+m']$ outside $\beta[m]$ must be an element of $C_{\mathrm{ht}(z_i)}$ and must be paired with an element of $B_{l'-1} \cap C_{\mathrm{ht}(z_i)}$ to its right in $\beta[m+m']$, and $\beta[m+m']$ must end in an element of $B_{l'-1} \cap C_{\mathrm{ht}(z_i)}$. Notice that $|B_{l'} \cap C_{\mathrm{ht}(z_i)}| = |B_{l'-1} \cap C_{\mathrm{ht}(z_i)}|$ in $\beta[m+m']$ but $|B_{l'} - C_{\mathrm{ht}(z_i)}| < |B_{l'-1} - C_{\mathrm{ht}(z_i)}|$ in $\beta[m+m']$ due to the assumption that $|B_{l'} - C_{\mathrm{ht}(z_i)}| < |B_{l'-1} - C_{\mathrm{ht}(z_i)}|$ in $\beta[m]$. This implies that $\beta[m+m']$ has fewer elements of $B_{l'}$ than elements of $B_{l'-1}$, which contradicts the Lyndon property for $\beta$, and the claim is proved. 
\end{proof}

Suppose $l' = p-1$. By Item \ref{headrestr}, we have $\widehat{B}_{p-1} \subset C_{\mathrm{ht}(z_i)}$, so by definition only elements of $B_{p-1} - C_{\mathrm{ht}(z_i)} \subset B_{p-1} - \widehat{B}_{p-1}$ can be lengthened by Procedure \ref{addnoncon}. By Lemma \ref{snake} and Lemma \ref{no head}, any lower subinterval of $\beta$ whose entry $p$ has depth one must have either $p$ or $p-1$ as head. It follows that $\beta[j]$ has no fewer elements of $B_p$ than elements of $B_{p-1}-\widehat{B}_{p-1}$ for any $j$. 
\begin{clm}
\label{(claim2)}
$\beta[m]$ has no fewer elements of $B_p - C_{\mathrm{ht}(z_i)}$ than elements of $B_{p-1} - C_{\mathrm{ht}(z_i)}$.
\end{clm}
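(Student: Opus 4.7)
The plan is to mirror the proof of Claim \ref{(claim1)}, with the baseline ``$\beta[j]$ has no fewer elements of $B_p$ than of $B_{p-1}-\widehat{B}_{p-1}$'' (established just above Claim \ref{(claim2)}) playing the role played by the Lyndon property in that proof. The containment $\widehat{B}_{p-1} \subset C_{\mathrm{ht}(z_i)}$, which follows from Item \ref{headrestr} applied to $\beta$, is the bridge that lets us pass between the ``$B_{p-1}$'' of the baseline and the ``$B_{p-1}-\widehat{B}_{p-1}$'' appearing in the Procedure \ref{addnoncon} pairing.

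Assume for contradiction that $|B_p - C_{\mathrm{ht}(z_i)}| < |B_{p-1} - C_{\mathrm{ht}(z_i)}|$ in $\beta[m]$. Since $\widehat{B}_{p-1} \subset C_{\mathrm{ht}(z_i)}$ gives $|B_{p-1} - C_{\mathrm{ht}(z_i)}| \leq |B_{p-1} - \widehat{B}_{p-1}|$, the baseline combined with the assumption yields $|B_p \cap C_{\mathrm{ht}(z_i)}| > |(B_{p-1}-\widehat{B}_{p-1}) \cap C_{\mathrm{ht}(z_i)}|$ in $\beta[m]$. I would then pick the minimal $m'$ for which the analogous equality holds in $\beta[m+m']$, using the executability pairing (every element of $B_p \cap C_{\mathrm{ht}(z_i)}$ is matched with a distinct element of $B_{p-1}-\widehat{B}_{p-1}$ to its right) to ensure such $m'$ exists. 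By minimality, every element of $B_p$ added between $\beta[m]$ and $\beta[m+m']$ lies in $C_{\mathrm{ht}(z_i)}$ and is paired with an element of $(B_{p-1}-\widehat{B}_{p-1}) \cap C_{\mathrm{ht}(z_i)}$ to its right in $\beta[m+m']$, and $\beta[m+m']$ ends in such a matched $B_{p-1}$ element. The strict inequality $|B_p - C_{\mathrm{ht}(z_i)}| < |B_{p-1} - C_{\mathrm{ht}(z_i)}|$ therefore persists to $\beta[m+m']$; combined with the equality enforced at $m'$, one reads off $|B_p| < |B_{p-1}-\widehat{B}_{p-1}|$ in $\beta[m+m']$, directly contradicting the baseline.

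The main obstacle is isolating the correct equality to minimise over. Globally $|B_p \cap C_{\mathrm{ht}(z_i)}|$ and $|B_{p-1} \cap C_{\mathrm{ht}(z_i)}|$ both equal $w$, whereas $|(B_{p-1}-\widehat{B}_{p-1}) \cap C_{\mathrm{ht}(z_i)}|$ equals $w - |\widehat{B}_{p-1}|$, so the naive transcription of Claim \ref{(claim1)}'s minimality condition does not close out. The correct equality has to invoke the executability pairing rather than the simpler $\breve{I}_j^p$--$\breve{I}_j^{p-1}$ pairing built into Procedure \ref{addnoncon}; verifying that the minimal-$m'$ structure forces ``no new element of $B_p - C_{\mathrm{ht}(z_i)}$ is added in $\beta[m+m'] \setminus \beta[m]$'' via this matching is the delicate step.
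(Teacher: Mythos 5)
Your proposal is correct and follows essentially the same route as the paper's own proof: pass from the assumed failure to $|B_p \cap C_{\mathrm{ht}(z_i)}| > |(B_{p-1}-\widehat{B}_{p-1}) \cap C_{\mathrm{ht}(z_i)}|$ in $\beta[m]$ via the baseline and $\widehat{B}_{p-1} \subset C_{\mathrm{ht}(z_i)}$, take the minimal extension $\beta[m+m']$ restoring that equality, observe that all intervening elements of $B_p$ lie in $C_{\mathrm{ht}(z_i)}$ and are paired rightward into $(B_{p-1}-\widehat{B}_{p-1}) \cap C_{\mathrm{ht}(z_i)}$ (the paper secures this by noting all of $\widehat{B}_{p-1}$ already sits inside $\beta[j_1] \subset \beta[m]$), and add the two inequalities to contradict $|B_p| \geq |B_{p-1}-\widehat{B}_{p-1}|$. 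The delicate point you flag — that the equality to minimise over must be the one involving $B_{p-1}-\widehat{B}_{p-1}$ and the executability pairing rather than the raw Procedure pairing — is exactly the adjustment the paper makes relative to Claim 1.
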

\begin{proof}
Suppose not. Then we must have $|B_p \cap C_{\mathrm{ht}(z_i)}| > |(B_{p-1}-\widehat{B}_{p-1}) \cap C_{\mathrm{ht}(z_i)}|$ in $\beta[m]$. Pick the minimal $b'$ such that $|B_p \cap C_{\mathrm{ht}(z_i)}| = |(B_{p-1}-\widehat{B}_{p-1}) \cap C_{\mathrm{ht}(z_i)}|$ in $\beta[m+b']$. By Item \ref{headrestr}, all of $\widehat{B}_{p-1}$ is contained inside $\beta[j_1]$, and $\beta[j_1]$ contains no elements of $B_{p-1} - C_{\mathrm{ht}(z_i)}$. Thus, we must have $j_1 < m$, and there are no elements of $\widehat{B}_{p-1}$ outside $\beta[m]$. By definition, any element of $B_p$ in $\beta[m+b']$ outside $\beta[m]$ must be an element of $C_{\mathrm{ht}(z_i)}$ and must be paired with an element of $B_{p-1} \cap C_{\mathrm{ht}(z_i)}$ to its right in $\beta[m+b']$. Notice that $|B_p \cap C_{\mathrm{ht}(z_i)}| = |(B_{p-1}-\widehat{B}_{p-1}) \cap C_{\mathrm{ht}(z_i)}|$ in $\beta[m+b']$ but $|B_{p} - C_{\mathrm{ht}(z_i)}| < |(B_{p-1}-\widehat{B}_{p-1}) - C_{\mathrm{ht}(z_i)}|$ in $\beta[m+b']$ due to the assumption that $|B_p - C_{\mathrm{ht}(z_i)}| < |B_{p-1} - C_{\mathrm{ht}(z_i)}|$ in $\beta[m]$. This implies that $|B_p| < |B_{p-1}-\widehat{B}_{p-1}|$ in $\beta[m+b']$, which is a contradiction, and the claim is proved.
\end{proof}

Now let $p-\mathrm{ht}(z_i)-1 < l'' \leq p-1$. By Lemma \ref{lyndon} and definition of pairing, $|B_{l''} \cap C_{\mathrm{ht}(z_i)}| \geq |B_{l''-1} \cap C_{\mathrm{ht}(z_i)}|$ in $\beta[m]$. By Claim \ref{(claim1)} for $l'' < p-1$ or Claim \ref{(claim2)} for $l'' = p-1$, we have $|B_{l''+1} - C_{\mathrm{ht}(z_i)}| \geq |B_{l''} - C_{\mathrm{ht}(z_i)}|$ and $|B_{l''} - C_{\mathrm{ht}(z_i)}| \geq |B_{l''-1} - C_{\mathrm{ht}(z_i)}|$ in $\beta[m]$. After running Procedure \ref{addnoncon}, we compare $|B_{l''}|$ in $\beta[m]$ with $|B_{l''}^@|$ in $\beta^@[m]$ and $|B_{l''-1}|$ in $\beta[m]$ with $|B_{l''-1}^@|$ in $\beta^@[m]$. If $|B_{l''-1} - C_{\mathrm{ht}(z_i)}| \geq n_i$ in $\beta[m]$, then Procedure \ref{addnoncon} forms $\beta^@[m]$ by lengthening $n_i$ elements of $B_{l''+1} - C_{\mathrm{ht}(z_i)}$, $n_i$ elements of $B_{l''} - C_{\mathrm{ht}(z_i)}$, and $n_i$ elements of $B_{l''-1} - C_{\mathrm{ht}(z_i)}$ in $\beta[m]$, and thus $B_{l''}$ in $\beta[m]$, $B_{l''}^@$ in $\beta^@[m]$ are equinumerous and $B_{l''-1}$ in $\beta[m]$ , $B_{l''-1}^@$ in $\beta^@[m]$ are equinumerous. If $|B_{l''-1} - C_{\mathrm{ht}(z_i)}| < n_i$ in $\beta[m]$, then Procedure \ref{addnoncon} forms $\beta^@[m]$ by lengthening $a_1$ elements of $B_{l''+1} - C_{\mathrm{ht}(z_i)}$, $a_2$ elements of $B_{l''} - C_{\mathrm{ht}(z_i)}$, and all elements of $B_{l''-1} - C_{\mathrm{ht}(z_i)}$ in $\beta[m]$, where $a_2 \leq a_1$. Therefore, in all cases we have $|B_{l''}^@| \geq |B_{l''-1}^@|$ in $\beta^@[m]$. Since $m$ was arbitrary, this completes the proof that $\beta^@$ satisfies the Lyndon property for all letters $l \leq p-1$.

\textit{Proof of Item \ref{addncworks}} \\
We show inductively that $l$-lanes of $\beta$ and $\beta^@$ are identical for all $l \geq p$, by comparing $\beta$ and $\beta^@$ one lower subinterval at a time, from left to right. In this case, given a lower subinterval or a portion of $\beta$, it will be obvious what we mean by the \textit{corresponding lower subinterval} or \textit{corresponding portion} of $\beta^{@}$, and vice versa.

Let $G_1$ denote the first lower subinterval of $\beta$ to be lengthened, and let $g'_1$ be the lane number of the head $p$ of $G_1$. By definition, the lanes of $\beta^@_{(G_1)_+}$ are identical to those of $\beta_{G_1}$. To determine the number of $(p-1)$-lanes in $\beta_{G_1}$, we need only determine the number of elements of $B_{p-1}$ in $\beta_{G_1}$, since $\beta^@_{(G_1)_+}$ and $\beta_{G_1}$ are identical, and since $\beta_{G_1}$ is a $(p-1)$-plateau. By Lemma \ref{snake}, any lower subinterval of $\beta^@_{(G_1)_+}$ whose entry $p$ has depth one must have either $p$ or $p-1$ as head, since $\beta^@_{(G_1)_+}$ is a $(p-1)$-plateau. Let $J$ denote the rightmost element of $\widehat{B}_{p-1}$ in $\beta_{G_1}$. The entries $p-1, p$ of $J$ have the same lane number, by Lemma \ref{snake}. All elements of $B_p$ in $\beta_{G_1}$ right of $J$ must be elements of $C_{\mathrm{ht}(z_i)}$ by definition, so they must 
be paired with the same number of elements of $B_{p-1} \cap C_{\mathrm{ht}(z_i)}$ in $\beta_{G_1}$ (which by definition do not contain the head of any $p$-lane). Since $\beta_{G_1}$ has the same number $g'_1-1$ of elements of $B_{p-1}$ as those of $B_p$, it follows that $(G_1)_+$ has entry $p-1$ with lane number $g'_1$, so its entry $p$ has lane number $g'_1$ as well.

Let $G_d$ be a lower subinterval of $\beta^@$ and let $G'_d$ denote the lower subinterval of $\beta$ corresponding to $G_d$. For our inductive hypothesis suppose that the $l$-lanes of $\beta^@_{G_d}$ are identical to those of $\beta_{G'_d}$ for all $l \geq p$. We show that the action of $G_d$ preserves this property; in fact, it suffices to show that entry $p$ has the same lane number in $G_d$ and $G'_d$. We treat separately the case that $G'_d$, $G_d$ are identical and the case that $G_d = (G'_d)_+$. Recall that both $\beta$ and $\beta^@$ contain a copy of $C_{\mathrm{ht}(z_i)}$, the context will make it clear which copy we refer to.

Consider the case that $G_d = (G'_d)_+$ and $\min G'_d = p$. Let $g^*_d$ be the lane number of the head $p$ of $G'_d$. Since $\beta^@$ satisfies the Lyndon property, $\beta^@_{G_d}$ is a $(p-1)^*$-plateau by Lemma \ref{lyndonimplies}, so the number of $(p-1)$-lanes in $\beta^@_{G_d}$ is the number of elements of $B_{p-1}$ in $\beta^@_{G_d}$. By Lemma \ref{snake}, any lower subinterval of $\beta^@_{G_d}$ whose entry $p$ has depth one must have either $p$ or $p-1$ as head, since $\beta^@_{G_d}$ is a $(p-1)^*$-plateau. Let $J_d$ denote the rightmost element of $\widehat{B}^@_{p-1}$ in $\beta^@_{G_d}$. The entries $p-1, p$ of $J_d$ have the same lane number, by Lemma \ref{snake}. All elements of $B^@_p$ in $\beta^@_{G_d}$ right of $J_d$ must be elements of $C_{\mathrm{ht}(z_i)}$ by definition (since they were fixed by Procedure \ref{addnoncon}), so they must be paired with the same number of elements of $B_{p-1} \cap C_{\mathrm{ht}(z_i)}$ in $\beta^@_{G_d}$ (which by definition do not contain the head of any $p$-lane). 
Since $\beta^@_{G_d}$ has the same number $g^*_d-1$ of $(p-1)$-lanes as $p$-lanes, it follows that $G_d$ has entry $p-1$ with lane number $g^*_d$, so its entry $p$ has lane number $g^*_d$ as well by the inductive hypothesis.

Consider the case that $G_d = (G'_d)_+$ and $\min G'_d \leq p-1$. Let $g_d$ be the lane number of the head of $G'_d$. Since $\beta^@$ satisfies the Lyndon property, $\beta^@_{G_d}$ is a $(p-1)^*$-plateau by Lemma \ref{lyndonimplies}, so the number of $(\min G'_d-1)$-lanes in $\beta^@_{G_d}$ is the number of elements of $B_{\min G_d}$ in $\beta^@_{G_d}$. Similarly, the number of $(\min G'_d)$-lanes in $\beta_{G'_d}$ is the number of elements of $B_{\min G'_d}$ in $\beta_{G'_d}$. The elements of $B_{\min G'_d} - C_{\mathrm{ht}(z_i)}$ in $\beta_{G'_d}$ are lengthened by Procedure \ref{addnoncon}, while the elements of $B_{\min G'_d} \cap C_{\mathrm{ht}(z_i)}$ in $\beta^@_{G_d}$ are paired with the same number of elements of $B_{\min G'_d -1} \cap C_{\mathrm{ht}(z_i)}$ in $\beta^@_{G_d}$ by definition. In addition, all elements of $B_{\min G'_d -1} - C_{\mathrm{ht}(z_i)}$ in $\beta_{G'_d}$ are lengthened by Procedure \ref{addnoncon} by Claim \ref{(claim1)}, since $|B_{\min G'_d} - C_{\mathrm{ht}(z_i)}| < n_i$ in $\beta_{G'_d}$.
Since $\beta^@_{G_d}$ has the same number $g_d-1$ of elements of $B^@_{\min G'_d -1}$ as elements of $B_{\min G'_d}$ in $\beta_{G'_d}$, it follows that $G_d$ has entry $\min G_d = \min G'_d -1$ with lane number $g_d$, so its entry $\min G'_d$ has lane number $g_d$ as well by the inductive hypothesis.

Consider the case that $G'_d$, $G_d$ are identical and $\min G_d < p-\mathrm{ht}(z_i)-1$. Since Procedure \ref{addnoncon} lengthens no lower subintervals with head smaller than $p-\mathrm{ht}(z_i)$, the $(\min G_d)$-lanes are identical in $\beta^@_{G_d}$ and $\beta_{G'_d}$. Thus, the entry $\min G_d$ has the same lane number $h_d$ in $G_d$ and $G'_d$. By Lemma \ref{lyndonimplies}, all entries of $G_d$ not exceeding $p-1$ have lane number $h_d$, and all entries of $G'_d$ not exceeding $p-1$ have lane number $h_d$. Since the $p$-lanes are identical in $\beta^@_{G_d}$ and $\beta_{G'_d}$ by the inductive hypothesis, entry $p$ has the same lane number in $G_d$ and $G'_d$.

Consider the case that $G'_d$, $G_d$ are identical and $\min G_d \geq p-\mathrm{ht}(z_i)-1$. By the inductive hypothesis, the $l$-lanes are identical in $\beta^@_{G_d}$ and $\beta_{G'_d}$ for all $l \geq p$, so we only need to consider the case $\min G_d \leq p-1$. We now apply Item \ref{headrestr}, with $a' = \mathrm{ht}(z_{i})$. 

If $\min G'_d = p-1$, then entry $p$ of $G'_d$ must have depth one or two, by Lemma \ref{snake}. Suppose that $\min G'_d = p-1$ and entry $p$ has depth one. Then $G'_d$ must lie inside $\beta[j_{p-\min G'_d}]$, and by Item \ref{headrestr} no element of $B_p \cup B_{p-1}$ in $\beta_{G'_d}$ can be lengthened by Procedure \ref{addnoncon}, so the entry $p-1$ has the same lane number in $G_d$ and $G'_d$, and hence the entry $p$ also has the same lane number in $G_d$ and $G'_d$ by the inductive hypothesis. 

Suppose that $\min G'_d = p-1$ and entry $p$ has depth two. Then $\beta_{G'_d}$ must have fewer $(p-1)$-lanes than $p$-lanes. By Claim \ref{(claim2)}, $|B_p-C_{\mathrm{ht}(z_i)}| \geq |B_{p-1}-C_{\mathrm{ht}(z_i)}|$ in $\beta_{G'_d}$. If $|B_{p-1}-C_{\mathrm{ht}(z_i)}| \geq n_i$ in $\beta_{G'_d}$, then Procedure \ref{addnoncon} lengthens $n_i$ elements of $B_p$ and $n_i$ elements of $B_{p-1}$ in $\beta_{G'_d}$, so the number of $(p-1)$-lanes in $\beta^@_{G_d}$ equals that of $(p-1)$-lanes in $\beta_{G'_d}$ and remains less than that of $p$-lanes in $\beta^@_{G_d}$, and hence entry $p$ of $G_d$ has depth two. In the case $|B_{p-1}-C_{\mathrm{ht}(z_i)}| < n_i$ in $\beta_{G'_d}$, we must have $G'_d \in C_{\mathrm{ht}(z_i)}$. If $G'_d \in \widehat{B}_{p-1}$, then no element of $B_p$ in $\beta_{G'_d}$ is lengthened, by Item \ref{headrestr}, so the number of $(p-1)$-lanes does not increase. Suppose $G'_d \in (B_{p-1} \cap C_{\mathrm{ht}(z_i)})-\widehat{B}_{p-1}$. Since each $I \in \widehat{B}_{p-1}$ in $\beta_{G'_d}$ contributes a new $(p-1)$-lane and a new $p$-lane, we can exclude $\widehat{B}_{p-1}$ from consideration. Procedure \ref{addnoncon} fixes the elements of $C_{\mathrm{ht}(z_i)}$, and lengthens all elements of $B_{p-1}-C_{\mathrm{ht}(z_i)}$ in $\beta_{G'_d}$ and some elements of $B_p-C_{\mathrm{ht}(z_i)}$ in $\beta_{G'_d}$, and we have $|(B_{p-1} \cap C_{\mathrm{ht}(z_i)})-\widehat{B}_{p-1}| < |B_{p} \cap C_{\mathrm{ht}(z_i)}|$ in $\beta_{G'_d}$ since $G'_d \in C_{\mathrm{ht}(z_i)}$ lies outside $\beta_{G'_d}$. 
It follows that the number of $(p-1)$-lanes in $\beta^@_{G_d}$ is again less than that of $p$-lanes in $\beta^@_{G_d}$, and hence entry $p$ of $G_d$ has depth two. Thus, in both cases entry $p$ of $G_d$ has depth two, so it has the same lane number in $G_d$ and $G'_d$ by the inductive hypothesis.

Suppose $\min G'_d < p-1$ and all elements of $B_{\min G'_d +1}$ in $\beta_{G'_d}$ belong to $C_{\mathrm{ht}(z_i)}$. Then all elements of $B_{\min G'_d}$ in $\beta_{G'_d}$ must belong to $C_{\mathrm{ht}(z_i)}$ as well, by Claim \ref{(claim1)}. It follows that no element of $B_{\min G'_d +1}$ in $\beta_{G'_d}$ can be lengthened by Procedure \ref{addnoncon}, and no element of $B_{\min G'_d}$ in $\beta_{G'_d}$ can be lengthened by Procedure \ref{addnoncon}, so the number of elements of $B_{\min G'_d}$ is the same in $\beta^@_{G_d}$ and $\beta_{G'_d}$. Thus, the entries $\min G'_d \leq l \leq p-1$ have the same lane number $k_d$ in $G_d$ and $G'_d$ by Lemma \ref{lyndonimplies}, so the entry $p$ also has the same lane number in $G_d$ and $G'_d$ by the inductive hypothesis. 

Suppose $\min G'_d < p-1$ and $\beta_{G'_d}$ contains some element $H \in B_{\min G'_d +1}-C_{\mathrm{ht}(z_i)}$. Let $H^*$ be the element of $B_{\min G'_d +1}$ left of $G'_d$. By Item \ref{headrestr}, $H$ must lie outside $\beta[j_{p-(\min G'_d +1)}] = \beta[j_{p-\min G'_d -1}]$, so $H^*$ must lie outside $\beta[j_{p-\min G'_d -1}]$ as well. Applying the second part of Item \ref{headrestr}, we know that all entries of $H^*$ not exceeding $p$ have the same lane number $d^*$, $H^*$ has entry $p$ of depth $p-(\min G'_d +1)+1 = p-\min G'_d$, and $H^*$ has entry $p-1$ of depth $p-\min G'_d-1$. By Lemma \ref{lyndonimplies}, all entries of $G'_d$ not exceeding $p-1$ have a common lane number $l_1$, and all entries of $G_d$ not exceeding $p-1$ have a common lane number $l_2$. By Lemma \ref{lyndon}, we have $l_1 \leq d^*$, so $l_1 < d^*+1$. By Lemma \ref{snake}, in $G'_d$ the depth of entry $p$ exceeds that of entry $p-1$ by at most one, meaning that entry $p$ of $G'_d$ has depth at most $(p-\min G'_d) +1$ (since $G'_d$ has entry $p-1$ of depth $(p-\min G'_d-1) +1 = p-\min G'_d$ by Lemma \ref{lyndonimplies}). By Claim \ref{(claim1)}, Procedure \ref{addnoncon} lengthens no fewer elements of $B_{\min G'_d +1}$ than elements of $B_{\min G'_d}$ in $\beta_{G'_d}$. Looking at $\beta^@_{G_d}$, this means that $l_2 \geq l_1$, and hence the entry $p$ of $G_d$ has depth not exceeding that of the entry $p$ of $G'_d$, by the inductive hypothesis. On the other hand, we now show that entry $p$ of $G_d$ has depth at least $p-\min G'_d +1$. If $|B_{\min G'_d}-C_{\mathrm{ht}(z_i)}| \geq n_i$ in $\beta_{G'_d}$, then Procedure \ref{addnoncon} lengthens $n_i$ elements of $B_{\min G'_d +1}$ and $n_i$ elements of $B_{\min G'_d}$ in $\beta_{G'_d}$, so we must have $l_2 = l_1 \leq d^*$. If $|B_{\min G'_d}-C_{\mathrm{ht}(z_i)}| < n_i$ in $\beta_{G'_d}$, then $G'_d \in C_{\mathrm{ht}(z_i)}$, $|(B_{\min G'_d} \cap C_{\mathrm{ht}(z_i)})| < |B_{\min G'_d +1} \cap C_{\mathrm{ht}(z_i)}|$ in $\beta_{G'_d}$ since $G'_d$ lies outside $\beta_{G'_d}$, and Procedure \ref{addnoncon} lengthens all elements of $B_{\min G'_d}-C_{\mathrm{ht}(z_i)}$ in $\beta_{G'_d}$ and some elements of $B_{\min G'_d +1}-C_{\mathrm{ht}(z_i)}$ in $\beta_{G'_d}$, so we must have $l_2-1 < d^*$ as well. Thus, in both cases the entry $p$ of $G_d$ has depth exceeding $p-\min G'_d$. This shows that entry $p$ has depth exactly $p-\min G'_d +1$ in both $G'_d$ and $G_d$, so its lane number is the same in both $G'_d$ and $G_d$ by the inductive hypothesis. This completes the proof that the $l$-lanes are identical in $\beta$ and $\beta^@$ for all $l \geq p$.

Since $\beta^@$ satisfies the Lyndon property, $\beta^@$ is a $(p-1)^*$-plateau. We now show that $\beta^@$ is indeed obtained from $\beta$ by lengthening $n_i$ $(p-1)$-lanes of length $\mathrm{ht}(z_i)$. Any initial segment $\beta^@[j]$ contains no fewer lower subintervals with head $l$ than those with head $l-1$, for all $l \leq p-1$. Notice that, compared to $\beta$, $\beta^@$ has the same number of elements of $B_m$ for all $m > p-\mathrm{ht}(z_i)$ and $m < p-\mathrm{ht}(z_i)-1$, but has $n_i$ more elements of $B_{p-\mathrm{ht}(z_i)-1}$ and $n_i$ fewer elements of $B_{p-\mathrm{ht}(z_i)}$. By Lemma \ref{lyndonimplies}, this shows that $\beta^@$ corresponds to the rigged configuration obtained after adding $n_i$ noncontributing boxes beneath $z_i$. It follows that in particular $\beta^@$ is a $(p,\mathrm{ht}(z_{i}),r)$-plateau.

\textit{Proof of Item \ref{headrestr}} \\
Finally, we prove that Item \ref{headrestr} holds for $\beta^@$ for any integer $0 < s' < \min(\mathrm{ht}(z_i),r)$. Since $\beta$ is a $(p-1)^*$-plateau, any lower subinterval containing the head of some $p$-lane must have head $p$ or $p-1$. By Claim \ref{(claim2)}, if $j_1^@ \geq j_1$ is minimal such that $\beta^@[j_1^@]$ contains all $n_i$ added copies of $p-2$, then $\beta^@[j_1^@]$ contains all the lower subintervals with head $p-1$ containing the head of some $p$-lane, and $\beta^@[j_1^@]$ contains no more lower subintervals with head $p-1$ than lower subintervals with head $p-2$; by definition, any $I_1 \in B_{p-1} \cap C_{\mathrm{ht}(z_i)}$ inside $\beta[j_1^@]$ must be paired with an $I_2 \in B_{p-2} \cap C_{\mathrm{ht}(z_i)}$ after $I_1$ in $\beta[j_1^@]$, so that $\beta^@[j_1^@]$ contains both $I_1$, $I_2$. By Claim \ref{(claim1)}, for all $1 < d \leq s'$, if $j_d^@ \geq \max(j_d,j_{d-1}^@)$ is minimal such that $\beta^@[j_d^@]$ contains all $n_i$ added copies of $p-d-1$, then $\beta^@[j_d^@]$ contains no more lower subintervals with head $p-d$ than lower subintervals with head $p-d-1$; by definition, any $J_1 \in B_{p-d} \cap C_{\mathrm{ht}(z_i)}$ inside $\beta[j_d^@]$ must be paired with a $J_2 \in B_{p-d-1} \cap C_{\mathrm{ht}(z_i)}$ after $J_1$ in $\beta[j_d^@]$, so that $\beta^@[j_d^@]$ contains both $J_1$, $J_2$.

Let $I'$ be an element of $B^@_{p-h}$ outside $\beta^@[j_{h}^@]$, where $1 \leq h \leq s'$. Denote by $I^*$ the lower subinterval of $\beta$ corresponding to $I'$. By definition, Procedure \ref{addnoncon} must have added all $n_i$ copies of $p-h$ and all $n_i$ copies of $p-h-1$ to $\beta[j_{h}^@]$, so $I'$ must be identical to $I^*$ (since $I'$ was not obtained by lengthening $I^*$). Since $I^*$ must lie outside $\beta[j_{h}]$, $I^*$ has entry $p$ of depth $p-(p-h)+1 = h+1$, entry $p-1$ of depth $p-1-(p-h)+1 = h$, and one common lane number $i^*$ for all entries not exceeding $p$, by Item \ref{headrestr} for $\beta$. As already shown, $\beta^@$ is a $(p-1)^*$-plateau with identical $l$-lanes to those of $\beta$, for all $l \geq p$. Since $\beta^@[j_{h}^@]$ contains the same number of newly added copies of $p-h$ as newly added copies of $p-h-1$, the elements of $B^@_{p-h}$ in $\beta^@[j_{h}^@]$ must be equinumerous with the elements of $B_{p-h}$ in $\beta[j_{h}^@]$, so $\min I'$ has the same lane number $i^*$ as $\min I^*$. By Lemma \ref{lyndonimplies} for $\beta^@$, all entries of $I'$ not exceeding $p-1$ have lane number $i^*$ and entry $p-1$ has depth $h$. Entry $p$ of $I'$ also has lane number $i^*$ and has depth $h+1$ because $\beta^@_{I'}$ and $\beta_{I^*}$ have identical $p$-lanes.

Lastly, let $\widetilde{I}$ denote the rightmost element of $\widehat{B}_{p-1}$ in $\beta$. Let $C_{a''}^@$ denote the set of deleted elements of $\beta^@$, in the context of applying Procedure \ref{addnoncon} to $\beta^@$, where the smallest entry to be added to $\beta^@$ is $p-a''-1 \geq p-s'-1$. By Item \ref{headrestr} for $\beta$, no element of $B_p$ left of $\widetilde{I}$ can be lengthened by Procedure \ref{addnoncon}. It follows that Procedure \ref{addnoncon} must lengthen the first $n_i$ elements of $B_p - C_{\mathrm{ht}(z_i)}$ after $\widetilde{I}$. Therefore, if $I'' \in B^@_{p-1}$ contains the head of some $p$-lane, then all elements of $B^@_p$ in $\beta^@_{I''}$ must be elements of $C_{a''}^@$; we have $B_{p} \cap C_{\mathrm{ht}(z_i)} \subset B^@_{p} \cap C_{a''}^@$, because $B^@_{p-a''-1} \subset C_{a''}^@$, $B_{p-a''-1} \cap C_{\mathrm{ht}(z_i)} \subset B^@_{p-a''-1}$, $B_{b''} \cap C_{\mathrm{ht}(z_i)} \subset B^@_{b''} \cap C_{a''}^@$ for all $p-a''-1 < b'' \leq p-1$, and $B^@_{p} \subset B_p$. This completes the proof of Item \ref{headrestr}.
\end{proof}

\subsection{Growth Algorithm}
\label{growth}
Let $R' = (\mu_1,\mu_2,\ldots,\mu_n)$ be a ${\cal B}(\infty)$ rigged configuration of type $A_n$. For any $d \in [n]$, label from left to right the stretches of $\mu_d$ by $\tilde{\mu}_{d}^{b_1}, \tilde{\mu}_{d}^{b_2}, \ldots, \tilde{\mu}_{d}^{b_k}$. The following lemma determines how the riggings of $\mu_d$ constrain $\mu_{d-1}$. If $z$ is a stretch of $\mu_j$ intersecting the $l$th column $y_l$ of $\mu_j$, we say that $z$ \textbf{spans} $y_l$. If $y'_l$ is the $l$th column of any other partition, we also say that $z$ \textbf{spans} $y'_l$.
\begin{lem}
\label{rigging desc}
The rigging $r_d^{b_l}$ of the row $\mu_d^{b_l}$ containing $\tilde{\mu}_d^{b_l}$ can be written as \[r_d^{b_l} = \sum_{i=1}^{l}{-\mathrm{cb}(\mu_{d})^{b_i} + \mathrm{acon}_{d}^{b_i}},\]  where $\mathrm{cb}(\mu_{d})^{b_i}$ is the number of contributing boxes in $\tilde{\mu}_d^{b_i}$, and $0 \leq \mathrm{acon}_{d}^{b_i} \leq |\tilde{\mu}_d^{b_i}|$ is the \textbf{above contribution to} $\tilde{\mu}_d^{b_i}$ from $\mu_{d-1}$; to be precise, the above contribution to $\tilde{\mu}_d^{b_i}$ is the number of columns of $\mu_{d-1}$ spanned by the stretch $\tilde{\mu}_d^{b_i}$ that end in a contributing box.
\end{lem}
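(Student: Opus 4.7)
My plan is to apply Lemma \ref{lane rig} directly and then regroup the resulting sums stretch by stretch. Writing $r = \mu_d^{b_l}$, so that $|r| = |\mu_d^{b_l}|$, Lemma \ref{lane rig} gives
\[ r_d^{b_l} = -|V^d_r| + |V^{d-1}_r|, \]
where $V^l_r$ consists of those $l$-lanes $L_i(l)$ with index $i \leq |r|$ that end at a right endpoint. By Lemma \ref{lane col} the lane $L_i(l)$ corresponds to the $i$th column of $\mu_l$, and under this identification a lane ending at a right endpoint is precisely a column whose bottommost box is contributing---that is, a column ending in a contributing box, in the terminology used in the current lemma. Lemma \ref{unif con} Property \ref{single strip} guarantees at most one contributing box per column of $\mu_l$, so $|V^l_r|$ is exactly the number of columns in $\{1, \ldots, |r|\}$ of $\mu_l$ that end in a contributing box.

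The next step is to group those counts by the stretch decomposition of $\mu_d$. Setting $|\mu_d^{b_0}| := 0$, the columns $1, 2, \ldots, |r|$ of $\mu_d$ partition as the disjoint union of the column intervals $[|\mu_d^{b_{i-1}}|+1, |\mu_d^{b_i}|]$ for $i = 1, \ldots, l$, and each of these intervals is precisely the set of columns making up $\tilde{\mu}_d^{b_i}$. Since by Lemma \ref{snake} each contributing box sits at the bottom of its column (and hence in the row of the stretch spanning that column), the number of columns of $\tilde{\mu}_d^{b_i}$ that end in a contributing box is exactly $\mathrm{cb}(\mu_d)^{b_i}$, and summing over $i$ gives $|V^d_r| = \sum_{i=1}^{l} \mathrm{cb}(\mu_d)^{b_i}$. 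I will apply the analogous regrouping for $\mu_{d-1}$: the columns $1, \ldots, |r|$ of $\mu_{d-1}$ are partitioned, using the same index ranges, into those spanned by $\tilde{\mu}_d^{b_1}, \ldots, \tilde{\mu}_d^{b_l}$, and by definition the number of columns in the $i$th group ending in a contributing box is $\mathrm{acon}_d^{b_i}$. Combining yields $|V^{d-1}_r| = \sum_{i=1}^{l} \mathrm{acon}_d^{b_i}$.

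Substituting the two stretch-wise sums into the formula from Lemma \ref{lane rig} produces the claimed identity. The main point requiring care will be the bridge between ``lane ends at a right endpoint'' and ``column ends in a contributing box'': I will need to confirm that every contributing box in $\mu_d$ and in $\mu_{d-1}$ appears as the bottommost box of its column. This is packaged in Lemma \ref{snake} (a contributing box is added as the final action of an $l$-lower subinterval and occupies the bottom of its column at that moment), together with Lemma \ref{unif con} Property \ref{single strip} (no subsequent $l$-lower subinterval returns to the same column). Once this identification is in hand, the rest of the argument is a routine bookkeeping across stretches.
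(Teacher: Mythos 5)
Your proposal is correct and follows essentially the same route as the paper: the paper's proof is exactly "follows from Lemma \ref{lane rig}," identifying $|V^d_r|$ with the contributing boxes in the first $|\mu_d^{b_l}|$ columns of $\mu_d$ and $|V^{d-1}_r|$ with those in the corresponding columns of $\mu_{d-1}$, then splitting both counts stretch by stretch. Your version merely spells out the bookkeeping (the lane-to-column dictionary via Lemma \ref{lane col} and the fact that each contributing box is the bottommost box of its column) that the paper leaves implicit.
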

\begin{proof}
Follows from Lemma \ref{lane rig}. To be precise, the rigging of the row $\mu_d^{b_l}$ is determined by the number of contributing boxes added to the first $|\mu_d^{b_l}|$ columns of the $d$th partition and the number of contributing boxes that will be added to the corresponding columns of the $(d-1)$st partition; the former number corresponds to the sum $\sum_{i=1}^{l}{-\mathrm{cb}(\mu_{d})^{b_i}}$, while the latter number corresponds to the sum $\sum_{i=1}^{l}{\mathrm{acon}_{d}^{b_i}}$.
\end{proof}
\begin{rem}
Notice that fixing the riggings of $\mu_d$ is the same as fixing all $\mathrm{acon}_{d}^{b_i}$. Once we fix $\mathrm{acon}_{d}^{b_i}$ for all $i \leq l$, then the number of columns of $\mu_{d-1}$ spanned by $\tilde{\mu}_d^{b_i}$ ending in a contributing box is completely determined (and must equal $\mathrm{acon}_{d}^{b_i}$), for all $i \leq l$. In what follows, we will always write the rigging of a given row in the form shown in Lemma \ref{rigging desc}.
\end{rem}

Here we give an algorithm for growing all $A_n$ rigged configurations, which can be used to check inductively (starting with the last partition and going backward) whether a given tuple of rigged partitions is a legitimate $A_n$ rigged configuration. Given $\mu_d$ and all $\mathrm{acon}_{d}^{b_i}$ fixed (equivalently, all riggings fixed), this algorithm produces $\mu_{d-1}$ by utilizing Procedures \ref{addnoncon} and \ref{addcon} to add noncontributing boxes and contributing boxes beneath the stretches of $\overline{\mu_d}$, one stretch at a time, from left to right. We will always follow Convention \ref{invisible} on labeling the stretches of $\mu_d$ and the stretches of $\overline{\mu_d}$. By combining Theorem \ref{constraints}, Lemma \ref{max rows}, Lemma \ref{rigging desc}, and Lemma \ref{addboxproof}, we have the following \textbf{growth algorithm} characterizing all $A_n$ rigged configurations:

\begin{thm}
\label{growth alg thm}
Let $\Lambda = (\lambda_1,\lambda_2,\ldots,\lambda_n)$ be a tuple of rigged partitions. Then $\Lambda$ is a ${\cal B}(\infty)$ rigged configuration of $A$-type if and only if $\Lambda$ satisfies the following:
\begin{enumerate}
\item $\lambda_{n}$ must consist of a single row with rigging $r_{n} = -|\lambda_{n}| + \mathrm{acon}_{n}$, where $\mathrm{acon}_{n}$ is an integer $0 \leq \mathrm{acon}_{n} \leq |\lambda_{n}|$.
\item In general, $\lambda_{n-i-1}$ is determined by $\lambda_n, \lambda_{n-1}, \ldots, \lambda_{n-i}$ as follows. Label the stretches of $\lambda_{n-i}$ by $\tilde{\lambda}_{n-i}^{b_1}, \tilde{\lambda}_{n-i}^{b_2}, \ldots, \tilde{\lambda}_{n-i}^{b_k}$, with corresponding rows $\lambda_{n-i}^{b_1},$ $\lambda_{n-i}^{b_2},$ $\ldots,$ $\lambda_{n-i}^{b_k}$ respectively. Write the rigging of row $\lambda_{n-i}^{b_j}$ as $r_{n-i}^{b_j} = \sum_{m=1}^{j}{-\mathrm{cb}(\lambda_{n-i})^{b_m} + \mathrm{acon}_{n-i}^{b_m}}$.

To begin, we have $\lambda_{n-i-1} \supset \overline{\lambda_{n-i}}$. Label the stretches of $\overline{\lambda_{n-i}}$ by $s^{b_1}, s^{b_2}, \ldots, s^{b_{k}}$, where $s^{b_j}$ is simply a copy of $\tilde{\lambda}_{n-i}^{b_j}$. We can now describe $\lambda_{n-i-1}$ by specifying how many boxes $\lambda_{n-i-1}$ can have beneath each stretch $s^{b_j}$ and what the riggings are.

Fix $j \in [k]$, let $\eta^j_1$ and $\eta^j_2$ denote the first row and second row beneath $s^{b_j}$ in $\lambda_{n-i-1}$, respectively. We have $|\eta^j_1| = \mathrm{ncb}(\lambda_{n-i-1})^{b_j} + \mathrm{cb}(\lambda_{n-i-1})_1^{b_j}$ and $|\eta^j_2| = \mathrm{cb}(\lambda_{n-i-1})_2^{b_j}$, where $0 \leq \mathrm{ncb}(\lambda_{n-i-1})^{b_j} \leq |s^{b_j}|$, $0 \leq \mathrm{cb}(\lambda_{n-i-1})_2^{b_j} \leq \mathrm{ncb}(\lambda_{n-i-1})^{b_j}$, $0 \leq \mathrm{cb}(\lambda_{n-i-1})_1^{b_j} \leq |s^{b_j}| - \mathrm{ncb}(\lambda_{n-i-1})^{b_j}$, and $\mathrm{cb}(\lambda_{n-i-1})_2^{b_j} + \mathrm{cb}(\lambda_{n-i-1})_1^{b_j} = \mathrm{acon}_{n-i}^{b_j}$. Let $Y_j := \sum_{u=1}^{j-1}{|s^{b_u}|}$. 

\begin{enumerate}
\item Let $\delta(\lambda_{n-i-1}) := \mathrm{maxr}_{n-i-1} - \max((\overline{\lambda_{n-i}})^t)$. At most $\min(\delta(\lambda_{n-i-1}),2)$ rows can exist beneath $s^{b_1}$ in $\lambda_{n-i-1}$. Any row of length $|\eta^1_2|$ has rigging $r_{n-i-1}^{b_1,2} := -\mathrm{cb}(\lambda_{n-i-1})_2^{b_1}$ $+ \mathrm{acon}_{n-i-1}^{b_1,2}$, any row of length $|\eta^1_1|$ has rigging $r_{n-i-1}^{b_1,1} := r_{n-i-1}^{b_1,2} - \mathrm{cb}(\lambda_{n-i-1})_1^{b_1} + \mathrm{acon}_{n-i-1}^{b_1,1}$, and any row of length $|s^{b_1}|$ has rigging $r_{n-i-1}^{b_1,*} = r_{n-i-1}^{b_1,1} + \mathrm{acon}_{n-i-1}^{b_1,*}$, where $0 \leq \mathrm{acon}_{n-i-1}^{b_1,2} \leq \mathrm{cb}(\lambda_{n-i-1})_2^{b_1}$, $0 \leq \mathrm{acon}_{n-i-1}^{b_1,1} \leq |\eta^1_1| - \mathrm{cb}(\lambda_{n-i-1})_2^{b_1}$, and $0 \leq \mathrm{acon}_{n-i-1}^{b_1,*} \leq \Upsilon(b_1)$ where
$\Upsilon(b_1) =
\left\{
	\begin{array}{ll}
		0  & \mbox{if } \delta(\lambda_{n-i-1})=0 \\
		|s^{b_1}| - |\eta^1_1| & \mbox{otherwise}
	\end{array}
\right.$

\item For any $2 \leq m \leq k$, we now determine the rigging for the stretch $s^{b_m}$, given that we have already done so for $s^{b_1}, s^{b_2}, \ldots, s^{b_{m-1}}$. Let $N(b_{m})$ be the height of the column of $\lambda_{n-i-1}$ on the left of $s^{b_m}$. At most $\min(N(b_{m})-\mathrm{ht}(s^{b_m}),2)$ rows can exist beneath $s^{b_m}$ in $\lambda_{n-i-1}$. Let $U_{b_m}$ denote the uppermost row of $\lambda_{n-i-1}$ with length at most $Y_m$, and let $r(b_m)$ denote the rigging of $U_{b_m}$. There are three cases:
\begin{enumerate}
\item If $|U_{b_m}| < Y_m$ and $|\eta^m_2| \neq 0$, then any row of length $Y_m + |\eta^m_2|$ has rigging $r_{n-i-1}^{b_m,2} = r(b_m) - \mathrm{cb}(\lambda_{n-i-1})_1^{b_{m-1}} - \mathrm{cb}(\lambda_{n-i-1})_2^{b_m} + \mathrm{acon}_{n-i-1}^{b_m,2}$ and any row of length $Y_m + |\eta^m_1|$ has rigging $r_{n-i-1}^{b_m,1} = r_{n-i-1}^{b_m,2} - \mathrm{cb}(\lambda_{n-i-1})_1^{b_m} + \mathrm{acon}_{n-i-1}^{b_m,1}$, where $0 \leq \mathrm{acon}_{n-i-1}^{b_m,2} \leq Y_m + |\eta^m_2| - |U_{b_m}|$ and $0 \leq \mathrm{acon}_{n-i-1}^{b_m,1} \leq |\eta^m_1|-|\eta^m_2|$.
\item If $|U_{b_m}| < Y_m$ and $|\eta^m_2| = 0$, then any row of length $Y_m + |\eta^m_1|$ has rigging $r_{n-i-1}^{b_m,1} = r(b_m) - \mathrm{cb}(\lambda_{n-i-1})_1^{b_m} + \mathrm{acon}_{n-i-1}^{b_m,1}$, where $0 \leq \mathrm{acon}_{n-i-1}^{b_m,1} \leq Y_m + |\eta^m_1| - |U_{b_m}|$.
\item Otherwise, any row of length $Y_m + |\eta^m_2|$ has rigging $r_{n-i-1}^{b_m,2} = r(b_m) - \mathrm{cb}(\lambda_{n-i-1})_2^{b_m}$ $+ \mathrm{acon}_{n-i-1}^{b_m,2}$ and any row of length $Y_m + |\eta^m_1|$ has rigging $r_{n-i-1}^{b_m,1} = r_{n-i-1}^{b_m,2} - \mathrm{cb}(\lambda_{n-i-1})_1^{b_m} + \mathrm{acon}_{n-i-1}^{b_m,1}$, where $0 \leq \mathrm{acon}_{n-i-1}^{b_m,2} \leq \mathrm{cb}(\lambda_{n-i-1})_2^{b_m}$ and $0 \leq \mathrm{acon}_{n-i-1}^{b_m,1} \leq |\eta^m_1|-|\eta^m_2|$.
\end{enumerate}
In all cases, any row of length $Y_m + |s^{b_m}|$ has rigging $r_{n-i-1}^{b_m,*} = r_{n-i-1}^{b_m,1} + \mathrm{acon}_{n-i-1}^{b_m,*}$, where $0 \leq \mathrm{acon}_{n-i-1}^{b_m,*} \leq |s^{b_m}|-|\eta^m_1|$.
\item Finally, to determine the first row $\lambda_{n-i-1}^1$ of $\lambda_{n-i-1}$, we compare $|\eta^k_1|$ with $|\tilde{\lambda}_{n-i}^1|$.
\begin{enumerate}
\item If $|\eta^k_1| < |\tilde{\lambda}_{n-i}^1|$ and $|\lambda_{n-i}^1| > |\lambda_{n-i}^2|$, then $|\lambda_{n-i-1}^1| = |\lambda_{n-i}^2| + |\eta^k_1|$, with rigging $r_{n-i-1}^{b_k,1}$.
\item Otherwise, we have $|\lambda_{n-i-1}^1| = |\lambda_{n-i}^1| + \mathrm{cb}'(\lambda_{n-i-1})^{b_k}$, where $\mathrm{cb}'(\lambda_{n-i-1})^{b_k}$ can be any nonnegative integer. Let $r(\lambda_{n-i-1}^2)$ denote the rigging of $\lambda_{n-i-1}^2$. 
\begin{enumerate}
\item If $|\lambda_{n-i}^1| = |\lambda_{n-i}^2|$, then $\lambda_{n-i-1}^1$ has rigging $r_{n-i-1}^{b_k,!} = r(\lambda_{n-i-1}^2) - \mathrm{cb}'(\lambda_{n-i-1})^{b_k} + \mathrm{acon}_{n-i-1}^{b_k,!}$, where $0 \leq \mathrm{acon}_{n-i-1}^{b_k,!} \leq |\lambda_{n-i-1}^1| - |\lambda_{n-i-1}^2|$.
\item If $|\lambda_{n-i}^1| > |\lambda_{n-i}^2|$, then $\lambda_{n-i-1}^1$ has rigging $r_{n-i-1}^{b_k,!} = r(\lambda_{n-i-1}^2) - \mathrm{cb}(\lambda_{n-i-1})_1^{b_k} - \mathrm{cb}'(\lambda_{n-i-1})^{b_k} + \mathrm{acon}_{n-i-1}^{b_k,!}$, where $0 \leq \mathrm{acon}_{n-i-1}^{b_k,!} \leq |\lambda_{n-i-1}^1| - |\lambda_{n-i-1}^2|$.
\end{enumerate}
\end{enumerate}
\end{enumerate}
\end{enumerate}
\end{thm}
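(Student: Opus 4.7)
The plan is to prove both directions by induction on $i$, using the cascading sequence as the intermediate object that is guaranteed, by Lemma \ref{addboxproof}, to remain a $(n-i-1)^*$-plateau and a $(n-i,q,r)$-plateau after each stage of construction. For the base case $i=0$, the statement is that $\lambda_n$ is a single row. This is immediate from Lemma \ref{max rows} (since $\mathrm{maxr}_n = 1$), and any such $\lambda_n$ is realized by the cascading sequence consisting of $|\lambda_n|$ singleton lower subintervals $(n)$, with rigging $-|\lambda_n| + \mathrm{acon}_n$ accounted for by appending $\mathrm{acon}_n$ singleton lower subintervals $(n-1)$, giving a sequence that is both an $(n-1)^*$-plateau and an $(n,\infty,0)$-plateau by direct inspection.

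For the inductive step, assume that a cascading sequence $\beta$ has been constructed realizing the fixed $(\lambda_n,\ldots,\lambda_{n-i})$ and is both a $(n-i-1)^*$-plateau and a $(n-i,\infty,0)$-plateau. The stretches of $\overline{\lambda_{n-i}}$ (labeled via Convention \ref{invisible}) are precisely the locations beneath which boxes may be added, by Theorem \ref{constraints}. The plan is to process these stretches from bottom to top (i.e.\ by decreasing height): for each stretch $s^{b_j}$, first invoke Procedure \ref{addnoncon} to add $\mathrm{ncb}(\lambda_{n-i-1})^{b_j}$ noncontributing boxes, then invoke Procedure \ref{addcon}(\ref{addcon1}) to add $\mathrm{cb}(\lambda_{n-i-1})_1^{b_j}+\mathrm{cb}(\lambda_{n-i-1})_2^{b_j}$ contributing boxes. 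By Lemma \ref{addboxproof}, after each such step the resulting sequence remains a $(n-i-1)^*$-plateau and is also a $(n-i,\mathrm{ht}(s^{b_j}),\mathrm{ht}(s^{b_j}))$-plateau, which is exactly the hypothesis needed to proceed to the next (lower) stretch. The constraints $0\le \mathrm{ncb}\le|s^{b_j}|$, $0\le \mathrm{cb}_2\le \mathrm{ncb}$, $0\le \mathrm{cb}_1\le|s^{b_j}|-\mathrm{ncb}$ are exactly the ones imposed by the ``at most two boxes per column'' and ``no contributing box precedes a noncontributing box'' clauses of Theorem \ref{constraints}; the constraint $\mathrm{cb}_1+\mathrm{cb}_2 = \mathrm{acon}_{n-i}^{b_j}$ is forced by Lemma \ref{rigging desc}, since the total number of contributing boxes added beneath $\tilde{\lambda}_{n-i}^{b_j}$ must equal the prescribed above-contribution. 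The bound on $\delta(\lambda_{n-i-1})$ and on $N(b_m)-\mathrm{ht}(s^{b_m})$ for each subsequent stretch comes directly from Lemma \ref{max rows} combined with Theorem \ref{constraints}. After all stretches have been processed, the exceptional case Procedure \ref{addcon}(\ref{addcon2}) is invoked to append $\mathrm{cb}'(\lambda_{n-i-1})^{b_k}$ additional contributing boxes to the first row of $\lambda_{n-i-1}$; this handles the two sub-cases depending on whether the top stretch of $\overline{\lambda_{n-i}}$ is ``visible'' or ``invisible'' (i.e.\ whether $|\lambda_{n-i}^1|=|\lambda_{n-i}^2|$ or not).

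The rigging formulas are then a direct bookkeeping application of Lemma \ref{rigging desc} applied to $\lambda_{n-i-1}$: the rigging of any row containing a stretch of $\lambda_{n-i-1}$ is the alternating sum of contributing-box counts from that stretch and all stretches to its left, minus the above-contributions from $\lambda_{n-i-2}$. Each of the cases in the theorem (row of length $|\eta^m_2|$ vs.\ $|\eta^m_1|$ vs.\ $|s^{b_m}|$, and the three sub-cases distinguished by the relationship between $|U_{b_m}|$ and $Y_m$) corresponds to which stretch of $\lambda_{n-i-1}$ that particular row lies in; these are dictated by the shape that results from adding the noncontributing and contributing boxes, and determine in turn how many above-contributions from $\lambda_{n-i-2}$ may legitimately span that row. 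The bounds on each $\mathrm{acon}_{n-i-1}^{b_m,\bullet}$ follow from the fact that above-contributions are counted column by column, and a column can receive at most one contributing box from $\lambda_{n-i-2}$ (again by Theorem \ref{constraints}).

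For necessity, given an arbitrary $A_n$ rigged configuration $R$ with cascading sequence $\alpha$, I would peel off the action of $\alpha$ by integer value, showing that the prefix of $\alpha$ consisting of lower subintervals with head at least $n-i$ realizes $(\lambda_n,\ldots,\lambda_{n-i})$ and is a $(n-i-1)^*$-plateau and a $(n-i,\infty,0)$-plateau (this follows from the definition of cascading sequence together with Lemma \ref{mesa}); then the next block of lower subintervals is precisely what Procedures \ref{addnoncon} and \ref{addcon} would insert. The main obstacle I anticipate is the careful case analysis distinguishing $|U_{b_m}|<Y_m$ vs.\ $|U_{b_m}|=Y_m$ and $|\eta^m_2|=0$ vs.\ $|\eta^m_2|\ne 0$: in these cases the row containing $\tilde{\lambda}_{n-i-1}^{b_m,2}$ or $\tilde{\lambda}_{n-i-1}^{b_m,1}$ may or may not merge with the row on the left, changing which stretch it belongs to and hence the correct starting point $r(b_m)$ for the rigging recursion in Lemma \ref{rigging desc}. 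This is a purely combinatorial case split but the precise accounting has to match the output of the procedures; verifying this case-by-case is where the bulk of the checking lies.
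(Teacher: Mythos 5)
Your proposal is correct and follows essentially the same route as the paper: sufficiency by realizing every shape/rigging allowed by Theorem \ref{constraints} via Procedures \ref{addnoncon} and \ref{addcon} (justified by Lemma \ref{addboxproof}, processing stretches by decreasing height so the plateau parameters stay compatible), necessity from Theorem \ref{constraints}, and the rigging formulas as bookkeeping via Lemma \ref{rigging desc}. The only points where the paper is more explicit than you are: it verifies the row-count bound $\min(\delta(\lambda_{n-i-1}),2)$ by the case split $n-i-1 \gtrless \frac{n+1}{2}$ using Lemma \ref{max cases}, and it applies Procedure \ref{addcon} twice per stretch (once for the second-row contributing boxes beneath the newly added noncontributing boxes, once for the first-row contributing boxes) rather than in a single lumped invocation, plus it carries out the $|U_{b_m}| \lessgtr Y_m$ case analysis you defer.
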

\begin{rem}
Here, $\mathrm{ncb}(\lambda_{n-i-1})^{b_j}$ denotes the number of noncontributing boxes in the first row beneath $s^{b_j}$, $\mathrm{cb}(\lambda_{n-i-1})_1^{b_j}$ denotes the number of contributing boxes in the first row beneath $s^{b_j}$, and $\mathrm{cb}(\lambda_{n-i-1})_2^{b_j}$ denotes the number of contributing boxes in the second row beneath $s^{b_j}$.
\end{rem}
\begin{proof}
Observe that, in short, the theorem states that the full range of boxes allowed under Theorem \ref{constraints} can indeed be added to $\overline{\lambda_{n-i}}$; in other words, the constraints imposed by Theorem \ref{constraints} are tight. Starting with the empty rigged configuration $R_{\empty}$, the construction of $\Lambda$ using Procedures \ref{addnoncon} and \ref{addcon} is described item by item as follows.
\begin{enumerate}
\item By Lemma \ref{addboxproof}, $\lambda_{n}$ can be formed by using Procedure \ref{addcon} to add $|\lambda_{n}|$ contributing boxes to the empty $n$th partition, and the resulting $(n-1)$st partition is $\overline{\lambda_{n}} = \emptyset$.
\item The base case of $\lambda_{n}$ justifies the inductive hypothesis that the $(n-i-1)$st partition is $\overline{\lambda_{n-i}}$ (with riggings of zero) after the partitions $\lambda_n, \lambda_{n-1}, \ldots, \lambda_{n-i}$ have been constructed, in that order, by Procedures \ref{addnoncon} and \ref{addcon}. Lemma \ref{addboxproof} ensures that Procedures \ref{addnoncon} and \ref{addcon} can add all the boxes Theorem \ref{constraints} allows under $s^{b_j}$. To be precise, Procedure \ref{addnoncon} will be used to add the $\mathrm{ncb}(\lambda_{n-i-1})^{b_j}$ noncontributing boxes to the first row beneath $s^{b_j}$, Procedure \ref{addcon} will be used to add the $\mathrm{cb}(\lambda_{n-i-1})_1^{b_j}$ contributing boxes after these $\mathrm{ncb}(\lambda_{n-i-1})^{b_j}$ noncontributing boxes in the same row, and Procedure \ref{addcon} will be used to add the $\mathrm{cb}(\lambda_{n-i-1})_2^{b_j}$ contributing boxes beneath these $\mathrm{ncb}(\lambda_{n-i-1})^{b_j}$ noncontributing boxes. Finally, by Lemma \ref{rigging desc}, a total of $\mathrm{acon}_{n-i}^{b_j}$ contributing boxes must be added beneath $s^{b_j}$ to account for the positive contribution to $\tilde{\lambda}_{n-i}^{b_j}$. In the cases (a) and (b), Theorem \ref{constraints} determines how many rows can be added beneath $s^{b_j}$ and how many contributing boxes can be added beneath the corresponding stretch of the $(n-i-2)$nd partition.
\begin{enumerate}
\item Notice that, for either of the Procedures \ref{addnoncon} and \ref{addcon} to work, we must have $p-\mathrm{ht}(z_1)-1 \geq 1$ or equivalently $(p-1) - \mathrm{ht}(z_1) > 0$. Indeed, by Lemma \ref{lyndonimplies}, $\min((p-1) - \mathrm{ht}(z_1),2)$ is the number of rows that can be added beneath the first stretch by Procedures \ref{addnoncon} and \ref{addcon}. 

By Theorem \ref{constraints}, at most $\min(\delta(\lambda_{n-i-1}),2)$ rows can exist beneath $s^{b_1}$ in $\lambda_{n-i-1}$. To prove the converse, let $H_{l} \leq \mathrm{maxr}_{l}$ denote the number of rows in $\lambda_{l}$. Then $\overline{\lambda_{n-i}}$ has $\overline{H_{n-i-1}} = H_{n-i}-1 \leq \mathrm{maxr}_{n-i}-1$ rows. By above, $\min(n-i-1 - \overline{H_{n-i-1}},2) = \min(n-i - H_{n-i},2)$ is exactly the number of rows allowed to be added beneath $s^{b_1}$ by Procedures \ref{addnoncon} and \ref{addcon}. Consider the cases $n-i-1 > \frac{n+1}{2}$ and $n-i-1 \leq \frac{n+1}{2}$. If $n-i-1 \leq \frac{n+1}{2}$, then $\delta(\lambda_{n-i-1}) = n-i-1 - \overline{H_{n-i-1}}$ since $\mathrm{maxr}_{n-i-1} = n-i-1$. Suppose $n-i-1 > \frac{n+1}{2}$. Then $n-i-1 - \overline{H_{n-i-1}} \geq n-i-1 - (\mathrm{maxr}_{n-i}-1) = n-i-1 - (n-(n-i)+1-1) = n-i-1 -i = n-2i-1 \geq 2$ by Lemma \ref{max cases}, since $i < \frac{n-3}{2}$. Also, $\delta(\lambda_{n-i-1}) = \mathrm{maxr}_{n-i-1} - \overline{H_{n-i-1}} = n-(n-i-1)+1 - \overline{H_{n-i-1}} = i+3 - H_{n-i} \geq i+3 - \mathrm{maxr}_{n-i} = i+3 - (i+1) = 2$ by Lemma \ref{max cases}. It follows that $\min(\delta(\lambda_{n-i-1}),2) = 2 = \min(n-i - H_{n-i},2)$ as well. Thus, in both cases $\min(\delta(\lambda_{n-i-1}),2)$ rows can indeed be added beneath $s^{b_1}$.

Now we prove the riggings formulas. In the case $\delta(\lambda_{n-i-1})=0$, both $\eta^j_1$ and $\eta^j_2$ must be empty. Since $\delta(\lambda_{n-i-1})=0$, no rows can be added beneath the first stretch of the $(n-i-2)$nd partition, which already has $\mathrm{maxr}_{n-i-2}$ rows by Lemma \ref{max rows}, and hence we must have $\mathrm{acon}_{n-i-1}^{b_1,*} = 0$. The claims for the other case is obvious.
\item By Theorem \ref{constraints}, at most $\min(N(b_{m})-\mathrm{ht}(s^{b_m}),2)$ rows can exist beneath $s^{b_m}$ in $\lambda_{n-i-1}$; if more than $N(b_{m})-\mathrm{ht}(s^{b_m})$ rows is added beneath $s^{b_m}$, the result will not be a valid partition. The converse, that $\min(N(b_{m})-\mathrm{ht}(s^{b_m}),2)$ rows can indeed be added beneath $s^{b_m}$, follows from Lemma \ref{addboxproof}.
\begin{enumerate}
\item Suppose $|U_{b_m}| < Y_m$ and $|\eta^m_2| \neq 0$, let $h_1$ denote the bottommost row of length $Y_m + |\eta^m_2|$, and let $h_2$ denote the stretch of $\lambda_{n-i-1}$ contained in $h_1$. Since $h_2$ contains $\mathrm{cb}(\lambda_{n-i-1})_1^{b_{m-1}} + \mathrm{cb}(\lambda_{n-i-1})_2^{b_m}$ contributing boxes and $|h_2| = Y_m + |\eta^m_2| - |U_{b_m}|$, we obtain the formula for $r_{n-i-1}^{b_m,2}$.
\item Suppose $|U_{b_m}| < Y_m$ and $|\eta^m_2| = 0$. Then $\lambda_{n-i-1}$ has no row of length $Y_m + |\eta^m_2|$. Let $g_1$ denote the bottommost row of length $Y_m + |\eta^m_1|$, and let $g_2$ denote the stretch of $\lambda_{n-i-1}$ contained in $g_1$. Since $g_2$ contains $\mathrm{cb}(\lambda_{n-i-1})_1^{b_m}$ contributing boxes and $|g_2| = Y_m + |\eta^m_1| - |U_{b_m}|$, we obtain the formula for $r_{n-i-1}^{b_m,1}$.
\item In the last case, Let $t_1$ denote the bottommost row of length $Y_m + |\eta^m_2|$, and let $t_2$ denote the stretch of $\lambda_{n-i-1}$ contained in $t_1$. Since $t_2$ contains $\mathrm{cb}(\lambda_{n-i-1})_2^{b_m}$ contributing boxes and $|t_2| = \mathrm{cb}(\lambda_{n-i-1})_2^{b_m}$, we obtain the formula for $r_{n-i-1}^{b_m,2}$.
\end{enumerate}
The other riggings are easily expressed in terms of the rigging of the bottommost row beneath $s^{b_m}$.
\item The first row $\lambda_{n-i-1}^1$ of $\lambda_{n-i-1}$ is determined as follows.
\begin{enumerate}
\item If $|\eta^k_1| < |\tilde{\lambda}_{n-i}^1|$ and $|\lambda_{n-i}^1| > |\lambda_{n-i}^2|$, then no more contributing boxes can be added after $\eta^k_1$ in the first row, so $|\lambda_{n-i-1}^1| = |\lambda_{n-i}^2| + |\eta^k_1|$, with rigging $r_{n-i-1}^{b_k,1}$.
\item If $|\eta^k_1| = |\tilde{\lambda}_{n-i}^1|$ or $|\lambda_{n-i}^1| = |\lambda_{n-i}^2|$, then any number $\mathrm{cb}'(\lambda_{n-i-1})^{b_k}$ of contributing boxes can be added after $\eta^k_1$ in the first row without affecting the riggings of $\lambda_{n-i}$. 
\begin{enumerate}
\item Since stretch $\tilde{\lambda}_{n-i-1}^1$ contains $\mathrm{cb}'(\lambda_{n-i-1})^{b_k}$ contributing boxes and has length $\mathrm{cb}'(\lambda_{n-i-1})^{b_k} = |\lambda_{n-i-1}^1| - |\lambda_{n-i-1}^2|$, we obtain the formula for $r_{n-i-1}^{b_k,!}$.
\item Since $|\eta^k_1| = |\tilde{\lambda}_{n-i}^1|$ and stretch $\tilde{\lambda}_{n-i-1}^1$ contains $\mathrm{cb}(\lambda_{n-i-1})_1^{b_k} + \mathrm{cb}'(\lambda_{n-i-1})^{b_k}$ contributing boxes and has length $|\lambda_{n-i-1}^1| - |\lambda_{n-i-1}^2|$, we obtain the formula for $r_{n-i-1}^{b_k,!}$.
\end{enumerate}
\end{enumerate}
\end{enumerate}
\end{enumerate}
\end{proof}

\begin{exm}
\label{test2}
Let $$\lambda_n = {\small\young(\ \ \ )}$$ with rigging $-3 + \mathrm{acon}_n$, and fix $\mathrm{acon}_n = 2$. Then one possible choice of $\lambda_{n-1}$ is $$\lambda_{n-1} = {\small\young(\ \ \ \ \ ,\ \ )},$$ where the bottom row has rigging $-2 + \mathrm{acon}_{n-1}^1$ and the top row has rigging $-4 + \mathrm{acon}_{n-1}^2$. This is obtained by adding two contributing boxes to the second row and two contributing boxes to the end of the first row (whose first three boxes are noncontributing boxes).

Suppose we now fix $\mathrm{acon}_{n-1}^1 = 2$ and $\mathrm{acon}_{n-1}^2 = 4$. Then one possible choice for $\lambda_{n-2}$ is $$\lambda_{n-2} = {\small\young(\ \ \ \ \ \ ,\ \ \ ,\ \ )}$$ where the third row has rigging $-2 + \mathrm{acon}_{n-2}^1$, second row has rigging $-3 + \mathrm{acon}_{n-2}^1 + \mathrm{acon}_{n-2}^2$, first row has rigging $-5 +  \mathrm{acon}_{n-2}^1 + \mathrm{acon}_{n-2}^2 + \mathrm{acon}_{n-2}^3$. This choice for $\lambda_{n-2}$ is constructed by first starting with $\lambda_{n-2}^* = {\small\young(\ \ )}$ (with rigging $0$), then adding two noncontributing boxes to the second row, two contributing boxes to the third row, one noncontributing box to the first row, one contributing box to the second row, one noncontributing box to the first row, two contributing boxes to the first row, in that order.
\end{exm}

\subsection{Determining the Cascading Sequence of a Rigged Configuration}
\label{cs from rigged}
Based on Theorem \ref{growth alg thm}, we now give the algorithm for determining the cascading sequence of a rigged configuration. Assume $\Lambda = (\lambda_1,\lambda_2,\ldots,\lambda_n)$ is a ${\cal B}(\infty)$ rigged configuration of $A$-type.
\begin{thm}
\label{get seq}
The following algorithm constructs the cascading sequence $\alpha$ corresponding to $\Lambda$:
\begin{enumerate}
\item Start with the empty string $\alpha^0$. Add $|\lambda_n|$ copies of lower subintervals $(n)$ to $\alpha^0$, obtaining $\alpha^1$, which accounts for $\lambda_n$.
\item In general, suppose that we have constructed the cascading sequence $\alpha^i$ which accounts for $\lambda_n, \lambda_{n-1}, \ldots, \lambda_{n-i}$. We want to construct $\alpha^{i+1}$ that accounts for $\lambda_n,$ $\lambda_{n-1},$ $\ldots,$ $\lambda_{n-i},$ $\lambda_{n-i-1}$. \\
Label the stretches of $\lambda_{n-i}$ by $\tilde{\lambda}_{n-i}^{b_1},$ $\tilde{\lambda}_{n-i}^{b_2},$ $\ldots,$ $\tilde{\lambda}_{n-i}^{b_k}$, with corresponding rows $\lambda_{n-i}^{b_1},$ $\lambda_{n-i}^{b_2},$ $\ldots,$ $\lambda_{n-i}^{b_k}$ respectively. Write the rigging of row $\lambda_{n-i}^{b_j}$ as $$r_{n-i}^{b_j} = \sum_{m=1}^{j}{-\mathrm{cb}(\lambda_{n-i})^{b_m} + \mathrm{acon}_{n-i}^{b_m}}.$$\\
Label the stretches of $\overline{\lambda_{n-i}}$ by $s^{b_1}, s^{b_2}, \ldots, s^{b_{k}}$. Let $w^{b_1}, w^{b_2}, \ldots, w^{b_{k}}$ denote the stretches of the copy of $\overline{\lambda_{n-i}}$ sitting inside $\lambda_{n-i-1}$.
\begin{enumerate}
\item For $m = 1, 2, \ldots, k$, let $l_m$ denote the number of boxes in the second row beneath $w^{b_m}$ and let $l_m \leq u_m \leq |w^{b_m}|$ denote the number of boxes in the first row beneath $w^{b_m}$. For $m$ ranging through $1, 2, \ldots, k$ in that order, first apply Procedure \ref{addnoncon} to add $l_m + u_m-\mathrm{acon}_{n-i}^{b_m}$ noncontributing boxes beneath $s^{b_m}$, then apply Procedure \ref{addcon} to add $l_m$ contributing boxes beneath these added noncontributing boxes, and finally apply Procedure \ref{addcon} to add $\mathrm{acon}_{n-i}^{b_m}-l_m$ contributing boxes to the first row beneath $s^{b_m}$, updating the cascading sequence (starting from $\alpha^i$) with each application of each procedure.
\item Suppose we have added all the boxes required beneath the stretches of $\overline{\lambda_{n-i}}$. Let $g$ denote the resulting first row. Apply Procedure \ref{addcon} to add $|\lambda_{n-i-1}^1| - |g|$ contributing boxes to $g$, updating the cascading sequence. This completes the construction of $\lambda_{n-i-1}$, and the resulting cascading sequence is the desired $\alpha^{i+1}$.
\end{enumerate}
\end{enumerate}
\end{thm}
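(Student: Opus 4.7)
The plan is to prove Theorem~\ref{get seq} by induction on the stage index $i$, invoking the correctness of Procedures~\ref{addnoncon} and~\ref{addcon} established in Lemma~\ref{addboxproof} together with the growth characterization of Theorem~\ref{growth alg thm}. The inductive hypothesis will state that $\alpha^i$ is an $(n-i-1)^*$-plateau whose corresponding rigged configuration $(\mu_1,\ldots,\mu_n)$ satisfies $\mu_k = \lambda_k$ with the prescribed riggings for $k \geq n-i+1$, while $\mu_{n-i}$ has the prescribed shape and the partial rigging $\sum_{m=1}^{j}{-\mathrm{cb}(\lambda_{n-i})^{b_m}}$ on each row $\mu_{n-i}^{b_j}$; the positive contributions $\mathrm{acon}_{n-i}^{b_m}$ are to be supplied during stage $i+1$.

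For the base case $i=0$: starting from the empty string (a trivial $n^*$-plateau), the algorithm applies Procedure~\ref{addcon}(\ref{addcon2}) with $|\lambda_n|$ singletons $(n)$. By Lemma~\ref{addboxproof}(\ref{addconworks}), the resulting $\alpha^1$ is an $(n-1)^*$-plateau whose rigged configuration has $\lambda_n$ as the only nonempty partition, carrying the partial rigging $-|\lambda_n|$ from its contributing boxes.

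For the inductive step, assume the hypothesis for $i$. By Lemma~\ref{mesa}, the $(n-i-1)$st partition corresponding to $\alpha^i$ equals $\overline{\lambda_{n-i}}$ with zero riggings, so one must build $\lambda_{n-i-1}$ from $\overline{\lambda_{n-i}}$ by adding prescribed boxes beneath each stretch $s^{b_m}$. For each $m = 1,\ldots,k$ in order, the algorithm calls Procedure~\ref{addnoncon}, Procedure~\ref{addcon}(\ref{addcon1}), and Procedure~\ref{addcon}(\ref{addcon1}) with counts $l_m + u_m - \mathrm{acon}_{n-i}^{b_m}$, $l_m$, and $\mathrm{acon}_{n-i}^{b_m} - l_m$. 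Matching against the decomposition of Theorem~\ref{growth alg thm}, these agree with $\mathrm{ncb}(\lambda_{n-i-1})^{b_m}$, $\mathrm{cb}(\lambda_{n-i-1})_2^{b_m}$, and $\mathrm{cb}(\lambda_{n-i-1})_1^{b_m}$ respectively, and they lie in the admissible ranges because $\Lambda$ is assumed to satisfy the growth theorem. By Lemma~\ref{addboxproof}, each successive application preserves the $(n-i-1)^*$-plateau structure and updates the $(p,q,r)$-plateau parameters exactly so that the next application is legal; the decrease of $\mathrm{ht}(s^{b_m})$ as $m$ increases keeps the preconditions $\mathrm{ht}(z_i) < \min(q,r)$ (or $\mathrm{ht}(z_i) < r$) satisfied. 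A final call to Procedure~\ref{addcon}(\ref{addcon2}) appends the singletons $(n-i-1)$ required to extend the first row to $|\lambda_{n-i-1}^1|$, yielding $\alpha^{i+1}$. The freshly created $(n-i-1)$-lanes ending at right endpoints then supply, via Lemma~\ref{lane rig}, precisely the $\mathrm{acon}_{n-i}^{b_m}$ positive contributions needed to finalize the riggings of $\lambda_{n-i}$, verifying the inductive hypothesis for $i+1$.

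The main obstacle is the bookkeeping reconciliation between the triples $(l_m, u_m, \mathrm{acon}_{n-i}^{b_m})$ read off from $\Lambda$ and the triples $(\mathrm{cb}_2, \mathrm{cb}_1, \mathrm{ncb})$ internally produced by Theorem~\ref{growth alg thm}; once the identities $\mathrm{ncb}^{b_m} = l_m + u_m - \mathrm{acon}_{n-i}^{b_m}$, $\mathrm{cb}_2^{b_m} = l_m$, and $\mathrm{cb}_1^{b_m} = \mathrm{acon}_{n-i}^{b_m} - l_m$ are recorded, both the shape verification and the rigging verification reduce to repeated application of Lemma~\ref{rigging desc} at each stage. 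The two subcases of Theorem~\ref{growth alg thm}(2)(c) governing the first-row adjustment collapse into the single final call to Procedure~\ref{addcon}(\ref{addcon2}), since Procedure~\ref{addcon}(\ref{addcon2}) permits any nonnegative count of appended singletons.
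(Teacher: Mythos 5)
Your proposal is correct and follows essentially the same route as the paper: the paper's own proof simply states that the algorithm mirrors the construction in Theorem~\ref{growth alg thm} via Procedures~\ref{addnoncon} and~\ref{addcon} and that the rest is bookkeeping, which is exactly the induction you carry out. Your explicit reconciliation $\mathrm{ncb}^{b_m} = l_m + u_m - \mathrm{acon}_{n-i}^{b_m}$, $\mathrm{cb}_2^{b_m} = l_m$, $\mathrm{cb}_1^{b_m} = \mathrm{acon}_{n-i}^{b_m} - l_m$ is the bookkeeping the paper leaves implicit, and it checks out.
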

\begin{proof}
By assumption, $\Lambda$ is a legitimate rigged configuration. This algorithm works by comparing $\Lambda$ with the rigged configuration corresponding to the cascading sequence constructed so far, seeing what boxes need to be added to construct the next partition of $\Lambda$, and then applying Procedure \ref{addnoncon} and Procedure \ref{addcon} to add the boxes required. The full proof is similar to that of Theorem \ref{growth alg thm}, and is a matter of bookkeeping.
\end{proof}

Now let us look at some examples of how to obtain the cascading sequence given a rigged configuration using the algorithm described above.
\begin{exm}
Consider the following $A_{10}$ rigged configuration $R = (\nu_1,\nu_2,\ldots,\nu_{10})$ (in top-bottom order) where $\nu_{i}$ is the $i$th rigged partition whose $j$th row has rigging $\mathrm{rig}_i^j$: \\
\begin{align*}
&\emptyset\\
\vspace{5 mm}
&\emptyset\\
\vspace{5 mm}
&\emptyset\\
\vspace{5 mm}
&\emptyset\\
\vspace{5 mm}
&{
\begin{array}[t]{r|c|c|c|l}
 \cline{2-4} 0 &\phantom{|}&\phantom{|}&\phantom{|}& 0 \\
 \cline{2-4} 
\end{array}
} \\
\vspace{5 mm}
&{
\begin{array}[t]{r|c|c|c|c|l}
 \cline{2-5} -1 &\phantom{|}&\phantom{|}&\phantom{|}&\phantom{|}& 0 \\
 \cline{2-5} 0 &\phantom{|}&\phantom{|}&\phantom{|}& \multicolumn{2 }{l}{ 0 } \\
 \cline{2-4} 
\end{array}
} \\
\vspace{5 mm}
&{
\begin{array}[t]{r|c|c|c|c|l}
 \cline{2-5} -2 &\phantom{|}&\phantom{|}&\phantom{|}&\phantom{|}& 0 \\
 \cline{2-5} -3 &\phantom{|}&\phantom{|}&\phantom{|}& \multicolumn{2 }{l}{ 0 } \\
 \cline{2-4}  &\phantom{|}&\phantom{|}&\phantom{|}& \multicolumn{2 }{l}{ 0 } \\
 \cline{2-4} 
\end{array}
} \\
\vspace{5 mm}
&{
\begin{array}[t]{r|c|c|c|c|l}
 \cline{2-5} -5 &\phantom{|}&\phantom{|}&\phantom{|}&\phantom{|}& -4 \\
 \cline{2-5}  &\phantom{|}&\phantom{|}&\phantom{|}&\phantom{|}& -4 \\
 \cline{2-5} -3 &\phantom{|}&\phantom{|}&\phantom{|}& \multicolumn{2 }{l}{ -3 } \\
 \cline{2-4} 
\end{array}
} \\
\vspace{5 mm}
&{
\begin{array}[t]{r|c|c|c|c|l}
 \cline{2-5} 1 &\phantom{|}&\phantom{|}&\phantom{|}&\phantom{|}& 1 \\
 \cline{2-5} 0 &\phantom{|}&\phantom{|}&\phantom{|}& \multicolumn{2 }{l}{ 0 } \\
 \cline{2-4} 
\end{array}
} \\
\vspace{5 mm}
&{
\begin{array}[t]{r|c|c|c|c|l}
 \cline{2-5} -1 &\phantom{|}&\phantom{|}&\phantom{|}&\phantom{|}& -1 \\
 \cline{2-5} 
\end{array}
}
\end{align*}

From the viewpoint of its cascading sequence, $R$ is constructed (by the growth algorithm) in the following process (where newly added letters or lower subintervals at each stage are marked with a prime ($'$)): 
\begin{align*}
(10')(10')(10')(10') &\rightarrow^{\textcircled{1}} (9',10)(9',10)(9',10)(9',10) \\ 
&\rightarrow^{\textcircled{2}} (9,10)(9,10)(9,10)(9,10)(8,9)'(8,9)'(8,9)' \\
&\rightarrow^{\textcircled{3}} (8',9,10)(8',9,10)(8',9,10)(9,10)(7',8,9)(7',8,9)(7',8,9) \\ 
&\rightarrow^{\textcircled{4}} (8,9,10)(8,9,10)(8,9,10)(9,10)(7,8,9)(7,8,9)(7,8,9) \\
&\phantom{{}=1}(6,7,8)'(6,7,8)'(6,7,8)' \\ 
&\rightarrow^{\textcircled{5}} (8,9,10)(8,9,10)(8,9,10)(8',9,10)(7,8,9)(7,8,9)(7,8,9) \\
&\phantom{{}=1}(6,7,8)(6,7,8)(6,7,8) \\ 
&\rightarrow^{\textcircled{6}} (8,9,10)(8,9,10)(8,9,10)(8,9,10)(7,8,9)(7,8,9)(7,8,9) \\
&\phantom{{}=1}(6,7,8)(6,7,8)(6,7,8)(7,8)' \\ 
&\rightarrow^{\textcircled{7}} (7',8,9,10)(7',8,9,10)(7',8,9,10)(8,9,10)(6',7,8,9) \\
&\phantom{{}=1}(6',7,8,9)(6',7,8,9)(5',6,7,8)(5',6,7,8)(5',6,7,8)(7,8) \\
&\rightarrow^{\textcircled{8}} (7,8,9,10)(7,8,9,10)(7,8,9,10)(8,9,10)(6,7,8,9) \\
&\phantom{{}=1}(6,7,8,9)(6,7,8,9)(5,6,7,8)(5,6,7,8)(5,6,7,8)(6',7,8)
\end{align*}
\textbf{Explanation of the above process}: We started out by adding four $10$-boxes, which completes Partition 10. Since $\mathrm{rig}_{10}^1 = -1 = -4 + 3$, we first added four noncontributing $9$-boxes in $\textcircled{1}$, and then added three contributing $9$-boxes in $\textcircled{2}$ beneath these noncontributing boxes, which completes Partition 9 and adds three noncontributing $8$-boxes. Since $\mathrm{rig}_9^2 = 0 = -3 + 3$ and $\mathrm{rig}_9^1 = 1 = -3 + 4$, we first added three noncontributing $8$-boxes beneath the first row in  $\textcircled{3}$ (along with three noncontributing $7$-boxes), and then added three contributing $8$-boxes beneath the second row in $\textcircled{4}$ (along with three noncontributing $7$-boxes and three noncontributing $6$-boxes), and then added one noncontributing $8$-box to the first row in $\textcircled{5}$, and then added one contributing $8$-box beneath the first row in $\textcircled{6}$ (along with one noncontributing $7$-box to the first row). This completes Partition 8. Now, Partitions 5-7 all have zero riggings, while the remaining partitions are empty. To complete Partition 7, we added three noncontributing $7$-boxes beneath the second row in $\textcircled{7}$ (along with three noncontributing $6$-boxes to the second row and three noncontributing $5$-boxes to the first row). Finally, we added one noncontributing $6$-box to the first row in $\textcircled{8}$ to complete Partition 6. This gives us the desired rigged configuration.
\end{exm}

\begin{exm}
Consider the following $A_{10}$ rigged configuration $S = (\nu_1,\nu_2,\ldots,\nu_{10})$ (in top-bottom order) where $\nu_{i}$ is the $i$th rigged partition whose $j$th row has rigging $\mathrm{rig}_i^j$: \\
\begin{align*}
&\emptyset\\
\vspace{5 mm}
&\emptyset\\
\vspace{5 mm}
&\emptyset\\
\vspace{5 mm}
&\emptyset\\
\vspace{5 mm}
&{
\begin{array}[t]{r|c|c|c|l}
 \cline{2-4} 0 &\phantom{|}&\phantom{|}&\phantom{|}& 0 \\
 \cline{2-4} 
\end{array}
} \\
\vspace{5 mm}
&{
\begin{array}[t]{r|c|c|c|c|l}
 \cline{2-5} -1 &\phantom{|}&\phantom{|}&\phantom{|}&\phantom{|}& 0 \\
 \cline{2-5} -1 &\phantom{|}&\phantom{|}&\phantom{|}& \multicolumn{2 }{l}{ 0 } \\
 \cline{2-4} 
\end{array}
} \\
\vspace{5 mm}
&{
\begin{array}[t]{r|c|c|c|c|l}
 \cline{2-5} -2 &\phantom{|}&\phantom{|}&\phantom{|}&\phantom{|}& 0 \\
 \cline{2-5}  &\phantom{|}&\phantom{|}&\phantom{|}&\phantom{|}& 0 \\
 \cline{2-5} -2 &\phantom{|}&\phantom{|}& \multicolumn{3 }{l}{ 0 } \\
 \cline{2-3} 
\end{array}
} \\
\vspace{5 mm}
&{
\begin{array}[t]{r|c|c|c|c|l}
 \cline{2-5} -5 &\phantom{|}&\phantom{|}&\phantom{|}&\phantom{|}& -4 \\
 \cline{2-5}  &\phantom{|}&\phantom{|}&\phantom{|}&\phantom{|}& -4 \\
 \cline{2-5} -4 &\phantom{|}&\phantom{|}&\phantom{|}& \multicolumn{2 }{l}{ -3 } \\
 \cline{2-4} 
\end{array}
} \\
\vspace{5 mm}
&{
\begin{array}[t]{r|c|c|c|c|l}
 \cline{2-5} 1 &\phantom{|}&\phantom{|}&\phantom{|}&\phantom{|}& 1 \\
 \cline{2-5} 0 &\phantom{|}&\phantom{|}&\phantom{|}& \multicolumn{2 }{l}{ 0 } \\
 \cline{2-4} 
\end{array}
} \\
\vspace{5 mm}
&{
\begin{array}[t]{r|c|c|c|c|l}
 \cline{2-5} -1 &\phantom{|}&\phantom{|}&\phantom{|}&\phantom{|}& -1 \\
 \cline{2-5} 
\end{array}
}
\end{align*}

From the viewpoint of cascading sequences, $S$ is constructed in the following process:
\begin{align*}
(10')(10')(10')(10') &\rightarrow^{\textcircled{1}} (9',10)(9',10)(9',10)(9',10) \\ 
&\rightarrow^{\textcircled{2}} (9,10)(9,10)(9,10)(9,10)(8,9)'(8,9)'(8,9)' \\ 
&\rightarrow^{\textcircled{3}} (8',9,10)(8',9,10)(8',9,10)(9,10)(7',8,9)(7',8,9)(7',8,9) \\ 
&\rightarrow^{\textcircled{4}} (8,9,10)(8,9,10)(8,9,10)(9,10)(7,8,9)(7,8,9)(7,8,9) \\
&\phantom{{}=1}(6,7,8)'(6,7,8)'(6,7,8)' \\ 
&\rightarrow^{\textcircled{5}} (8,9,10)(8,9,10)(8,9,10)(8',9,10)(7,8,9)(7,8,9)(7,8,9) \\
&\phantom{{}=1}(6,7,8)(6,7,8)(6,7,8) \\ 
&\rightarrow^{\textcircled{6}} (8,9,10)(8,9,10)(8,9,10)(8,9,10)(7,8,9)(7,8,9)(7,8,9) \\
&\phantom{{}=1}(6,7,8)(6,7,8)(6,7,8)(7,8)' \\ 
&\rightarrow^{\textcircled{7}} (7',8,9,10)(7',8,9,10)(8,9,10)(8,9,10)(6',7,8,9)(6',7,8,9) \\
&\phantom{{}=1}(7,8,9)(5',6,7,8)(5',6,7,8)(6,7,8)(7,8) \\ 
&\rightarrow^{\textcircled{8}} (7,8,9,10)(7,8,9,10)(7',8,9,10)(8,9,10)(6,7,8,9)(6,7,8,9) \\
&\phantom{{}=1}(7,8,9)(5,6,7,8)(5,6,7,8)(6,7,8)(6',7,8) \\ 
&\rightarrow^{\textcircled{9}} (6',7,8,9,10)(7,8,9,10)(7,8,9,10)(8,9,10)(6,7,8,9) \\
&\phantom{{}=1}(6,7,8,9)(7,8,9)(5,6,7,8)(5,6,7,8)(5',6,7,8)(6,7,8)
\end{align*}
Explanation of the above process: \\
We started out by adding four $10$-boxes, which completes Partition 10. Since $\mathrm{rig}_{10}^1 = -1 = -4 + 3$, we first added four noncontributing $9$-boxes in $\textcircled{1}$, and then added three contributing $9$-boxes in $\textcircled{2}$ beneath these noncontributing boxes, which completes Partition 9 and adds three noncontributing $8$-boxes. Since $\mathrm{rig}_9^2 = 0 = -3 + 3$ and $\mathrm{rig}_9^1 = 1 = -3 + 4$, we first added three noncontributing $8$-boxes beneath the first row in  $\textcircled{3}$ (along with three noncontributing $7$-boxes), and then added three contributing $8$-boxes beneath the second row in $\textcircled{4}$ (along with three noncontributing $7$-boxes and three noncontributing $6$-boxes), and then added one noncontributing $8$-box to the first row in $\textcircled{5}$, and then added one contributing $8$-box beneath the first row in $\textcircled{6}$ (along with one noncontributing $7$-box to the first row). This completes Partition 8. Since $\mathrm{rig}_8^3 = -3 + 0$ and $\mathrm{rig}_8^1 = \mathrm{rig}_8^2 = -4 +0$, there are no contributing $7$-boxes to add. In $\textcircled{7}$, we added two noncontributing $7$-boxes to the third row. In $\textcircled{8}$, we added a noncontributing $7$-box to the second row. In $\textcircled{9}$, we added a noncontributing $6$-box to the second row. This completes Partition 6, and yields the desired rigged configuration.
\end{exm}

\section{Further Discussions}
One can try to characterize ${\cal B}(\infty)$ rigged configurations in the types $B, C, D, G$, by modifying or extending the methods used in this paper. One can also try to find a non-recursive characterization of ${\cal B}(\infty)$ rigged configurations, which describes the $i$th rigged partition without reference to the $(i+1)$st partition.

    %
    %

%
%

\end{document}